\newtheorem{lemma}{Lemma}
\newtheorem{theorem}{Theorem}
\newtheorem{prop}{Proposition}
\newtheorem{definition}{Definition}
\newtheorem{rem}{Remark}
\newtheorem{corollary}{Corollary}
\renewcommand{\paragraph}[1]{\noindent \textit{#1}}
\numberwithin{equation}{section}
\newcommand{\R}{\mathbb R}
\renewcommand{\I}{\mathcal{I}}
\newcommand{\RR}{\mathbb{R}}
\newcommand{\MM}{\mathcal{M}}
\newcommand{\WW}{\mathcal{W}}
\DeclareMathOperator*{\argmin}{arg\,min}
\title[Modeling knowledge Evolution]{Convergence of knowledge in a cultural evolution model with population structure, random social learning and credibility biases}
\author{Sylvain Billiard}%
\address{\flushleft
  \underline{Sylvain Billiard}\\[.3em]
  Univ. Lille, CNRS, UMR 8198 - Evo-Eco-Paleo, F-59000 Lille, France  \\[.2em]
  Email: \texttt{sylvain.billiard@univ-lille.fr}}
\author{Maxime Derex}
\address{\flushleft
  \underline{Maxime Derex}\\[.3em]
  Institute for Advanced Study in Toulouse, CNRS, UMR 5314\\[.2em]
  F-31015 Toulouse, France\\[.2em]
  Email: \texttt{maxime.derex@iast.fr}}
\author{Ludovic Maisonneuve}
\address{\flushleft
  \underline{Ludovic Maisonneuve}\\[.3em]
  National Museum of Natural History, UMR 7205, Institut de Systématique, Evolution et Biodiversité\\[.2em]
  F-75005 Paris, France\\[.2em]
  Email: \texttt{ludovic.maisonneuve@mnhn.fr}}
\author{Thomas Rey}
\address{\flushleft
  \underline{Thomas Rey}\\[.3em]
  Univ. Lille, CNRS, UMR 8524, Inria – Laboratoire Paul Painlevé\\[.2em]
  F-59000 Lille, France\\[.2em]
  Email: \texttt{thomas.rey@univ-lille.fr}}
\thanks{TR was partially funded by Labex CEMPI (ANR-11-LABX-0007-01) and ANR Project MoHyCon (ANR-17-CE40-0027-01). MD has received funding from the European Union's Horizon 2020 research and innovation programme under Marie Sklodowska-Curie grant agreement number 748310. Support from the ANR-Labex Institute for Advanced Study in Toulouse is acknowledged.}
\begin{document}

\begin{abstract}

    Understanding how knowledge is created and propagates within groups is crucial to explain how human populations have evolved through time. 
    Anthropologists have relied on different theoretical models to address this question. 
    In this work, we introduce a mathematically oriented model that shares properties with individual based approaches, inhomogeneous Markov chains and learning algorithms, such as those introduced in [F. Cucker, S. Smale, \textit{Bull. Amer. Math. Soc}, 39 (1), 2002] and [F. Cucker, S. Smale and D.~X Zhou, \textit{Found. Comput. Math.}, 2004]. 
    After deriving the model, we study some of its mathematical properties, and establish theoretical and quantitative results in a simplified case. 
    Finally, we run numerical simulations to illustrate some properties of the model.
    \\[1em]
    \textsc{Keywords:} Individual based model, inhomogeneous Markov chains, convergence to equilibrium, numerical simulations, concentration inequalities, cultural evolution, language evolution, cumulative culture.\\[.5em]
    \textsc{2010 Mathematics Subject Classification:} 92D25, %Population dynamics (general) 
    68T05, %Learning and adaptive systems
    92H10. % Models of societies, social and urban evolution 
\end{abstract}

\maketitle

%\tableofcontents

\section{Introduction}

    \subsection{On social learning}
    
Computers, spaceships and scientific theories have not been invented by single, isolated individuals. Instead, they result from a collective process in which innovations are gradually added to an existing pool of knowledge, most often over multiple generations \cite{Boyd2011, Richerson2005}. The ability to learn from others (\emph{social learning}) is pivotal to that process because it allows innovations to be shared and be built upon by other individuals. 

This process, termed \emph{cumulative culture}, has been extensively studied by evolutionary anthropologists, both theoretically and experimentally \cite{Henrich2014, Powell2009, Derex2013, Muthukrishna2014}. Most existing theoretical models, however, rely on strong assumptions and omit important aspects of social dynamics. For instance, previous models typically assume that individuals learn from the most skilled member of their social group. Yet, in real life, many reasons can prevent this strategy to come about: individuals might fail to evaluate each other's skills and hierarchical or spatial structures might preclude individuals from accessing to the most useful sources of social information, among others. 

The aim of this work is to develop a more general mathematical model of knowledge evolution by relaxing hypotheses and incorporating more realistic forms of social interaction dynamics, such as those taking place in hierarchically or spatially structured populations.

    \subsection{Outline of the paper}

        In this work, we develop a new mathematical model that aims to describe the dynamics of knowledge creation and propagation among interacting individuals. The model is properly introduced and simple applications are given in Section \ref{sec:modelPres}. 
        In Section \ref{sec:mathRes}, we study some of the mathematical properties of the model, and establish theoretical, quantitative results in a simplified case describing the evolution of knowledge among interacting individuals. 
        {Finally, we develop a numerical method to simulate our model in Section \ref{sec:numRes}. This method allows us to run numerical analyses of the model in cases where we do not have analytical results and to present numerical illustrations of the classical model of \cite{CuckerSmale2004} on the evolution on language, which is contained in our model.}

    \subsection*{Acknowledgments} TR would like to thanks Mylène Maida for useful discussions on the inhomogeneous Markov chain structure of the model. LM would like to thanks Dorian Ni for his feedback on the model.

\section{Presentation of the mathematical model}

\label{sec:modelPres}

In this section, we shall present the model describing the evolution of knowledge within a finite population. Many different definitions of knowledge have been proposed. Here, we consider that knowledge results from conceptualizations that appropriately reflect the structure of the world and model \emph{conceptualization} as functions linking a set of possible experiences to a set of possible concepts. We call these functions \emph{knowledge-like functions}.

Time is supposed discrete. At each time step the knowledge-like function of individuals changes according to a learning dynamic that depends on both social and individual learning. Our model is an extension of the model of Cucker, Smale and Zhou describing the evolution of language \cite{CuckerSmale2004}, and can be seen as an hybrid between a learning algorithm \cite{CuckerSmale2001} and an individual based model \cite{AmbrosioFornasierMorandottiSavare2018, Albi2019vehicular}. %The rate of individual learning during the learning phase is described by a parameter $\tau$.

We suppose that each individual influences each other through a social learning matrix $\Lambda \in \mathcal{M}_N(\RR)$. This matrix depends on both the structure of the population (\textit{e.g.} a professor has a strong impact on its students, while students have less impact on their professor) and the credibility that each individual grants to each other. These influences are described by a \emph{structure matrix} $\Gamma \in \MM_N(\RR)$ and a \emph{credibility matrix} $C \in \MM_N(\RR)$, respectively. Knowledge-like functions also evolve by \emph{individual learning} which is described as a stochastic process that we will detail in the following. The learning algorithm then takes into account \emph{both} social and individual learning.

Let us first start with some useful notations that we shall use in the following:

    \begin{itemize}
        \item The space of square matrices of size $N >0$ with coefficients in $\mathbb K$ will be denoted by $\mathcal M_N(\mathbb K)$.
        
        \item The vector of $\RR^N$ composed of $1$s will be denoted by $\mathbf e$:
        \begin{equation}
            \label{def:eqVector}
            \mathbf e = (1, \ldots, 1)^T \in \RR^N.
        \end{equation}
    
        \item The distance from a function $f$ to a set $\mathcal{X}$ is defined by
        \begin{equation*}
            d(f,\mathcal X) = \inf_{g \in \mathcal{X}} d(f,g).
        \end{equation*}
    
    \end{itemize}
    
\subsection{Modeling Knowledge}

\begin{definition}
A \emph{knowledge setting} $\mathcal{K}$ is a triple $(\mathcal{E},\mathcal{C},\mathcal{F})$ where :
\begin{enumerate}
    \item $\mathcal{E}$ is a closed and bounded subset of $\mathbb{R}^n$.
    \item $\mathcal{C} \subset \mathbb{E}^l$ with $l\in \mathbb{N}^*$, $\mathbb{E}$ an euclidean space, and $0 \in \mathcal{C}$.
    \item $\mathcal{F}$ is a subset of the set of the functions from $\mathcal{E}$ to $\mathcal{C}$.
\end{enumerate}
\end{definition}

The set $\mathcal{E}$ represents all the possible \emph{experiences}, and $\mathcal{C}$ represents all the \emph{concepts} (an illustration is presented in Fig \ref{fig:example_color}). 

\begin{definition}
A knowledge-like function $f \in \mathcal F$ is a function from the  experience set $\mathcal{E}$ to the concept set $\mathcal{C}$.
\end{definition}

Each knowledge-like function represents represents the knowledge of one individual. Let $e$ be in $\mathcal{E}$, when there is a $ c \in \mathcal{C}$ such as $f(e) = c$ and $c \neq 0$, we say that the knowledge-like function \emph{conceptualizes} $e$. We assume that individuals conceptualize all experiences they go through. Elements that are not conceptualized (i.e. not experienced) by individuals are sent to the zero of the set $\mathcal{C}$ by their knowledge-like function.

\medskip
\paragraph{Example. The knowledge-like function associated to colors.}
Let $\mathcal{E}$ be $[0, 1000] \subset \mathbb{R}$ representing the set of wavelengths in nanometers. We remind that $[380, 750]$ is the set of the visible spectrum. An individual associates each element of $\mathcal{E}$ to a color as shown in Figure \ref{fig:example_color}. The set of concepts $\mathcal{C}$ contains the name of the color and $0$. In this case $\mathcal{E}$ is a continuous space and $\mathcal{C}$ is a discrete space.

A knowledge-like function associates a color to each wavelength, or 0 if the individual has not conceptualized this color. For example $f$ defined below is a knowledge-like function.

\begin{equation}
    \label{eq:explejapaneseKnowledge}
    f'(e) =
\left\{
	\begin{array}{ll}
	    \text{purple}  & \mbox{if } e \in [380, 430], \\
	    \text{blue}  & \mbox{if } e \in [430, 520], \\
	    \text{green}  & \mbox{if } e \in [520, 565], \\
	    \text{yellow}  & \mbox{if } e \in [565, 610], \\
		\text{red}  & \mbox{if } e \in [610, 750], \\
		0 & \text{otherwise.}
	\end{array}
\right.
\end{equation}

\begin{figure}[ht]
    \centering
    \includegraphics[width=5cm]{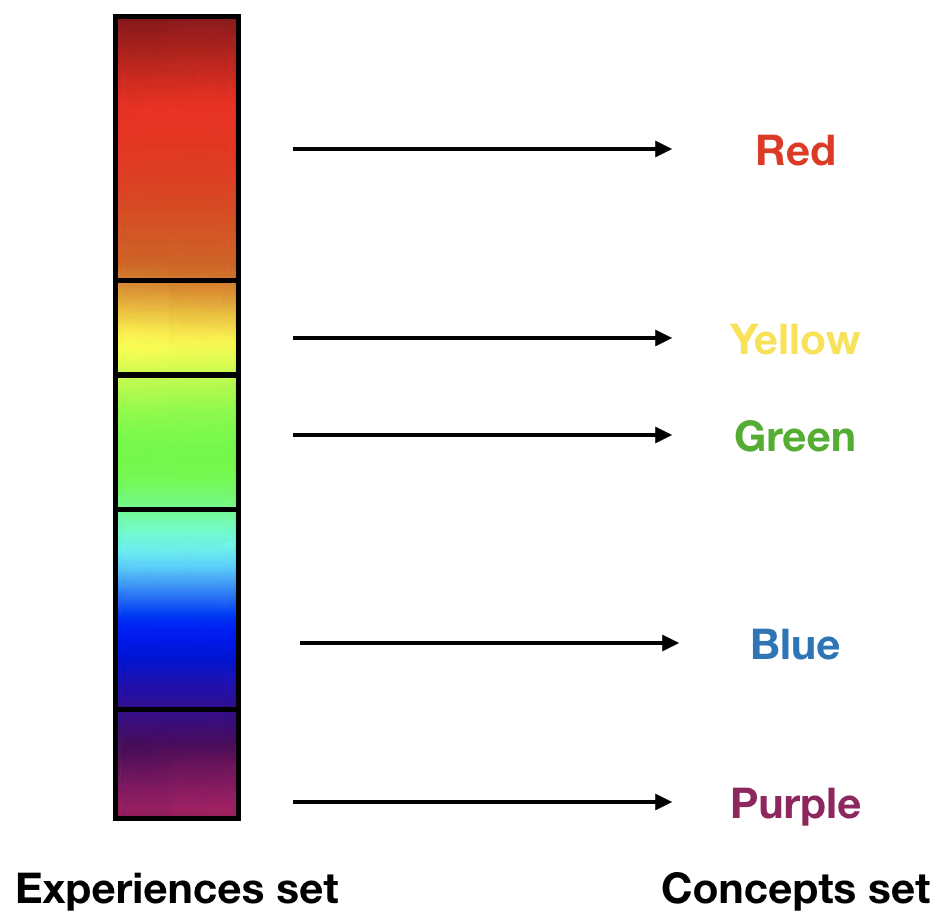}
    \caption{Illustration of a knowledge-like function of the visible spectrum. The arrows point to the color associated with each wavelength and illustrate the knowledge-like function $f$ \eqref{eq:expleKnowledge}.}
    \label{fig:example_color}
\end{figure}

\begin{rem}
    In Japanese the kanji \begin{CJK}{UTF8}{min}青\end{CJK} (\textit{ao}) names both colors green and blue. As in other languages, the Japanese language did not differentiate between these two colors at the beginning of its history. This language could be modeled by the knowledge-like function f':
    \begin{equation}
    \label{eq:expleKnowledge}
    f(e) =
\left\{
	\begin{array}{ll}
	    \text{purple}  & \mbox{if } e \in [380, 430], \\
	    \text{green}  & \mbox{if } e \in [430, 565], \\
	    \text{yellow}  & \mbox{if } e \in [565, 610], \\
		\text{red}  & \mbox{if } e \in [610, 750], \\
		0 & \text{otherwise}.
	\end{array}
\right.
\end{equation}
This knowledge-like function $f'$ is different from $f$ given in \eqref{eq:explejapaneseKnowledge}. Within the same population, individuals can have different knowledge-like functions.
\end{rem}
\subsection{Individual Based Model}

Let $N$ be the number of individuals in the population. Each individual $i$ is associated with a knowledge-like function $k^i \in \mathcal F$.

\begin{definition}
    \label{def:Gamma}
    A \emph{structure matrix} $\Gamma = (\gamma_{ij})_{1\leq i,j \leq N}$ is a square matrix of size $N$ describing the influence that individuals have on each other. More precisely for each $(i,j)\in\{1,...,N\}^2, \gamma_{ij} \in \RR$ describes the strength of the influence of $j$ on $i$. 
\end{definition}

\begin{rem}
  \label{rem:inertia}
For all $i \in \{1,...,N\}$ the greater the $\gamma_{ii}$, the less the individual $i$ will be influenced by others. So $\gamma_{ii}$ can be interpreted as the \emph{inertia} of the individual $i$.
\end{rem}

\paragraph{Examples of structure matrices.}

\begin{itemize}
    \item We consider a population of $N$ individuals structured in age, sorted such that individual 1 is the youngest and $N$ the oldest. It has been shown that older individuals tend to have a higher inertia  \cite{Gopnik7892}, which can be modelled by the condition $\gamma_{11} < ... < \gamma_{NN}$.
    \smallskip
    \item Let us consider the relationship between a parent and her offspring. The offspring learns a lot from her parent but the situation is not symmetric. Let $s \in (0,1)$ describes the influence of a parent on her offspring. We have 
    \begin{equation*}
        \Gamma = \begin{pmatrix}1&0.1\\s&1-s\end{pmatrix}.
    \end{equation*}
    %\smallskip
    \item We consider now the relationship between two students and their professor. Because of her status, the professor has a high influence on her students, while being very  little influenced by them. Assuming that the relationship between the students is symmetric, we have
    \begin{equation*}
        \Gamma = \begin{pmatrix}1&0.1&0.1\\1&0.2&0.2\\1&0.2&0.2\end{pmatrix}.
    \end{equation*}
\end{itemize}

\subsection{Likelihood landscape}

In our model, some conceptualizations (i.e. knowledge-like functions) appropriately reflect the structure of the world, while other do not. For instance, in an environment in which blue berries are safe to eat while green berries are unsafe, color categorizations that discriminate between blue and green are superior because they appropriately capture the structure of the environment. Individuals don't know \emph{a priori} how to categorize their environment. An individual who, by chance, only ever ate blue/safe berries might consider that discriminating between blue and green makes no sense. Yet, an individual who got sick after eating green/unsafe berries is likely to refine her color conceptualization to avoid being sick again. Sometimes, alternative and irreconcilable conceptualizations are equally likely. As an illustration let us consider the shape illustrated in figure \ref{fig:cup_or_faces}. One might consider that it represents (1) two faces (in black) or (2) one cup (in white). Additional observations will not allow individuals to decide whether one conceptualization is more likely than the other.

\begin{figure}
\centering
\includegraphics[width=4cm]{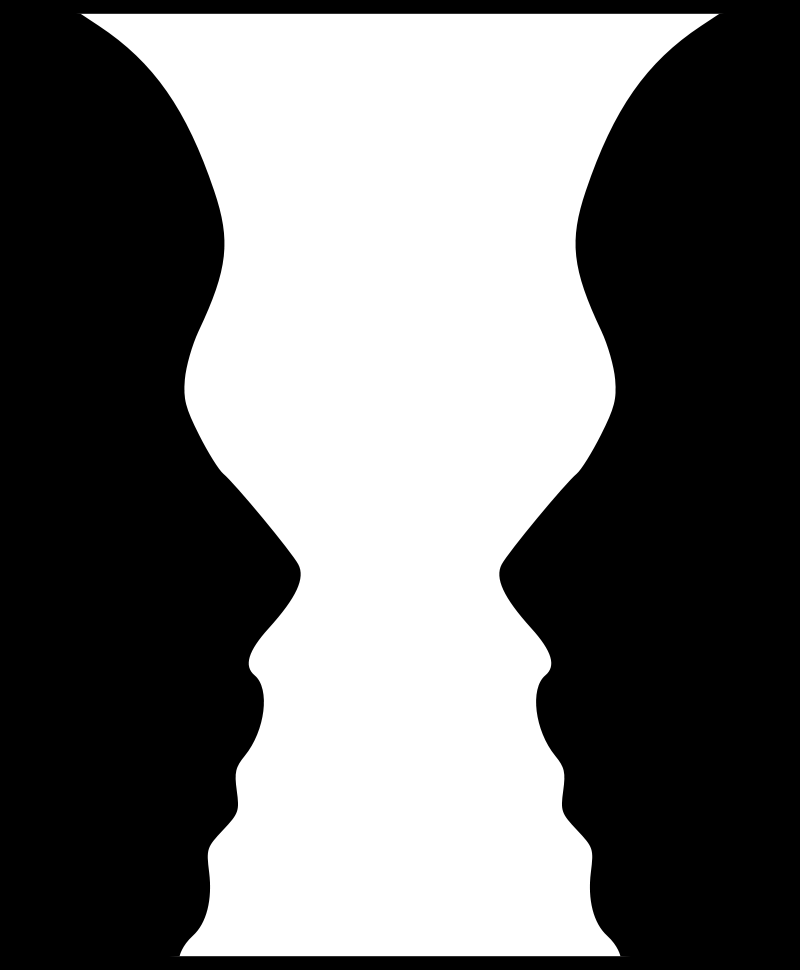}
\caption{On this figure one may see two faces (in black) or a cup (in white). Image courtesy of Bryan Derksen, under license CC BY-SA 3.0}
\label{fig:cup_or_faces}
\end{figure}

In our model, we assume that  individuals evaluate the likelihood of their conceptualization according to their own experience. To do so we define a likelihood landscape as following:

\begin{definition}
  \label{def:likelihoodLandscape}
$\forall c\in \mathcal{C}$, we define a function 
\begin{align*}
    L(\cdot,c): & \, \mathcal{E} \to [0,1],\\
    & \, e \mapsto L(e,c).
\end{align*}
with 
\begin{equation*}
    \forall e \in \mathcal{E}, \ L(e,0) = \frac{1}{2}.
\end{equation*} 

The map $L$ is called the \emph{likelihood landscape}. 
\end{definition}
For all $e$, $c$ in $\mathcal{E} \times \mathcal{C}$, $L(e,c)$ represents how well the concept $c$ explain the experience (or observation) $x$.

\medskip
\paragraph{Examples of likelihood landscapes.}

\begin{itemize}
    \item Let us consider the evolution of two different concepts in a population: flat earth ($F$), and round earth ($R$). Individuals can have experiences where the Earth seems flat ($f$) and others where the Earth seems round ($r$) (seeing a picture of the Earth, a boat vanishing behind the horizon, etc). In this case $\mathcal{E} = \{f,r\}$ and $\mathcal{C} = \{0,F,C\}$. We define the likelihood landscape as :
    \begin{equation*}
        L(e, F) = \left\{ \begin{array}{ll}
		1 & \mbox{if } e = f,\\
		0 & \mbox{if } e = r,\\
	\end{array}
    \right.
    \end{equation*}
    and
    \begin{equation*}
        L(e, R) = \left\{ \begin{array}{ll}
    		1 & \mbox{if } e = f,\\
    		1 & \mbox{if } e = r.\\
    	\end{array}
    \right.
    \end{equation*}
    
    When the earth seems flat ($f$) the earth could be flat or round (because round surfaces can appear flat when observed up close), so both concepts ($F$ and $R$) are likely. However, when the earth seems round only the concept that the earth is round is likely.
    \smallskip
    \item Let us consider again the example of color developed above (Fig. \ref{fig:example_color}), where $E$ is the set of all wavelengths of the visible spectrum and $\mathcal{C}$ is the set of colors. Moreover, let us consider that it does not make sense to discriminate between colors. In that case, we would define the likelihood landscape as $L(e,c) = 1$ for all $(e, c) \in \mathcal{E} \times \mathcal{C} \setminus\{0\}$.
\end{itemize}

\subsection{Credibility}

 In addition to the population structure $\Gamma$ from Definition \ref{def:Gamma}, the influence of individuals on each other also depends on their \emph{credibility}, through the credibility matrix $C$. The level of credibility attributed to an individual by another depends on both the knowledge-like functions, and the likelihood landscape. 
 
 More precisely,  $c_{ij}$ describes the credibility individual $i$ attributes to individual $j$. If the credibility given to individual $j$ by individual $i$ is high relatively to that one attributed to other individuals (including herself), that means that individual $i$ is more prone to adopt individual $j$'s conceptualization.
 We will describe in Section 2.6 how this adoption changes one's knowledge-like function.
We also consider that the credibility $c_{ii}$ that an individual $i$ gives to its own categorization can be affected by her own new experiences. In other words, individuals are able of self-criticism. The lower the self-credibility, the more likely an individual is to be influenced by other individuals (self-credibility directly affects an individual's inertia).

\begin{definition}
 A credibility matrix $C = (c_{ij})_{1 \leq i,j \leq N}$ is a square matrix of size $N$ defined by:
\begin{equation}
    \label{def:credibilityMin}
    c_{ij} = \max(\Tilde{c}_{ij}, c_{\min}),
\end{equation}
where $c_{\min} \geq 0$ is a fixed parameter , and

\begin{equation}
    \label{def:credibilityMat}
    \Tilde{c}_{ij} = \frac{1}{1 + \mathds{1}_{\{i\neq j\}}\int_{c \in k_j(X)}\, \mathrm{d}c}\exp{\left(\int_{e \in E}\ln{\left(L(e,k_j(e))\mathds{1}_{\{k_i(e)\neq0\}} \right)}\, \mathrm{d}e\right)}.
\end{equation}
\end{definition}

\begin{rem}
    The term $\left (1 + \mathds{1}_{\{i\neq j\}}\int_{c \in k_j(X)}\, \mathrm{d}c \right )^{-1}$ penalizes individuals who use on a wider range of concepts, which means that conceptualization that rely on smaller number of concepts are more likely to spread. The second term corresponds to the evaluation of the likelihood of the knowledge-like function of individual $j$ on the experiences experienced by individual $i$. Note that for all $e$, $L(e, 0) = \frac{1}{2}$ (corresponding to cases where individual $j$ has not experienced $e$), which decreases individual's credibility. This means that an individual $i$ considers an individual $j$ less credible if $j$ has not experienced an experience individual $i$ has gone through. 
    
    Finally, the constant $c_{\min}$ corresponds to the minimal credibility: if $c_{\min}>0$, individuals with low credibility can still influence other individuals.
\end {rem}

\begin{rem}
If the set $\mathcal E$ contains a finite number of elements the credibility formula reduces to
\begin{equation*}
    \Tilde{c}_{ij} = \frac{1}{1 + \mathds{1}_{\{i\neq j\}}\int_{c \in k_j(X)}\, \mathrm{d}c}\prod_{e \in E}\mathds{1}_{k_i(e)\neq0} L(e,k_j(e)),
\end{equation*}
namely, the second part of the formula is similar to a measure of likelihood in probability theory \cite{le_gall_2006}. In this formula, associating few experiences with unlikely concepts penalizes credibility a lot.
\end{rem}

\paragraph{Application to the round \emph{vs.} flat earth example.} Let $\mathcal{E} = \{f_1, f_2, f_3, r_1, r_2\}$ and $\mathcal{C} = \{0,F,R\}$, together with $c_{\min} := 0$. For any $i\in\{1,2,3\}, f_i$ are experiences where the Earth as likely to be flat as it is round (e.g. a human watching the horizon), and $r_1, r_2$ are experiences where the Earth is unlikely to be flat and likely to be round. We consider a population of 4 individuals with different knowledge-like functions $k_1, k_2, k_3$ and $k_4$ such as: 

\begin{equation*}
    k_1(e) = \left\{ \begin{array}{ll}
		F & \mbox{if } e = f_1,\\
		0 & \mbox{otherwise},\\
	\end{array}
    \right.\quad k_2(e) = \left\{ \begin{array}{ll}
		F & \mbox{if } e = f_1 \text{ or } e = r_1,\\
		0 & \mbox{otherwise },\\
	\end{array}
    \right.
\end{equation*}

\begin{equation*}
    k_3(e) = \left\{ \begin{array}{ll}
		R & \mbox{if } e = f_1 \text{ or } e = r_1,\\
		0 & \mbox{otherwise},\\
	\end{array}
    \right.\quad k_4(e) = \left\{ \begin{array}{ll}
		F & \mbox{if } e = f_1, \\
		R & \mbox{if } e = r_1, \\
		0 & \mbox{otherwise.}\\
	\end{array}
    \right.
\end{equation*}

We now compute the credibility matrix, 
\[C = \begin{pmatrix}
1&1&1&1\\
\frac{1}{2}&0&1&1\\
\frac{1}{2}&0&1&\frac{1}{2}\\
\frac{1}{2}&0&1&1
\end{pmatrix}.
\]

Let us normalize C such that its lines sum up to $1$, in order to easily read the influences on an individual $j$ in the row $i$:
\[
\title{C} = 
\begin{pmatrix}
\frac{1}{4}&\frac{1}{4}&\frac{1}{4}&\frac{1}{4}\\
\frac{1}{5}&0&\frac{2}{5}&\frac{2}{5}\\
\frac{1}{4}&0&\frac{1}{2}&\frac{1}{4}\\
\frac{1}{5}&0&\frac{2}{5}&\frac{2}{5}
\end{pmatrix}.
\]

Individual \#1 has only experienced $f_1$, she judges the other individuals (and herself) regarding that sole experience. All individuals associate $f_1$ with an appropriate conceptualization. So individual \#1 gives the same credibility to all individuals. Individuals \#2, 3 and 4 all have experienced $f_1$ and $r_1$. They evaluate individual \#1 as less credible than themselves because \#1 has not experienced $r_1$. Individual \#3 and 4 evaluate \#2 as not credible at all because she associates $r_1$ with a concept that is not probable anymore (i.e. the earth is flat while their experience shows it is round). \#2 judges herself not credible because she associates $r_1$ with an unlikely concept. Individual \#3 evaluates \#4 as less credible than herself because \#4 uses several concepts (in our model, less parsimonious conceptualizations are penalized).

\subsection{Social learning}

The \emph{social learning} matrix $\Lambda \in \MM_N(\R)$ represents the influence of individuals on each other. This matrix captures the effect of the structure of the population described in $\Gamma$ and the effect due to credibility $C$.

The influence of individual $j$ on  $i$ depends on the structural influence $\gamma_{ij}$ of $j$ on $i$, and on the credibility $c_{ij}$ that $i$ gives to $j$. We shall assume in this work that these phenomena are \emph{multiplicative}.

\begin{definition}
The social learning matrix $\Lambda = (\lambda_{ij})_{1 \leq i,j \leq N}$ is a square matrix of size $N$ defined by:
\begin{equation}
    \label{def:socialLearningMat}
    \lambda_{ij} = \left\{ \begin{array}{ll}
		\frac{\gamma_{ij}c_{ij}}{\sum_{l=1}^N\gamma_{il}C_{il}} & \mbox{if } \sum_{l=1}^N\gamma_{il}C_{il} \neq 0,\\
		1/N & \mbox{otherwise.}\\
	\end{array}
    \right..
\end{equation}
\end{definition}

\subsection{Dynamics}

Finally, we consider a dynamical, discrete time model: knowledge-like functions evolve over time, altogether with associated quantities such as individuals' credibility. 
Let us denote by $k^t := (k_1^t,...,k_N^t)\in \mathcal{F}^N$ the state of the population at time $t >0$. As time evolves, individuals modify their conceptualization by the learning algorithm presented in \cite{CuckerSmale2001}:
\begin{equation}
    \label{eq:LearningAlg}
    S_i^t \mapsto k_i^{t+1},
\end{equation}
which computes knowledge-like function from a sample
\begin{equation}
    \label{eq:sampling}
    S_i^t = \{(e_1^{i,t},c_1^{i,t}),...,(e_m^{i,t},c_m^{i,t})\}.
\end{equation}
The sampling $S_i^t$ is done using  the probability measure $\rho^{i,t}$ defined  in \eqref{rho}. For each individual $i$, the components of $S_i^t$ represent the influences that will shape the knowledge of $i$ at the next step. The elements of $S_i^t$ can come from social or individual learning. 

\begin{definition}
    Let us denote by $\tau \geq 0$ the \emph{proportion} of {individual learning}, fixed and independent on $i$. The \emph{sampling} measure is defined by
    \begin{equation}
        \rho^{i,t}= (1-\tau) \rho_\Lambda^{i,t} + \tau \rho_\I^{i,t},
        \label{rho}
    \end{equation}
    where $\rho_\Lambda^{i,t}$ and $\rho_\I^{i,t}$ are two probability measures representing the effects of social and individual learning respectively, and defined below.
\end{definition}

%We recall that the notation $\propto$ means proportional to. For example let f and g be two functions, $f \propto g$ means that it exists a constant $A$ such that $f = A g$ %% En fait c'était clair...

\begin{definition}
The probability measure $\rho_\Lambda^{i,t}$ representing \emph{social learning} is defined by:
\begin{equation}
    \label{def:socialLearningMeasure}
    \rho_\Lambda^{i,t}(e,c) \propto \sum_{j=1}^N \lambda^t_{ij}\mathds{1}_{\{k_j(e)=c\}},
\end{equation}
where $\lambda_{ij}^t$ describes the influence of the individual j on the individual i through \ref{def:socialLearningMat} at time $t$.%, and $\propto$ means ``proportional to''.
\end{definition}

According to \eqref{def:socialLearningMeasure}, drawing an element $(e,k_i^t(e))$ using the probability measure $\rho_\Lambda^{i,t}$ is equivalent   to randomly drawing an individual $i$ weighted by the coefficient $(\lambda^t_{ij})_{1 \leq j \leq N}$, and randomly choosing an experience $e$ in $\mathcal{E}$.

\begin{definition}
    The probability measure $\rho_\I^{i,t}$ representing \emph{individual learning} is given by
    \begin{equation}
        \label{def:individualLearningMeasure}
        \left \{ \begin{aligned}
            & \int_{c \in C} \rho_\I^{i,t}(e,c) \, \mathrm{d}c \propto \int_{e' \in E} \mathds{1}_{\{k_i^t(e') \neq 0\}} \mathds{1}_{\{e \in E\}} \exp{-\frac{\|e - e'\|^2}{2 \sigma_E^2}} \, \mathrm{d}e', \\
            & \rho_\I^{i,t}(c|e) \propto \mathds{1}_{\{c\in C\}} \exp{-\frac{\|c - f_i^t(e)\|^2}{2 \sigma_C^2}},
        \end{aligned} \right.
    \end{equation}
    where $\rho(c|e)$ denotes the conditional probability measure on $\mathcal{C}$, defined  for every $(e,c) \in \mathcal E \times \mathcal C$,  and every integrable function $\phi$ by
     \begin{equation*}
        \int_{\mathcal{E}\times \mathcal{C}}\phi(e,c) \, \mathrm{d}\rho = \int_{\mathcal{E}}\left(\int_{\mathcal{C}}\phi(e,c) \, \mathrm{d}\rho(c|e)\right) \, \mathrm{d}\rho_\mathcal E.
    \end{equation*}
    In this last expression, $\rho_\mathcal{E} $ denotes the \emph{marginal} probability measure on $\mathcal{E}$, namely
    \[ 
        \rho_\mathcal{E}(e) := \rho(\pi^{-1}(e)), \quad \forall e\in \mathcal{E},
    \] 
    where $\pi : \mathcal{E} \times \mathcal{C} \to \mathcal{E}$ is the projection on $\mathcal{E}$.
\end{definition}

The individual learning phase is then equivalent for each individual $i$ to draw an element $e'$ experienced by $i$ and to draw an experience $e$ following a normal law centered and concentrated on $e'$. Because of the shape of this probability law, individuals tend to explore the set $\mathcal{E}$ close to the elements they  already explored (namely, innovating). The concept $c$ is drawn following a probability law centered and concentrated on $k_i^t(e)$.

\begin{definition}
    The learning algorithm finally computes the knowledge-like function at the next time step using a least-square procedure \cite{CuckerSmale2001} 
    \begin{equation*}
        k_i^{t+1} \in \argmin_k \sum_{(e,c)\in S_i^t} (k(e)-c)^2.
    \end{equation*}
\end{definition}

%\begin{recall}
%For every $e$ in $E$  $\rho(c|e)$ is the conditional  probability measure on C defined  for every integrable function $\phi$ by

%\begin{equation*}
%    \int_{E\times C}\phi(e,c) \, \mathrm{d}\rho = \int_{E}\left(\int_{C}\phi(e,c) \, \mathrm{d}\rho(c|e)\right) \, \mathrm{d}\rho_\I.
%\end{equation*}
%\end{recall}

\section{The case of globally shared knowledge: convergence without individual learning}
\label{sec:mathRes}

In this section, we are interested in the convergence of the learning dynamics with high probability  to a common shared conceptualization among individuals, i.e. when everybody carries the same knowledge-like function $k$. This result is obtained assuming no individual learning : in all this section, we shall assume that the rate of individual learning $\tau = 0$. The more realistic case where individuals also learn individually is explored below using numerical simulations.

    \medskip
    \paragraph{One step idealistic processes.}
    In order to establish the main result, let us decompose the stochastic process $k^t$ into two other processes that we shall analyze separately. 
    
    \begin{definition}
        Let us define the application $\mathcal{T} : \mathcal{F} \times \mathbb{N} \mapsto \mathcal{F}$ by 
        \begin{equation}
            \label{def:Tcal}
            \mathcal{T}(f,t) = \Lambda^t f.
        \end{equation}
        We can then define the \emph{one step} deterministic \emph{idealistic process} as
        \begin{equation} 
            \label{def:idealProc}
            K^t_{\mathcal{T}} := \mathcal{T}(k^t,t), \quad  K^0_{\mathcal{T}} = k^0 \in \mathcal F^N.
        \end{equation}
    \end{definition}
    
Using these idealistic processes, the time evolution of the knowledge-like function is given by 
\begin{equation}
    \label{eq:decompkt}
    k^t = \Delta k^t + K_{\mathcal{T}}^t,
\end{equation}
where $\Delta k^t = k^t - K_{\mathcal{T}}^t$. 

\begin{definition}
    Let $\mathcal M_\mathcal{F} = \{(k, ...,k), k \in \mathcal{F}\}$ be the space of all the common shared conceptualizations.
\end{definition}

We first prove the contraction of the idealized process $K_{\mathcal{T}}^t$ from \eqref{def:Tcal} in the space $\MM_\mathcal{F}$ using some algebraic properties of primitive matrices, as well as results about inhomogeneous Markov chains. 
Secondly, we shall prove the convergence of the process $\Delta k^t$ with high probability using learning theory. Then, under certain hypothesis (such as $\tau = 0$), we prove the convergence of  $k^t$ towards the set $\MM_\mathcal{F}$ with high probability. 
%We also define the idealistic deterministic process $K^t$ by $K^t = \Lambda^t K^{t-1}$ and $K^0 = k^0$. We also study the condition of convergence to a common shared knowledge of this process.

%\subsection{The Perron-Frobenius Theorem : Weak and Strong form}
%
%In this part we recall the two forms of the Perron-Frobenius theorem. We used the version of the theorem stated in \cite{SerreMatrices}. First let us introduce some notations.
%
%\begin{definition}
%Let $A$ be in $\mathcal{M}_N(\mathbb{R})$.
%\begin{itemize}
%    \item A is said to be positive (denoted $A > 0$) if $\forall i, j \in \{1, ..., N\}, a_{ij} > 0.$
%    \item A is said to be nonnegative (denoted $A \geq 0$) if $\forall i, j \in \{1, ..., N\}, a_{ij} \geq 0.$
%\end{itemize}
%\end{definition}
%
%\begin{definition}
%    If $A \in \mathcal{M}_N(\mathbb{R})$, the spectral radius of A, denoted by $\rho(A)$, is the largest modulus of the eigenvalues of $A$.
%\end{definition}
%
%\begin{theorem}[Perron-Frobenius, weak form]
%Let $A \in \mathcal{M}_N(\mathbb{R})$ be a nonnegative matrix. Then $\rho(A)$ is an eigenvalue of A associated with a nonnegative eigenvector.
%\label{PerronFrobeniusWeak}
%\end{theorem}

\subsection{Primitive matrices and their applications}

{The behavior of processes is mainly driven by the influence matrix $\Lambda$. In this Section, we study the relationship between the properties of the influence matrix and the interactions taking place within the population.}

\begin{definition}
A matrix $A \in \mathcal{M}_N(\mathbb{R})$ is said to be primitive if $A\geq0$ and if $\exists k \in \mathbb{N}^*$ such as $A^k>0$.
\end{definition}

\begin{definition}
\label{def:communicates}
Let $A \in \mathcal{M}_N(\mathbb{R})$. Let $i$,$j$ be in $\{1,...,N\}$, 
\begin{itemize}
    \item We say that $i$ \emph{communicates} with $j$ (denoted $i \xrightarrow{A} j$) if there exists $n\geq0$ and $i_1,...,i_n \in \{1,...,N\}$ such that $$a_{ii_1}\prod_{l=1}^{n-1} (a_{i_{l}i_{l+1}}) a_{i_nj}>0.$$
    If $i$ does not communicate with $j$ we write $i \nrightarrow j$.
    \item We say that $i$ communicates with $j$ with $k$ \emph{intermediates} if there exists $i_1,...,i_k \in \{1,...,N\}$ such that $$a_{ii_1}\prod_{l=1}^{k} (a_{i_{l}i_{l+1}}) a_{i_kj}>0,$$
    and if there exists not $i_1,...,i_{k-1} \in \{1,...,N\}$ such that $$a_{ii_1}\prod_{l=1}^{k-1} (a_{i_{l}i_{l+1}}) a_{i_{k-1}j}>0,$$
    \item Let $I,J \in \mathscr{P}(\{1,...,N\})$. We say that $I \xrightarrow{A} J$ if
    \begin{equation*}
        \exists i,j \in I \times J \text{ such that } i \xrightarrow{A} j.
    \end{equation*}
\end{itemize}
\end{definition}

If $A$ is a primitive matrix of size $N$, for each $i$, $j$ in $\{1,...,N\}, i\rightarrow j$. Moreover if there is $k$ such that $A^k >0$, then for each $i$, $j$ in $\{1,...,N\}$ $i$ communicates with $j$ with at most $k$ intermediates.  

\medskip
\paragraph{Examples.} Let us consider a matrix $A$ defined by
\begin{equation*}
    A = \begin{pmatrix}
    1&1&0&0\\1&1&1&0\\0&1&1&1\\0&0&1&1
    \end{pmatrix}.
\end{equation*}
As we can see on the graph of the matrix $A$ (Fig. \ref{fig:graphprimitive matrix}), every individual communicates with each other with at most 3 intermediates.
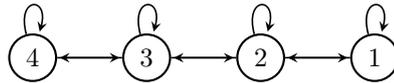
\begin{figure}[ht]
\centering
\begin{tikzpicture}[
            > = stealth, % arrow head style
            shorten > = 1pt, % do not touch arrow head to node
            auto,
            node distance = 1.5cm, % distance between nodes
            semithick % line style
        ]

        \tikzstyle{every state}=[
            draw = black,
            thick,
            fill = white,
            minimum size = 4mm
        ]
        \node[state] (1) {1};
        \node[state] (2) [left of=1] {2};
        \node[state] (3) [left of=2] {3};
        \node[state] (4) [left of=3] {4};

        \path[->] (1) edge [loop above] node {} (1);
        \path[->] (2) edge [loop above] node {} (2);
        \path[->] (3) edge [loop above] node {} (3);
        \path[->] (4) edge [loop above] node {} (4);
        \path[->] (1) edge node {} (2);
        \path[->] (2) edge node {} (1);
        \path[->] (3) edge node {} (2);
        \path[->] (2) edge node {} (3);
        \path[->] (3) edge node {} (4);
        \path[->] (4) edge node {} (3);
    \end{tikzpicture}
    \caption{Graph representing the matrix A.}
    \label{fig:graphprimitive matrix}

\end{figure}
Then, $A$ is a primitive matrix, because
\begin{equation*}
    A^3 = \begin{pmatrix}
    4&5&3&1\\5&7&6&3\\3&6&5&7\\1&3&5&4
    \end{pmatrix} >0.
\end{equation*}

However the converse is not true. Indeed if one considers the matrix 
\begin{equation*}
    B = 
    \begin{pmatrix}
        0&1\\1&0
    \end{pmatrix},
\end{equation*}
then $1\rightarrow2$ and $2\rightarrow1$ (see fig. \ref{fig:graph_non_primitive_matrix}).
\begin{figure}[ht]
\centering
\begin{tikzpicture}[
            > = stealth, % arrow head style
            shorten > = 1pt, % do not touch arrow head to node
            auto,
            node distance = 1.5cm, % distance between nodes
            semithick % line style
        ]

        \tikzstyle{every state}=[
            draw = black,
            thick,
            fill = white,
            minimum size = 4mm
        ]
        \node[state] (1) {1};
        \node[state] (2) [left of=1] {2};
        
        \path[->] (1) edge node {} (2);
        \path[->] (2) edge node {} (1);
        
    \end{tikzpicture}
    \caption{Graph representing the matrix $B$.}
    \label{fig:graph_non_primitive_matrix}
    
\end{figure}
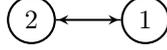
Nevertheless, $B$ is not a primitive matrix because for all $k \in \mathbb{N}$,
\begin{equation*}
    B =
\left\{
	\begin{array}{ll}
		B  & \mbox{if $k$ is odd}, \\
		I_2 & \mbox{if $k$ is even}.
	\end{array}
\right.
\end{equation*}

%\begin{rem}
%    
%\end{rem}

According to Perron-Frobenius theorem, one has
\begin{prop}
Let $A \in \mathcal{M}_N(\mathbb{R})$ be a primitive stochastic matrix. $1$ is an eigenvalues of A, and all the other eigenvalues are less than 1 in modulus.
\label{p<1}
\end{prop}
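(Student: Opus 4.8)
The plan is to deduce the statement from the classical Perron--Frobenius theorem for strictly positive matrices, after two elementary preliminary observations. First, since $A$ is stochastic each of its rows sums to $1$, so $A\mathbf e = \mathbf e$ and $1$ is indeed an eigenvalue (with eigenvector $\mathbf e$). Second, the spectral radius of $A$ equals $1$: if $Ax = \lambda x$ with $x\neq 0$, pick an index $i$ with $|x_i| = \max_j |x_j| > 0$; then $|\lambda|\,|x_i| = \bigl|\sum_j a_{ij} x_j\bigr| \le \sum_j a_{ij}|x_j| \le \sum_j a_{ij}|x_i| = |x_i|$, whence $|\lambda|\le 1$. Combined with the previous point, $\rho(A)=1$.

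Next I would upgrade the primitivity hypothesis to the stronger statement that $A^m > 0$ for \emph{all} $m$ large enough. By definition there is $k\in\NN^*$ with $A^k > 0$; since $A$ is stochastic, every row of $A$ is nonzero, so each row of $A^{k+1} = A\,A^k$ is a nonnegative combination, with at least one strictly positive coefficient, of the (strictly positive) rows of $A^k$, hence is strictly positive. By induction $A^m > 0$ for every $m\ge k$. This is the point that makes the argument work, so I would state it carefully.

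Finally I would invoke Perron--Frobenius in its strong form: a strictly positive matrix has a real eigenvalue equal to its spectral radius, which is simple and strictly larger in modulus than every other eigenvalue. Applying this to $A^k$ and to $A^{k+1}$ — whose spectral radii are both $1$, since by the spectral mapping theorem the eigenvalues of $A^m$ are the $m$-th powers of those of $A$ and the largest of their moduli is $1^m=1$ — the only eigenvalue of modulus $1$ of each of these two matrices is $1$ itself. Now if $\lambda$ is an eigenvalue of $A$ with $|\lambda|=1$, then $\lambda^k$ is an eigenvalue of $A^k$ of modulus $1$, so $\lambda^k = 1$, and likewise $\lambda^{k+1}=1$; dividing gives $\lambda = 1$. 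Therefore every eigenvalue of $A$ other than $1$ has modulus strictly less than $1$, which is the claim. The only real subtlety is using the correct (strict-domination) version of Perron--Frobenius for positive matrices and the propagation step $A^k>0 \Rightarrow A^{k+1}>0$ that lets us compare consecutive powers and thereby eliminate $k$-th roots of unity distinct from $1$; everything else is routine.
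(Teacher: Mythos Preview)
Your proof is correct. The paper does not actually give a proof of this proposition: it simply records the statement as a consequence of the Perron--Frobenius theorem (in its version for primitive matrices) and moves on. Your write-up amounts to a self-contained reduction of the primitive case to the strictly-positive case of Perron--Frobenius, via the observation that $A^k>0$ implies $A^m>0$ for all $m\ge k$, and then the neat trick of comparing $A^k$ and $A^{k+1}$ to rule out nontrivial roots of unity on the spectral circle. This is a genuine (and standard) way to derive the primitive-case statement from the more elementary positive-case Perron theorem; it buys you a more explicit argument at the cost of a few extra lines, whereas the paper is content to invoke the stronger form of Perron--Frobenius as a black box.
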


We can now establish results relating  graphs and eigenvalues of matrices:

\begin{prop}
Let $A$ be a stochastic matrix of size $N$, if 
\begin{equation*}
    \forall i,j \in \{1,...,N\}, i\xrightarrow{A} j \text{ or } j \xrightarrow{A} i,
\end{equation*}
holds then $1$ is an eigenvalue of $A$, and its multiplicity is $1$.
\end{prop}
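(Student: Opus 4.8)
The plan is to observe first that $1$ is automatically an eigenvalue, then to prove that its geometric multiplicity is exactly $1$ using only the stated combinatorial hypothesis, and finally to upgrade this to algebraic multiplicity $1$ by a boundedness argument on the powers of $A$ (an alternative using Perron--Frobenius, Proposition~\ref{p<1}, is sketched at the end).

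\emph{Step 1.} Since $A$ is stochastic, $A\mathbf e = \mathbf e$ with $\mathbf e$ as in \eqref{def:eqVector}, so $1$ is an eigenvalue.

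\emph{Step 2: geometric multiplicity.} Let $v \in \RR^N$ satisfy $Av = v$, and set $S = \{\,i : v_i = \max_\ell v_\ell\,\}$ and $S' = \{\,i : v_i = \min_\ell v_\ell\,\}$; both are nonempty. For $i \in S$ we have $v_i = \sum_j a_{ij} v_j \le \big(\sum_j a_{ij}\big)\max_\ell v_\ell = v_i$, so every inequality is an equality and $v_j = \max_\ell v_\ell$ for every $j$ with $a_{ij} > 0$; chaining this along a path of positive entries shows $S$ is closed under the relation $\xrightarrow{A}$ of Definition~\ref{def:communicates}, i.e. $i \in S$ and $i \xrightarrow{A} j$ imply $j \in S$. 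The same computation with $\min$ in place of $\max$ shows $S'$ is likewise closed under $\xrightarrow{A}$. Now assume $v$ is not constant; then $\max_\ell v_\ell > \min_\ell v_\ell$, hence $S \cap S' = \emptyset$. Choosing $i \in S$ and $i' \in S'$ (so $i \ne i'$), the hypothesis gives $i \xrightarrow{A} i'$ or $i' \xrightarrow{A} i$; the first contradicts the $\xrightarrow{A}$-closedness of $S$ together with $S\cap S'=\emptyset$, the second that of $S'$. Therefore $v$ is constant, so $\ker(A - I) = \RR\,\mathbf e$ is one-dimensional.

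\emph{Step 3: algebraic multiplicity.} Every power $A^k$ is again stochastic (nonnegative with unit row sums), so its maximum row-sum norm satisfies $\|A^k\| = 1$ for all $k \in \NN$, and the sequence $(A^k)_k$ is bounded. If $1$ were a defective eigenvalue then, since its geometric multiplicity is $1$ by Step~2, the Jordan form of $A$ would contain a single block $J_m(1)$ with $m \ge 2$; but the powers $J_m(1)^k$ have a superdiagonal entry equal to $k$, hence are unbounded, and — all norms on $\MM_N(\RR)$ being equivalent in finite dimension — this contradicts the boundedness of $(A^k)_k$. Hence $1$ is semisimple and its algebraic multiplicity equals its geometric multiplicity, namely $1$. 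Alternatively, the hypothesis makes reachability a total order on the communicating classes of $A$, so there is a unique closed class $\mathcal C$; ordering it first writes $A = \left(\begin{smallmatrix} P & 0\\ R & Q\end{smallmatrix}\right)$ with $P$ an irreducible stochastic matrix (eigenvalue $1$ simple, by Perron--Frobenius, cf. Proposition~\ref{p<1}) and $\rho(Q) < 1$ since every transient state reaches $\mathcal C$; then the multiplicity of $1$ in $A$ is $1 + 0 = 1$.

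The only genuinely delicate point is the combinatorial core of Step~2 — that pairwise comparability forces the top and bottom level sets of any $v \in \ker(A-I)$ to meet, hence to exhaust $\{1,\dots,N\}$ — whereas the averaging inequalities and the Jordan-block estimate are routine; I expect no serious obstacle beyond phrasing these carefully.
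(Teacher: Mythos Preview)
Your proof is correct. Step~2 is essentially the paper's argument in a cleaner dress: where the paper sorts the eigenvector, conjugates by a permutation, and reads off a block decomposition of $A$ showing the extreme indices cannot communicate, you argue directly that the max- and min-level sets of any fixed vector are closed under $\xrightarrow{A}$ --- the same convex-combination inequality drives both versions. Your Step~3, however, goes beyond the paper: the paper's proof passes from ``multiplicity bigger than~$1$'' to ``there exists a second eigenvector,'' which is only justified for geometric multiplicity, so strictly speaking it leaves the algebraic multiplicity unaddressed. Your boundedness-of-powers / Jordan-block argument (and the Perron--Frobenius alternative) closes this gap neatly and yields the full statement.
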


\begin{proof}
We suppose that $1$ is not an eigenvalue of multiplicity $1$ of $A$. Since $A$ is a stochastic matrix one has $A \mathbf e = \mathbf e$ with $\mathbf e$ given by \eqref{def:eqVector}, so $1$ is an eigenvalue of $A$. In particular, its order of multiplicity is bigger than 1, and there exists $X\in \mathbb{R}^N\setminus\RR \mathbf e$ such that $A X = X$. % and exist $i,j \in \{1, ...,N\}$ with $i\neq j$ such that $X_i \neq X_j$.

Let P be a permutation matrix such that the coordinates of the vector $PX$ are ranked from the lowest to the highest. Let $A' = P^{T}A P$ and $X' = P^{T}X$. Let $n_l$ and $n_h$ be respectively the number of coordinates equals to the lowest and to the highest coordinates values. We have $n_l \geq 1$, $n_h \geq 1$ and $n_l + n_h \leq N$.

\smallskip
\paragraph{First case : $n_l + n_h = N$}

\begin{align*}
    A X = X & \iff A'X' = X'. \\
    &\implies 
		\forall i \in \{1,...,N\},\sum_{j=1}^N a_{ij}' X_j' = X_i'.\\
	&\implies \left\{ \begin{array}{l}
		\forall i \in \{1,...,n_l\},\forall j \in \{n_l + 1, ..., N\}, a_{ij}' = 0, \\\forall i \in \{n_l+1,...,N\},\forall j \in \{1, ..., n_l\}, a_{ij}' = 0.
	\end{array} \right.\\
	&\implies \exists (B,C) \in \mathcal{M}_{n_l}(\mathbb{R})\times\mathcal{M}_{n_h}(\mathbb{R}),  A = \left (\begin{array}{c|c}
B & 0\\
\hline
0 & C 
\end{array}\right).
\end{align*}

So $1 \nrightarrow N$ et $N \nrightarrow 1$ with the matrix $A'$. Let $i = \sigma^{-1}(1)$ and $j = \sigma^{-1}(N)$, we have $i\nrightarrow j$ and  $j\nrightarrow i$ with the matrix $A$.

\smallskip
\paragraph{Second case : $n_l + n_h < N$}
Let $G_1 = \{1,...,n_l\}$, $G_2 = \{n_l + 1,...,N - n_l\}$, and $G_3 = \{N - n_h + 1,...,N\}$.

\begin{align*}
    A X = X & \iff A'X' = X'.\\
    &\implies 
		\forall i \in \{1,...,N\},\sum_{j=1}^N a_{ij}' X_j' = X_i'.\\
	&\implies \left\{ \begin{array}{l}
		\forall i \in \{1,...,n_l\},\forall j \in \{n_l + 1, ..., N\}, a_{ij}' = 0 ,\\\forall i \in \{N-n_h+1,...,N\},\forall j \in \{1, ..., n_l\}, a_{ij}' = 0.
	\end{array} \right.\\
	&\implies \exists (B,C,D,E,F),  A = \left (\begin{array}{c|c|c}
B & 0 & 0\\
\hline
C & D & E \\
\hline
0 & 0 & F 
\end{array}\right).
\end{align*}
\begin{figure}[ht]
\centering
\begin{tikzpicture}[
            > = stealth, % arrow head style
            shorten > = 1pt, % do not touch arrow head to node
            auto,
            node distance = 1.5cm, % distance between nodes
            semithick % line style
        ]

        \tikzstyle{every state}=[
            draw = black,
            thick,
            fill = white,
            minimum size = 4mm
        ]
        \node[state] (2) {$G_2$};
        \node[state] (1) [below left of=2] {$G_1$};
        \node[state] (3) [below right of=2] {$G_3$};

        \path[->] (1) edge [loop left] node {} (1);
        \path[->] (2) edge [loop above] node {} (2);
        \path[->] (3) edge [loop right] node {} (3);
        \path[->] (1) edge node {} (2);
        \path[->] (3) edge node {} (2);
    \end{tikzpicture}
    \caption{Illustration of the influence relationship between the three clusters $G_1$, $G_2$, and $G_3$}
    \label{fig:graphproof}
    
\end{figure}
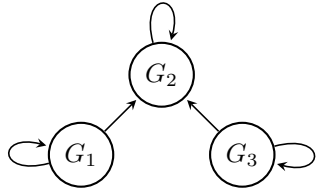
As shown on the figure \ref{fig:graphproof}, $G_1\nrightarrow G_3$ and $G_3\nrightarrow G_1$.

We conclude as before.

\end{proof}

\subsection{Eigenvalues of the matrix of influence $\Lambda$}
{In this Section we study the the quantitative properties of the eigenvalues of the influence matrix, in order to understand the dynamics of the idealized process $K_{\mathcal{T}}^t$.}

Let $\Gamma$ and $C$ be respectively the structure and credibility matrices defined in Def. \ref{def:Gamma} and equation \eqref{def:credibilityMat}.
By construction, $C$ is a stochastic matrix. The structure matrix $\Lambda$ is defined according to \eqref{def:socialLearningMat} by  
\[\lambda_{ij} = \left\{ \begin{array}{ll}
		\frac{\gamma_{ij}c_{ij}}{\sum_{l=1}^N\gamma_{il}C_{il}} & \mbox{if } \sum_{l=1}^N\gamma_{il}C_{il} \neq 0,\\
		1/N & \mbox{otherwise.}\\
	\end{array}
\right.\]

\begin{prop}
 If for all $i,j \in \{1,...,N\}$, one has $i\xrightarrow{\Gamma} j$ or $j \xrightarrow{\Gamma} i$, and the credibility matrix $C > 0$, then  for all $i,j \in \{1,...,N\}$, $i\xrightarrow{\Lambda} j$ or $j \xrightarrow{\Lambda} i$.
\end{prop}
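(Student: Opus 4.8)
The plan is to show that the nonzero pattern of $\Lambda$ dominates that of $\Gamma$, so that communication in $\Gamma$ is preserved in $\Lambda$, and then invoke the previous proposition. First I would observe that by the definition \eqref{def:socialLearningMat}, as long as the denominator $\sum_{l=1}^N \gamma_{il} c_{il}$ is nonzero, one has $\lambda_{ij} > 0$ if and only if $\gamma_{ij} c_{ij} \neq 0$, and since $C > 0$ this is equivalent to $\gamma_{ij} \neq 0$ (assuming, as the structure-matrix examples suggest, that $\gamma_{ij} \geq 0$; the relevant point is simply that $\gamma_{ij} c_{ij} \neq 0 \iff \gamma_{ij} \neq 0$ when $c_{ij} > 0$). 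In the degenerate case where the denominator vanishes, the whole $i$-th row of $\Lambda$ equals $1/N > 0$, so $i \xrightarrow{\Lambda} j$ for every $j$ directly, with zero intermediates, and that row causes no trouble. Hence outside the degenerate rows, $\gamma_{ij} \neq 0 \implies \lambda_{ij} > 0$, i.e. every arc of the graph of $\Gamma$ is an arc of the graph of $\Lambda$.

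Next I would promote this arc-wise domination to path-wise domination: if $i \xrightarrow{\Gamma} j$ via intermediates $i_1, \dots, i_n$, then $\gamma_{i i_1} \gamma_{i_1 i_2}\cdots \gamma_{i_n j} \neq 0$ forces each individual factor to be nonzero, hence each corresponding $\lambda$ is positive (using the previous paragraph for every row involved; degenerate rows only help), and multiplying gives $\lambda_{i i_1}\lambda_{i_1 i_2}\cdots \lambda_{i_n j} > 0$, so $i \xrightarrow{\Lambda} j$. Therefore the hypothesis "for all $i,j$, $i \xrightarrow{\Gamma} j$ or $j \xrightarrow{\Gamma} i$" immediately transfers to $\Lambda$: pick any $i,j$; one of the two $\Gamma$-communications holds, and by what precedes the same $\Lambda$-communication holds.

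The main obstacle I anticipate is bookkeeping around the degenerate branch of \eqref{def:socialLearningMat} and the sign convention on $\Gamma$: the clean statement "$\lambda_{ij} > 0 \iff \gamma_{ij} \neq 0$" is only literally true when $\gamma_{ij} \geq 0$, and one must make sure that a row with a zero denominator — which can occur if $\gamma_{il} = 0$ for all $l$, or if positive and negative entries cancel — does not break the path argument. This is handled by noting that such a row is all-positive in $\Lambda$, so it can only add arcs, never remove them. Modulo this case analysis, the proof is a short pattern-domination argument followed by an application of the preceding proposition, and I would present it in essentially that order: (i) arc domination off degenerate rows, (ii) degenerate rows are all-positive, (iii) path domination, (iv) conclude.
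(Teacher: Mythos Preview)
Your proposal is correct and follows essentially the same approach as the paper: both argue that $\gamma_{ij}>0$ (together with $c_{ij}>0$) forces $\lambda_{ij}>0$ via \eqref{def:socialLearningMat}, so arcs of $\Gamma$ are arcs of $\Lambda$ and hence $\Gamma$-paths become $\Lambda$-paths. Your version is in fact more careful than the paper's two-line proof, since you explicitly handle the degenerate row case and spell out the path-lifting step; one small slip is the opening remark about ``invoke the previous proposition'' --- that proposition (on eigenvalue multiplicity) is not used here, the conclusion is the communication property itself.
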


\begin{proof}

$\forall i,j \in \{1,...,N\}, \gamma_{ij} > 0 \implies \lambda_{ij} > 0$ using \eqref{def:socialLearningMat}.
Thus $\forall i,j \in \{1,...,N\}, i\xrightarrow{\Gamma} j \text{ or } j \xrightarrow{\Gamma}  > 0 \implies i\xrightarrow{\Lambda} j \text{ or } j \xrightarrow{\Lambda} i$.

\end{proof}

\begin{lemma}
If $\Gamma$ is primitive and $c_{\min}>0$, then $\Lambda$ is a primitive matrix.
\label{lemma:primitive}
\end{lemma}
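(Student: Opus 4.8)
I want to show that if $\Gamma$ is primitive and $c_{\min}>0$, then $\Lambda$ is primitive, i.e. $\Lambda\geq 0$ and $\Lambda^k>0$ for some $k$. The first thing to observe is that $\Lambda\geq 0$: since $\Gamma$ is primitive, $\gamma_{ij}\geq 0$, and since $c_{\min}>0$ the credibility matrix satisfies $c_{ij}\geq c_{\min}>0$ for all $i,j$ by \eqref{def:credibilityMin}; hence $\gamma_{ij}c_{ij}\geq 0$, and each row sum $\sum_l \gamma_{il}c_{il}$ is a sum of nonnegative terms. So every entry $\lambda_{ij}$ defined by \eqref{def:socialLearningMat} is nonnegative (in the degenerate branch it equals $1/N>0$). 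Note in passing that $\Lambda$ is stochastic by construction.

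The heart of the argument is to compare the zero-patterns of $\Gamma$ and $\Lambda$. The key claim is: for all $i,j$, one has $\gamma_{ij}>0 \iff \lambda_{ij}>0$, at least for those rows $i$ where $\sum_l\gamma_{il}c_{il}\neq 0$. The forward implication is immediate from the formula: $\gamma_{ij}>0$ and $c_{ij}>0$ force the numerator $\gamma_{ij}c_{ij}>0$, and the denominator is finite and positive, so $\lambda_{ij}>0$. Conversely, if $\gamma_{ij}=0$ then $\lambda_{ij}=\gamma_{ij}c_{ij}/(\cdots)=0$. I should also check the degenerate branch cannot occur under our hypotheses: if $\Gamma$ is primitive then for $k$ large $\Gamma^k>0$, which in particular forces each row of $\Gamma$ to be nonzero (a zero row of $\Gamma$ would give a zero row of every power), so $\sum_l\gamma_{il}>0$; combined with $c_{il}\geq c_{\min}>0$ this gives $\sum_l\gamma_{il}c_{il}\geq c_{\min}\sum_l\gamma_{il}>0$, so the first branch of \eqref{def:socialLearningMat} always applies. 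Therefore $\Lambda$ and $\Gamma$ have exactly the same support (same set of positive entries, same zero entries).

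Having matched the supports, I invoke the combinatorial characterization of primitivity: a nonnegative matrix $A$ is primitive iff the associated directed graph is strongly connected and aperiodic, and more concretely $A^k>0$ iff there is a directed walk of length exactly $k$ from every $i$ to every $j$ in the graph whose edges are $\{(i,j):a_{ij}>0\}$. Since $\Gamma$ and $\Lambda$ have the same support, they have the same associated directed graph; primitivity of $\Gamma$ means this graph satisfies that property for some $k_0$, and hence $\Lambda^{k_0}>0$ as well, so $\Lambda$ is primitive. Concretely: $(\Lambda^{k_0})_{ij} = \sum_{i_1,\dots,i_{k_0-1}}\lambda_{ii_1}\lambda_{i_1i_2}\cdots\lambda_{i_{k_0-1}j}$ is a sum of nonnegative terms, and it is positive as soon as one term is positive, which happens exactly when the corresponding product of $\gamma$'s is positive — which it is for at least one choice of indices because $(\Gamma^{k_0})_{ij}>0$.

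**Main obstacle.** The only genuine subtlety is ruling out the degenerate second branch $\lambda_{ij}=1/N$ in \eqref{def:socialLearningMat}, because that branch breaks the "same support as $\Gamma$" correspondence (it replaces a would-be zero row by an all-positive row). The argument above handles it — primitivity of $\Gamma$ forbids zero rows, and $c_{\min}>0$ then forces the denominator strictly positive — but it is worth writing out carefully, since without $c_{\min}>0$ one could have $\gamma_{ij}>0$ while $c_{ij}=0$ for all $j$ on that row, and then the hypothesis really is needed. Everything else is a direct transfer of the graph-theoretic characterization of primitivity from $\Gamma$ to $\Lambda$.
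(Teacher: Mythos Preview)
Your proof is correct and follows essentially the same approach as the paper: use $c_{\min}>0$ to obtain $\gamma_{ij}>0\Rightarrow\lambda_{ij}>0$, then transfer the positivity of $(\Gamma^{k_0})_{ij}$ to $(\Lambda^{k_0})_{ij}$ via the expansion into products along paths. Your treatment is in fact more careful than the paper's---you explicitly rule out the degenerate branch of \eqref{def:socialLearningMat} and establish full equality of supports---but only the forward implication is actually needed, as the paper's terser argument shows.
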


\begin{proof}
$\Gamma$ is primitive so there exists $k \in \mathbb{N}$ such that $\Gamma^k > 0$. In particular,
\begin{align*}
   \forall i,j \in \{1,...,N\}, \gamma^k_{ij} > 0 \iff&  \forall i,j \in \{1,...,N\}, \\
   &\sum_{0\leq l_1,...,l_{k-1}\leq N}\gamma_{il_1}\gamma_{l_1l_2}...\gamma_{l_{k-1}j} > 0 ,\\
   \iff& \forall i,j \in \{1,...,N\},  \exists l_1,...,l_{k-1} \in \{1,...,N\}, \\&\gamma_{il_1}\gamma_{l_1l_2}...\gamma_{l_{k-1}j} > 0.
\end{align*}
Moreover $C>0$ implies that $\forall i,j \in \{1,...,N\}, \gamma_{ij} > 0$. One can then conclude that $\lambda_{ij} > 0$.
\end{proof}

\begin{lemma}
 Let us assume that $\Gamma > 0$. If $c_{\min}>0$, the social learning matrix $\Lambda$ is \emph{bounded by below}: there exists $\underline m_\lambda > 0 $ such that $\forall i, j \in \{1, ..., N\}$, $\lambda_{ij} \geq \underline m_\lambda$.
\label{lemma:lambda>eps}
\end{lemma}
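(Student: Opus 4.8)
The plan is to get an explicit lower bound on each $\lambda_{ij}$ directly from the definition \eqref{def:socialLearningMat}. Since $\Gamma > 0$, the matrix is componentwise positive, so on a finite index set there exist constants $0 < \underline m_\gamma \le \overline m_\gamma$ with $\underline m_\gamma \le \gamma_{ij} \le \overline m_\gamma$ for all $i,j$; concretely $\underline m_\gamma := \min_{i,j}\gamma_{ij}$ and $\overline m_\gamma := \max_{i,j}\gamma_{ij}$. For the credibility coefficients, the definition \eqref{def:credibilityMin} gives $c_{ij} = \max(\tilde c_{ij}, c_{\min}) \ge c_{\min} > 0$ for every $i,j$, and from \eqref{def:credibilityMat} one has $\tilde c_{ij}\le 1$ (the prefactor is at most $1$ and the exponential has a nonpositive argument since $L\le 1$ implies $\ln L \le 0$ on a set of finite measure — here I would invoke the standing assumption that $\mathcal E$ is bounded so $\int_{\mathcal E}\dD e < \infty$), hence $c_{\min} \le c_{ij} \le 1$.

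Now I plug these into \eqref{def:socialLearningMat}. Because $\Gamma>0$ and $c_{ij}\ge c_{\min}>0$, the denominator satisfies $\sum_{l=1}^N \gamma_{il}c_{il} \ge N \underline m_\gamma c_{\min} > 0$, so the first branch of the definition always applies and we never fall into the $1/N$ case. On the other hand the numerator is bounded above by $\gamma_{ij}c_{ij}\le \overline m_\gamma$, wait — I need a lower bound on the numerator and an upper bound on the denominator: the numerator is $\gamma_{ij}c_{ij}\ge \underline m_\gamma c_{\min}$, and the denominator is $\sum_{l=1}^N\gamma_{il}c_{il}\le N\,\overline m_\gamma$ (using $c_{il}\le 1$). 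Therefore
\begin{equation*}
    \lambda_{ij} = \frac{\gamma_{ij}c_{ij}}{\sum_{l=1}^N \gamma_{il}c_{il}} \ge \frac{\underline m_\gamma\, c_{\min}}{N\,\overline m_\gamma} =: \underline m_\lambda > 0,
\end{equation*}
which is the desired bound, uniform in $i,j$.

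There is no real obstacle here; the statement is essentially a bookkeeping exercise once the two ingredients — strict positivity and finiteness of $\Gamma$ on a finite index set, and the floor $c_{\min}>0$ on credibilities — are in place. The only point requiring a word of care is justifying $\tilde c_{ij}\le 1$, i.e. that the integral $\int_{\mathcal E}\ln(L(e,k_j(e))\mathds 1_{\{k_i(e)\neq 0\}})\,\dD e$ is $\le 0$ (possibly $-\infty$, in which case $\tilde c_{ij}=0\le 1$ trivially); this follows from $L\le 1$ together with $\mathds 1_{\{k_i(e)\neq 0\}}\le 1$, so that the integrand is nonpositive wherever it is defined, and from $\mathcal E$ being bounded so that the positive part contributes nothing. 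In fact, for the lower bound on $\lambda_{ij}$ one does not even strictly need $c_{ij}\le 1$: it suffices to bound the denominator using $c_{il}=\max(\tilde c_{il},c_{\min})$ and the elementary estimate $\tilde c_{il}\le 1$; alternatively one can absorb any uniform upper bound $\overline m_c$ on the $c_{il}$ into the constant, giving $\underline m_\lambda = \underline m_\gamma c_{\min}/(N\overline m_\gamma\overline m_c)$. Either way the conclusion stands.
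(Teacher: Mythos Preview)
Your proof is correct and follows the same strategy as the paper: bound the numerator of \eqref{def:socialLearningMat} below by $\underline m_\gamma c_{\min}$ and the denominator above. Your denominator bound $N\overline m_\gamma$ (with $\overline m_\gamma = \max_{i,j}\gamma_{ij}$ and $c_{il}\le 1$) is in fact cleaner and more transparent than the paper's choice $N(1-N\underline m_\gamma)$, which implicitly assumes additional structure on $\Gamma$; your version works under the stated hypotheses without caveat.
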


\begin{proof}
Let us set 
$\underline{m}_\gamma = \min_{i,j} \gamma_{ij}$.
Since $\gamma_{ij} > \underline m_\gamma$ for all $i,j \in \{1,...,N\}$, one has that
\[\lambda_{ij} > \mathds{1}_{\sum_{l=0}^N\gamma_{il}C_{il} = 0} \frac{1}{N} + \mathds{1}_{\sum_{l=0}^N\gamma_{il}C_{il} \neq 0} 	\frac{\underline m_\gamma c_{\min}}{N(1 - N\underline m_\gamma)}.\]
It is then enough to choose
\begin{equation*}
\underline m_\lambda= \min \left(\frac{1}{N},\frac{\underline m_\gamma c_{\min}}{N(1 - N\underline m_\gamma)}\right).
\end{equation*}
\end{proof}

\begin{lemma}
\label{lemma:upperBound}
Let $\Lambda$ be a stochastic matrix of size N that is bounded by below by $\underline m_\lambda$ as in Lemma \ref{lemma:lambda>eps}.
Let us consider the sorted collection $(\alpha_i)_{i=1\ldots N}$ of its eigenvalues:
\[\alpha_1 = 1 > |\alpha_2| \geq ... \geq |\alpha_N|.\] 
Then there exists a universal constant $0< \overline M_\lambda < 1$ such that 
\[\forall i \in \{2,...,N\}, |\alpha_i| \leq 1 - \overline M_\lambda.\] 
\label{alpha2<}
\end{lemma}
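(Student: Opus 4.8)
The plan is to get a \emph{quantitative} gap below the spectral circle from the lower bound $\lambda_{ij} \geq \underline m_\lambda$, using the classical Hopf/Dobrushin contraction estimate for stochastic matrices. First I would recall that for a stochastic matrix $\Lambda$ all of whose entries are at least $\underline m_\lambda > 0$, the Dobrushin ergodic coefficient satisfies
\begin{equation*}
    \delta(\Lambda) := \frac{1}{2}\max_{i,k} \sum_{j=1}^N |\lambda_{ij} - \lambda_{kj}| \leq 1 - N\,\underline m_\lambda,
\end{equation*}
since each pair of rows agrees on a mass of at least $N\underline m_\lambda$ (every column contributes at least $\underline m_\lambda$ to the common part). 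Note $N\underline m_\lambda \leq 1$ automatically, so this is a genuine bound in $[0,1)$; and if $N\underline m_\lambda = 1$ then $\Lambda$ is rank one with $\alpha_2 = \cdots = \alpha_N = 0$ and there is nothing to prove, so assume $N\underline m_\lambda < 1$.

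Next I would use the standard fact that $\delta(\Lambda)$ controls the action of $\Lambda$ on the hyperplane $\mathbf{e}^\perp$ (equivalently, on mean-zero vectors) in $\ell^1$: for any $v \in \RR^N$ with $\sum_i v_i = 0$ one has $\|\Lambda^T v\|_1 \leq \delta(\Lambda)\,\|v\|_1$, or dually, for any $w$ with $w - \bar w\mathbf e$ its mean-zero part, $\mathrm{osc}(\Lambda w) \leq \delta(\Lambda)\,\mathrm{osc}(w)$ where $\mathrm{osc}(w) = \max_i w_i - \min_i w_i$. Now let $\alpha$ be any eigenvalue of $\Lambda$ with eigenvector $x$; if $\alpha \neq 1$ then by Proposition \ref{p<1}-type reasoning (orthogonality of left/right eigenspaces for distinct eigenvalues, using $\mathbf e$ as the Perron eigenvector of $\Lambda^T$ — more precisely the left eigenvector $\pi$ of $\Lambda$ satisfies $\pi \cdot x = 0$) one reduces to the invariant complement; applying the contraction estimate on that complement gives $|\alpha|\,\mathrm{osc}(x) = \mathrm{osc}(\alpha x) = \mathrm{osc}(\Lambda x) \leq \delta(\Lambda)\,\mathrm{osc}(x)$, and since $x$ is not proportional to $\mathbf e$ we have $\mathrm{osc}(x) > 0$, hence $|\alpha| \leq \delta(\Lambda) \leq 1 - N\underline m_\lambda$. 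So one may take $\overline M_\lambda = N\underline m_\lambda$ — though to match the claim that $\overline M_\lambda$ be ``universal'' one should rather note it depends only on $N$ and $\underline m_\lambda$, both of which are fixed by the hypotheses; I would phrase it as: there exists $\overline M_\lambda \in (0,1)$ depending only on $N$ and $\underline m_\lambda$.

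The main technical point to handle carefully is the reduction to the mean-zero (or Perron-complement) subspace for a general, possibly non-symmetric eigenvalue $\alpha$ — in particular when $x$ is complex. The clean way is: write $\RR^N = \RR\mathbf e \oplus H$ where $H$ is $\Lambda$-invariant (it is the sum of generalized eigenspaces for eigenvalues $\neq 1$, which is well-defined precisely because $1$ is a simple eigenvalue by Proposition \ref{p<1}); then $\mathrm{osc}$ restricted to $H$ is a norm (as $H \cap \RR\mathbf e = \{0\}$), the operator $\Lambda|_H$ has operator norm $\leq \delta(\Lambda)$ with respect to it, and its spectral radius — which is exactly $|\alpha_2|$ — is bounded by that norm. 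This last step (spectral radius $\leq$ operator norm) is immediate, so the only real work is establishing the norm bound on $H$, which follows directly from the $\ell^1$ contraction on mean-zero vectors quoted above. I expect no genuine obstacle here, just the bookkeeping of moving between the $\ell^1$-on-mean-zero-vectors picture and the oscillation seminorm; alternatively one can bypass oscillation entirely and work with $\pi$-weighted inner products, but the Dobrushin route keeps all constants explicit in terms of $\underline m_\lambda$.
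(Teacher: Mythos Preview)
Your proof is correct and takes a genuinely different route from the paper's. The paper argues by compactness: the set $\mathcal A$ of $N\times N$ stochastic matrices with all entries $\geq \underline m_\lambda$ is compact, the eigenvalue map is continuous (Kato), hence the supremum of $|\alpha_2|$ over $\mathcal A$ is attained and, by Perron--Frobenius applied to the maximiser, strictly less than $1$. This yields the existence of $\overline M_\lambda$ but no explicit value.

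Your Dobrushin-coefficient argument is more elementary and strictly more informative: it avoids the perturbation-theoretic continuity result, and it produces the explicit bound $|\alpha_2|\leq 1-N\underline m_\lambda$. It is worth noting that this is precisely the contraction factor the paper itself invokes one lemma later (Lemma~\ref{lem:contraction}), where the bound $\|A x_{\mathcal W}\|\leq (1-N\underline m_A)\|x_{\mathcal W}\|$ is quoted from Seneta; so your approach effectively merges Lemmas~\ref{lemma:upperBound} and~\ref{lem:contraction} into a single quantitative step. The one place to be slightly careful, as you note, is the passage to complex eigenvalues: your second formulation---restrict to the real $\Lambda$-invariant complement $H$ of $\RR\mathbf e$, observe that oscillation is a genuine norm there, and bound the spectral radius of $\Lambda|_H$ by its operator norm---handles this cleanly without needing to extend the oscillation seminorm to $\CC^N$.
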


\begin{proof}
Let $\mathcal{A} = \{A \in \mathcal{M}^N(\mathbb{R}), \forall i \in \{1,...,N\}, \sum a_{ij} = 1, \forall i,j \in \{1,...,N\}, a_{ij} \geq \underline m_\lambda \}$. 
Being composed of stochastic matrices, the set $\mathcal{A}$ is bounded (by $1$) for the norms induced by both the $1$ and $\infty$ vector norms on $\mathbb R^N$. Moreover, it is closed by construction. In particular, $\mathcal{A}$ is  a compact subset of $\mathcal{M}^N(\mathbb{R})$.

Let
\begin{align*}
  \mathcal{L} \colon &\mathcal{M}^N(\mathbb{R})\to \mathbb{C}^N\\
  &A \mapsto Sp(A),
\end{align*}
the application that returns the eigenvalues of a matrix, sorted in a nonincreasing (in modulus) order.
According to the Theorem II.5.1 of \cite{kato2013perturbation}, $\mathcal{L}$ is a continuous function on the set of stochastic matrices. In particular, $\mathcal A$ being compact, the numerical range of $\mathcal L$, 
\begin{equation*}
R(\mathcal{L}) := \{\mathcal{L}(A), \forall A \in \mathcal{A} \} \subset \mathbb C^N,
\end{equation*}
is also compact.
Continuous function reach their bounds on compact sets, so that one can take 
\begin{align*}
    \overline M_\lambda &= 1 - \sup_{\mathcal L \in \mathcal A}\{|\mu_2|, \mu \in R(\mathcal{L})\}\\
    & = 1 - \max\{|\mu_2|, \mu \in R(\mathcal{L})\}.
\end{align*}

\end{proof}

\subsection{Contraction of a stochastic primitive matrix}

    We shall now study in this section  properties of stochastic primitive matrices, in order to understand the behavior of the process $K_{\mathcal{T}}^t$.

\begin{lemma}
    \label{lem:contraction}
    Let $A \in \mathcal{M}_N(\mathbb{R})$ be a stochastic matrix that is bounded by below by $\underline m_A$ as in \ref{lemma:lambda>eps}. Let $ \mathcal M := \RR \mathbf e$ be the eigenspace associated to the eigenvalue $1$ and $\mathcal W$ the eigenspace associated to the remaining eigenvalues. One has
    \begin{enumerate}
        \item  $\mathbb{R}^N = \mathcal M \oplus \mathcal W$, both spaces being stable by $A$;
        \item There is a norm $\|\cdot\|$ on $\mathcal W$ and a distance $d$ on $\mathbb{R}^N$ such that for all $(x_\MM,x_\mathcal{W})\in \MM \oplus \mathcal W$,
        \begin{gather}
            d(x_\mathcal{W},\MM) = \| x_\mathcal{W}\|,  \label{eq:projection1}\\
            d(x_\MM+x_\mathcal{W}) = d(x_\mathcal{W},\MM), \label{eq:projection2} \\
            d( A(x_\MM+x_\mathcal{W}),\MM ) \leq (1 - N \underline{m}_A) d(x_\MM+x_\mathcal{W},\MM). \label{eq:projection 3}
        \end{gather}
        
    \end{enumerate}
\end{lemma}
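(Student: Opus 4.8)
The plan is to decompose $\mathbb{R}^N$ using the spectral structure of $A$ guaranteed by Proposition \ref{p<1} and Lemma \ref{alpha2<}, and then to exhibit an adapted norm on the complementary subspace in which $A$ acts as a strict contraction. First I would establish item (1): since $A$ is primitive (being bounded below by $\underline m_A > 0$ forces all entries positive, hence $A > 0$), Perron--Frobenius applies and $1$ is a simple eigenvalue with eigenvector $\mathbf e$, so $\mathcal M = \mathbb{R}\mathbf e$ is one-dimensional. The complement $\mathcal W$ is the sum of the generalized eigenspaces for the remaining eigenvalues; concretely one can take $\mathcal W = \{x \in \mathbb{R}^N : \sum_i x_i = 0\}$, i.e. the kernel of the left Perron eigenvector's associated projection. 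Both $\mathcal M$ and $\mathcal W$ are $A$-stable: $\mathcal M$ trivially since $A\mathbf e = \mathbf e$, and $\mathcal W$ because it is defined by a linear relation preserved by $A$ (the row sums being $1$ means $A$ maps $\{\sum x_i = 0\}$ into itself). This gives the direct sum decomposition.

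For item (2), the key point is to build a norm $\|\cdot\|$ on $\mathcal W$ for which $\|A|_{\mathcal W}\| \leq 1 - N\underline m_A$. Here I would use the classical Markov-chain contraction argument rather than a spectral-radius estimate (which would only give $1 - \overline M_\lambda$ as in Lemma \ref{alpha2<}, and not the sharp Dobrushin-type constant $1 - N\underline m_A$). The natural candidate is a seminorm on $\mathbb{R}^N$ descending to a genuine norm on $\mathcal W$: either the oscillation seminorm $\mathrm{osc}(x) = \max_i x_i - \min_i x_i$, or one-half of it. The Dobrushin coefficient of $A$ is $\delta(A) = \frac{1}{2}\max_{i,k}\sum_j |a_{ij} - a_{kj}|$, and the standard contraction inequality reads $\mathrm{osc}(A^T y) \le \delta(A)\,\mathrm{osc}(y)$ for the action on measures; dually, for the action $x \mapsto Ax$ on vectors one gets $\mathrm{osc}(Ax) \le \delta(A)\,\mathrm{osc}(x)$. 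When every entry satisfies $a_{ij} \ge \underline m_A$, a direct bound gives $\delta(A) \le 1 - N\underline m_A$: indeed for any two rows $i,k$, $\sum_j \min(a_{ij},a_{kj}) \ge \sum_j \underline m_A = N\underline m_A$, and $\frac12\sum_j|a_{ij}-a_{kj}| = 1 - \sum_j\min(a_{ij},a_{kj}) \le 1 - N\underline m_A$. Then I would set $\|x_{\mathcal W}\| := \frac{1}{2}\mathrm{osc}(x_{\mathcal W})$, which is a genuine norm on $\mathcal W$ because on $\{\sum x_i = 0\}$ the oscillation vanishes only at $0$, and define $d(x,y) := \|x_{\mathcal W} - y_{\mathcal W}\|$ where $x_{\mathcal W}$ denotes the $\mathcal W$-component; equivalently $d(x,y) = \frac12\mathrm{osc}(x-y)$ since shifting by a multiple of $\mathbf e$ does not change the oscillation.

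With these definitions, \eqref{eq:projection1} and \eqref{eq:projection2} are essentially tautological: $d(x_{\mathcal W}, \mathcal M) = \inf_{t}\frac12\mathrm{osc}(x_{\mathcal W} - t\mathbf e) = \frac12\mathrm{osc}(x_{\mathcal W}) = \|x_{\mathcal W}\|$ because oscillation is translation-invariant along $\mathbf e$ and is minimized (already) at any representative, and $d(x_{\mathcal M}+x_{\mathcal W}, \mathcal M) = \frac12\mathrm{osc}(x_{\mathcal W}) = d(x_{\mathcal W},\mathcal M)$ for the same reason. Finally \eqref{eq:projection 3}: $d(A(x_{\mathcal M}+x_{\mathcal W}), \mathcal M) = \frac12\mathrm{osc}(A(x_{\mathcal M}+x_{\mathcal W})) = \frac12\mathrm{osc}(A x_{\mathcal W}) \le (1 - N\underline m_A)\cdot\frac12\mathrm{osc}(x_{\mathcal W}) = (1-N\underline m_A)\,d(x_{\mathcal M}+x_{\mathcal W},\mathcal M)$, using $A\mathbf e = \mathbf e$ to drop the $x_{\mathcal M}$ term and the Dobrushin estimate above.

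The main obstacle I anticipate is getting the contraction constant exactly $1 - N\underline m_A$ rather than something weaker: this forces the use of the oscillation seminorm and the Dobrushin-coefficient lemma, and one must check carefully that $\frac12\mathrm{osc}$ restricts to a true norm on $\mathcal W$ (positive-definiteness) and that the decomposition $x = x_{\mathcal M} + x_{\mathcal W}$ is exactly the one that makes \eqref{eq:projection1}--\eqref{eq:projection2} hold — i.e. that $\mathcal W$ is the zero-sum hyperplane and not some other spectral complement. A secondary technical point is that the "distance $d$ on $\mathbb R^N$" in the statement is only a pseudo-distance on $\mathbb R^N$ (it vanishes on cosets of $\mathcal M$), which is consistent with the way it is used in \eqref{eq:projection2}; I would state this explicitly to avoid confusion. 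One should also note for later use (proof of contraction of $K^t_{\mathcal T}$) that the constant $1 - N\underline m_A < 1$ strictly, since $A$ stochastic with all entries $\ge \underline m_A$ forces $\underline m_A \le 1/N$, with equality only for the rank-one matrix $\frac1N\mathbf e\mathbf e^T$, in which case the contraction is immediate anyway.
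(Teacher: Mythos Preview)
Your approach is essentially the paper's: both take the oscillation seminorm $\|x\| = \max_{i,j}|x_i - x_j|$ on $\mathcal W$ and obtain the contraction constant $1 - N\underline m_A$ via the Dobrushin coefficient (the paper cites Seneta, Theorem~3.1, for this inequality rather than proving it directly as you do). The paper defines $d(x,y) = \|x_{\mathcal M} - y_{\mathcal M}\|_2 + \|x_{\mathcal W} - y_{\mathcal W}\|$, which is a genuine distance; your $d(x,y) = \tfrac12\mathrm{osc}(x-y)$ is only a pseudo-distance, as you note, but either choice verifies \eqref{eq:projection1}--\eqref{eq:projection 3}.

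There is one slip in your identification of $\mathcal W$. You take $\mathcal W = \{x : \sum_i x_i = 0\}$ and argue it is $A$-invariant because ``the row sums being $1$ means $A$ maps $\{\sum x_i = 0\}$ into itself''. This is false: row-stochasticity says $A\mathbf e = \mathbf e$, but invariance of $\ker(\mathbf e^T)$ would require $\mathbf e^T A = \mathbf e^T$, i.e.\ \emph{column}-stochasticity. The correct $A$-invariant complement to $\mathcal M$ is $\ker(\pi^T)$, where $\pi$ is the stationary distribution (the left Perron eigenvector), equivalently the sum of the generalized eigenspaces for the eigenvalues $\neq 1$. Fortunately this does not damage the rest of your argument: $\mathrm{osc}$ vanishes exactly on $\mathcal M = \RR\mathbf e$, so it restricts to a norm on \emph{any} algebraic complement of $\mathcal M$, and the Dobrushin estimate $\mathrm{osc}(Ax) \le (1 - N\underline m_A)\,\mathrm{osc}(x)$ holds for all $x \in \mathbb R^N$, not just for $x \in \mathcal W$.
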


    \begin{proof}
        \begin{enumerate}
        \item Classical decomposition result.
        \item We take $\|x_\mathcal{W}\| = \max_{i,j\in\{1,...,N\}}\{|x_{\mathcal{W},i} - x_{\mathcal{W},j}|\}$ for all $x_\mathcal{W} \in \WW$.
        The matrix $A$ being stochastic, one can use results on inhomogeneous Markov chains together (namely Theorem 3.1 in \cite{Seneta}) together with the upper bound  \eqref{alpha2<} on the eigenvalues of $A$  to have that
        \begin{equation*}
        \|A x_\WW\| \geq \left (1 - N \underline m_A \right ) \|x_\mathcal{W}\|.
        \end{equation*}
    
        Let $x = x_\MM + x_\mathcal{W}$ and $y = y_\MM + y_\mathcal{W}$ be in $\mathcal{M} \oplus \mathcal{W}$, we define $d$ as
        \begin{equation*}
        d(x,y) = \|x_\MM - y_\MM\|_2 + \|x_\WW - y_\WW\|,
        \end{equation*}
        with $\|\cdot\|_2$ being the euclidean norm on $\R^N$.
        \end{enumerate}
    \end{proof}

    \begin{rem}
      \label{rem:errorCuckerSmale}
        This result is inspired from the  Lemma 1 from \cite{CuckerSmale2004}, and has similar conclusions. Nevertheless, one has to bear in mind that with its set of hypotheses, the original result from \cite{CuckerSmale2004} is wrong. Indeed, being only stochastic and weakly irreducible is not enough to have the existence of a norm with the desired, precise contraction property (ii)-(c). For example,  
        \[
            \Lambda:= \begin{pmatrix} 0 & 1/2 & 1/2 \\ 3/4 & 0 & 1/4 \\ 1/8 & 7/8 & 0 \end{pmatrix} 
        \]
        is a stochastic, weakly irreducible matrix which has 2 complex eigenvalues, preventing the validity of (ii)-(c). Our set of hypotheses, as well as our new proof, prevent this.
    \end{rem}

\begin{corollary}
    If $A \in \mathcal{M}_N(\mathbb{R})$ is as in Lemma \ref{lem:contraction},  $ \mathcal M := \RR \mathbf e$ be the eigenspace associated to the eigenvalue $1$ and $\mathcal W$ the eigenspace associated to the remaining eigenvalues:
\begin{enumerate}
    \item  Then $\mathcal{F}^N = \mathcal M \oplus \mathcal W$, both spaces being stable by $A$;
    \item There is a norm $\|\cdot\|$ on $\mathcal W$ and a distance $d$ on $\mathbb{R}^N$ such that, for all $f_\MM\in\MM$ and for all $f_\WW\in \WW$,
    \begin{gather*}
        d(f_\WW,\MM) =\sqrt{ \int_\mathcal{E}\sum_{i=1}^l\| f_{\WW,i}(e)\|^2} \, \mathrm{d}e, \\
        d(f_\MM+f_\WW,\MM) = d(f_\WW,\MM), \\
        d(A(f_\MM+f_\WW),\MM) \leq \left (1 - N \underline{m}_A \right ) d(f_\MM+g_\WW,\MM).
    \end{gather*}
\end{enumerate}
\label{cor:function}
\end{corollary}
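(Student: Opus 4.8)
The plan is to \textbf{lift} the finite–dimensional statement of Lemma~\ref{lem:contraction} to the function setting by applying it \emph{pointwise}. Identify an element $f=(f_1,\dots,f_N)\in\mathcal F^N$ with the map $e\mapsto(f_1(e),\dots,f_N(e))$ and, for each $e\in\mathcal E$ and each coordinate index $i$ of the target euclidean space $\mathbb E^l$, with the vector $\hat f_i(e):=(f_1(e)_i,\dots,f_N(e)_i)\in\RR^N$. Since $A$ acts on $\mathcal F^N$ by $(Af)_j=\sum_k a_{jk}f_k$, one has $(Af)\,\hat{}\,_i(e)=A\,\hat f_i(e)$ for every $e$ and $i$: the action of $A$ is simultaneously pointwise in $e$ and coordinatewise in $\mathbb E^l$, so every assertion of the corollary will follow from the corresponding assertion of Lemma~\ref{lem:contraction} applied to each $\hat f_i(e)$ and then integrated over $\mathcal E$.

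Concretely, I would proceed as follows. \emph{(Decomposition and stability.)} Let $\Pi_\MM,\Pi_\WW$ be the linear spectral projections of $\RR^N$ onto $\MM=\RR\mathbf e$ and $\WW$ from Lemma~\ref{lem:contraction}(i), and let $f_\MM,f_\WW$ be the elements of $\mathcal F^N$ whose family of coordinate vectors are $e\mapsto\Pi_\MM\hat f_i(e)$ and $e\mapsto\Pi_\WW\hat f_i(e)$. Then $f=f_\MM+f_\WW$, the two summands lie in the function–space analogues of $\MM$ and $\WW$, and these subspaces are stable under $A$ because $\MM,\WW\subset\RR^N$ are. \emph{(The distance.)} Writing $\|\cdot\|$ and $d$ for the norm on $\WW$ and the distance on $\RR^N$ furnished by Lemma~\ref{lem:contraction}(ii), define for $f,g\in\mathcal F^N$
\[
  d(f,g):=\Biggl(\int_{\mathcal E}\sum_i d\bigl(\hat f_i(e),\hat g_i(e)\bigr)^2\,\dD e\Biggr)^{1/2},
\]
which is a metric (triangle inequality from Minkowski's inequality applied in the $e$–integral and in the coordinate sum). \emph{(The three properties.)} For any $g$ in the $\MM$–subspace one has $\hat g_i(e)\in\MM$ for all $e,i$, so by \eqref{eq:projection1}--\eqref{eq:projection2}, $d(\hat f_{\WW,i}(e),\hat g_i(e))\ge d(\hat f_{\WW,i}(e),\MM)=\|\hat f_{\WW,i}(e)\|$, with equality at $g=0$; squaring, summing over $i$ and integrating gives $d(f_\WW,\MM)=\bigl(\int_{\mathcal E}\sum_i\|\hat f_{\WW,i}(e)\|^2\,\dD e\bigr)^{1/2}$. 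The second identity follows by applying \eqref{eq:projection2} to $\Pi_\MM\hat f_i(e)+\Pi_\WW\hat f_i(e)$ for each $e,i$ and integrating. For the contraction, apply \eqref{eq:projection 3} to $\hat f_i(e)$ for each $e,i$, square, sum over $i$, integrate over $\mathcal E$ and take square roots; this yields $d(Af,\MM)\le(1-N\underline m_A)\,d(f,\MM)$ (the $g_\WW$ appearing in the last displayed line of the corollary is a typo for $f_\WW$).

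The only genuinely delicate point is \emph{well-posedness} rather than algebra: one must check that the pointwise images $e\mapsto\Pi_\MM\hat f_i(e)$, $e\mapsto\Pi_\WW\hat f_i(e)$ are again admissible (measurable and square–integrable, and — if one insists on staying in $\mathcal F$ — that $\mathcal F$ is stable under the linear operations involved, otherwise one works in the $L^2$–closure of the span of $\mathcal F$), and that Tonelli's theorem applies so that the pointwise infima over $\MM$ can be exchanged with the integral over $\mathcal E$ when evaluating $d(f_\WW,\MM)$. Once this is granted, the corollary is a line-by-line transcription of Lemma~\ref{lem:contraction}, integrated in $\dD e$ and summed over the coordinates of $\mathbb E^l$.
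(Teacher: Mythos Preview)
Your proposal is correct and follows the same idea as the paper: define the distance on $\mathcal F^N$ by integrating over $\mathcal E$ (and summing over the $l$ coordinates of $\mathbb E^l$) the pointwise distance from Lemma~\ref{lem:contraction}, and then read off each conclusion from the corresponding conclusion of the lemma applied to $\hat f_i(e)\in\RR^N$. The paper's proof is in fact much terser than yours---it essentially just writes down the distance $d(f,g)=\bigl(\int_{\mathcal E}\sum_{i=1}^l(\|f_{\MM,i}(e)-g_{\MM,i}(e)\|_2^2+\|f_{\WW,i}(e)-g_{\WW,i}(e)\|^2)\,\dD e\bigr)^{1/2}$ and leaves the verification implicit---so your careful treatment of the projections, of Minkowski for the triangle inequality, and of the exchange of infimum and integral actually fills in steps the paper omits.
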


\begin{proof}
Let $f = f_\MM + f_\WW$ and $g = g_\MM + g_\WW$ in $\MM \oplus \WW$, we define as
\begin{equation*}
d(g,\MM) =\sqrt{ \int_\mathcal{E}\sum_{i=1}^l(\|f_{\MM,i}(e)-g_{\MM(e),i}\|^2_2+\|f_{W,i}(e)-g_{W,i}(e)\|^2)}\, \mathrm{d}e,
\end{equation*}
\end{proof}

\begin{corollary}
Let $A \in \mathcal{M}_N(\mathbb{R})$ as in Lemma \ref{lem:contraction}, and $f\in \mathcal{F}^N$ such that $f = f_\MM + f_\WW \in \MM \oplus \WW$. There exists a distance $d$ on $\mathcal{F}$ such that 
\begin{equation*}
    d(A f,f_\MM) \leq \left (1 - N \underline m_A\right ) d(f,f_\MM).
\end{equation*}
\label{lemma:contr}
\end{corollary}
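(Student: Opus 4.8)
The statement is an immediate specialization of Corollary~\ref{cor:function} to the case of a common shared component $f_\MM$ playing the role of the anchor point, so the plan is to derive it directly from the previous two results rather than redo the spectral analysis. First I would observe that the hypotheses of Lemma~\ref{lem:contraction} give us the decomposition $\mathcal{F}^N = \MM \oplus \WW$ with both factors stable under $A$, so any $f \in \mathcal{F}^N$ uniquely writes $f = f_\MM + f_\WW$ with $f_\MM \in \MM$, $f_\WW \in \WW$; this is exactly the object the statement refers to. Then I would take the distance $d$ furnished by Corollary~\ref{cor:function}, whose defining formula is
\begin{equation*}
    d(f,g) = \sqrt{\int_\mathcal{E} \sum_{i=1}^l \left(\|f_{\MM,i}(e) - g_{\MM,i}(e)\|_2^2 + \|f_{\WW,i}(e) - g_{\WW,i}(e)\|^2\right) \, \mathrm{d}e}.
\end{equation*}

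Next I would unwind the two auxiliary identities from Corollary~\ref{cor:function}. Applying the identity $d(f_\MM + f_\WW, \MM) = d(f_\WW, \MM) = \sqrt{\int_\mathcal{E} \sum_{i=1}^l \|f_{\WW,i}(e)\|^2 \, \mathrm{d}e}$ with $\MM$-component $f_\MM$ in place of the ambient $\MM$, one sees that $d(f, f_\MM)$ measures precisely the $\WW$-part of $f$: indeed $f - f_\MM = f_\WW \in \WW$, and by construction $d$ restricted to a coset of $\MM$ only registers the $\WW$-coordinate. Concretely, $d(f, f_\MM) = \|f_\WW\|$ in the integrated norm above, and likewise $d(Af, f_\MM)$: since $A f_\MM = f_\MM$ (because $f_\MM \in \MM$ is fixed, $\MM$ being the eigenspace for the eigenvalue $1$) and $A$ preserves $\WW$, we get $Af - f_\MM = A f_\WW \in \WW$, so $d(Af, f_\MM) = \|A f_\WW\|$.

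Then the contraction is just the third inequality of Corollary~\ref{cor:function} (equivalently \eqref{eq:projection 3} of Lemma~\ref{lem:contraction}) transported pointwise-in-$e$ and then integrated: for each fixed $e \in \mathcal{E}$ the finite-dimensional estimate $d(A(x_\MM + x_\WW), \MM) \leq (1 - N\underline m_A) d(x_\MM + x_\WW, \MM)$ gives $\|(Af_\WW)(e)\| \leq (1 - N\underline m_A)\|f_\WW(e)\|$ coordinatewise; squaring, summing over $i = 1,\dots,l$, integrating over $\mathcal{E}$ and taking the square root yields
\begin{equation*}
    d(Af, f_\MM) = \|A f_\WW\| \leq (1 - N \underline m_A)\, \|f_\WW\| = (1 - N\underline m_A)\, d(f, f_\MM),
\end{equation*}
which is the claim. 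I would close by noting that $0 < 1 - N\underline m_A < 1$ under the standing hypotheses (this follows from $A$ being stochastic and bounded below by $\underline m_A$, forcing $N \underline m_A \leq 1$, and strict positivity of $\underline m_A$), so the map $f \mapsto Af$ is a genuine contraction toward $f_\MM$.

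\textbf{Main obstacle.} The only delicate point is bookkeeping: making sure that the distance $d$ produced by Corollary~\ref{cor:function} really does have the property that $d(\cdot, f_\MM)$ depends only on the $\WW$-component, for an \emph{arbitrary} anchor $f_\MM \in \MM$ and not merely for the particular decomposition used to define $d$. This hinges on the fact that the $\MM$-part of the distance is the honest Euclidean norm on the (one-dimensional, constant) $\MM$-directions, so that translating by any element of $\MM$ leaves the $\WW$-contribution untouched — i.e. $d$ is invariant under the diagonal action of $\MM$ on the first slot. I would spell this invariance out explicitly, since it is what licenses replacing the fixed $\MM$ in \eqref{eq:projection2}--\eqref{eq:projection 3} by the point $f_\MM$; everything else is the routine pointwise-then-integrate passage already used in Corollary~\ref{cor:function}.
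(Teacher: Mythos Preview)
Your proposal is correct and follows exactly the paper's approach: the paper's own proof is the single line ``Consequence of the previous corollary,'' and you have simply unpacked what that entails, deriving the inequality from Corollary~\ref{cor:function} via the decomposition $Af - f_\MM = Af_\WW \in \WW$ and the pointwise contraction estimate. Your additional remark about the $\MM$-invariance of $d$ is a legitimate clarification of a point the paper leaves entirely implicit.
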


\begin{proof}
Consequence of the previous corollary.
\end{proof}

We recall that the application $\mathcal{T} : \mathcal{F} \times \mathbb{N} \mapsto \mathcal{F}$ and the process $K^t_{\mathcal{T}}$ are defined by $\mathcal{T}(f,t) = \Lambda^t f$ and $K^t_{\mathcal{T}} = \mathcal{T}(k^t,t)$. This may be interpreted as an idealistic step of learning. Moreover, we assumed that $\tau = 0$, namely no individual learning occurs in the model.

\begin{theorem}
    \label{thm:contractionK}
    If $\Gamma > 0$, and the minimum credibility $c_{\min} >0$ is fixed, then for all times $t$, there exists a distance $d_{\Lambda^t}$ and $\underline m > 0$ independent on $t$ such that
\begin{equation*}
    d(K_{\mathcal{T}}^{t+1},k^{t}_\MM) \leq \left(1 - \underline m\right) d(k^t,k^{t}_\MM),
\end{equation*}  
where $k^t_\mathcal{M}$ is the projection of $k^t$ on $\mathcal{M}$.
\end{theorem}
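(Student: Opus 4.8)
The plan is to deduce the statement, at each fixed time $t$, from the abstract contraction estimate of Corollary~\ref{lemma:contr} applied to the matrix $A=\Lambda^t$; the only genuine work is to check that the contraction constant so produced does not depend on $t$.

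First I would record that, because $\Gamma>0$ and $c_{\min}>0$, every normalizing sum $\sum_{l=1}^N\gamma_{il}c_{il}^t$ appearing in \eqref{def:socialLearningMat} is at least $N\underline m_\gamma c_{\min}>0$, where $\underline m_\gamma:=\min_{i,j}\gamma_{ij}$. Hence for every $t$ the matrix $\Lambda^t$ is a genuine stochastic matrix (the exceptional branch of \eqref{def:socialLearningMat} never occurs), and Lemma~\ref{lemma:lambda>eps} applies to it: there is $\underline m_\lambda>0$ with $\lambda_{ij}^t\ge\underline m_\lambda$ for all $i,j$. The point the whole argument hinges on is that the constant $\underline m_\lambda$ furnished by that lemma is built only from $N$, $\underline m_\gamma$ and $c_{\min}$, none of which moves with $t$; the time-dependent credibilities enter only through the uniform bounds $c_{ij}^t\ge c_{\min}$ and an upper bound on $\sum_l\gamma_{il}c_{il}^t$. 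In particular the same $\underline m_\lambda$ works for all $t$, and since a row of $\Lambda^t$ has $N$ entries each $\ge\underline m_\lambda$ summing to $1$, one also has $N\underline m_\lambda\le 1$.

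Next I would fix $t$ and invoke Corollary~\ref{lemma:contr} with $A=\Lambda^t$ and lower bound $\underline m_A=\underline m_\lambda$ (its hypotheses hold by Lemma~\ref{lem:contraction}, which itself uses the uniform spectral gap of Lemma~\ref{lemma:upperBound}). Decomposing $k^t=k^t_\MM+k^t_\WW$ in $\mathcal{F}^N=\MM\oplus\WW$ relative to $\Lambda^t$ — so that $k^t_\MM$ is precisely the projection of $k^t$ onto $\MM$ — this yields a distance $d_{\Lambda^t}$ on $\mathcal{F}^N$ with $d_{\Lambda^t}(\Lambda^t k^t,k^t_\MM)\le(1-N\underline m_\lambda)\,d_{\Lambda^t}(k^t,k^t_\MM)$. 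Since $K_{\mathcal T}^{t+1}$ is, by definition, the result $\Lambda^t k^t$ of one idealistic learning step applied to $k^t$ (cf.~\eqref{def:Tcal}, \eqref{def:idealProc}), setting $\underline m:=N\underline m_\lambda\in(0,1]$ — independent of $t$ by the previous step — finishes the proof.

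The main obstacle is exactly the uniformity claimed in the second step: the social learning matrix $\Lambda^t$ changes at every step through the evolving credibility matrix $C^t$ (which itself depends on the current population state $k^t$), so one must verify carefully that the positivity hypotheses $\Gamma>0$ and $c_{\min}>0$ do force a lower bound on the entries of $\Lambda^t$ that is blind to this time dependence — otherwise the contraction ratio would drift with $t$ and the conclusion could fail. Everything else is a routine call to the contraction machinery set up in Lemma~\ref{lem:contraction} and Corollary~\ref{lemma:contr}; note in particular that the distance $d_{\Lambda^t}$ is allowed to depend on $t$ (it is built from the spectral projections of $\Lambda^t$), only the ratio $1-\underline m$ being required to be uniform.
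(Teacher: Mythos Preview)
Your proof is correct and follows precisely the route the paper takes: the paper's own proof is the single line ``Consequence of lemmas~\ref{lemma:primitive} and~\ref{lemma:lambda>eps}, and corollary~\ref{lemma:contr},'' and you have simply unpacked this, in particular making explicit the key point that the lower bound $\underline m_\lambda$ from Lemma~\ref{lemma:lambda>eps} depends only on $N$, $\underline m_\gamma$ and $c_{\min}$ and hence is uniform in $t$. Your careful discussion of why the time-dependence of $\Lambda^t$ through $C^t$ does not spoil the uniformity is a useful clarification that the paper leaves implicit.
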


\begin{proof}
Consequence of lemmas \ref{lemma:primitive} and \ref{lemma:lambda>eps}, and corollary \ref{lemma:contr}.
\end{proof}

We define an idealistic deterministic process $K^t$ by $K^{t+1} = \Lambda_t K^t$ and $K^0 = k^0$. Theorem \ref{thm:contractionK} implies that the idealistic, deterministic process converges to a common shared knowledge:

\begin{corollary}
    Under the hypotheses of Theorem \ref{thm:contractionK}, there exist $(K^0_\MM, K^0_\WW)$ in $\MM \otimes \WW$ such that $K^0 = K^0_\MM + K^0_\WW $. Then,
    \begin{equation*}
        \lim_{t\rightarrow+\infty}(K^t,K^0_\MM) = 0.
    \end{equation*}
\end{corollary}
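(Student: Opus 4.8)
The plan is to iterate the contraction estimate from Theorem \ref{thm:contractionK}. First I would observe that the idealistic deterministic process $K^t$ differs from the one-step process $K^t_{\mathcal T}$ only in the way time is handled: since $\tau=0$, the stochastic evolution $k^{t+1}$ is obtained from $K^t_{\mathcal T} = \Lambda^t k^t$, whereas the deterministic process is precisely $K^{t+1} = \Lambda_t K^t$. With no individual learning and no sampling fluctuations, these actually coincide, so one can directly apply the contraction of Theorem \ref{thm:contractionK} (equivalently Corollary \ref{lemma:contr}) to $A = \Lambda_t$ at each step.

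The key step is then to check that the projection $K^t_\MM$ onto $\MM = \RR\mathbf e$ is invariant along the deterministic flow. Because every $\Lambda_t$ is stochastic, it fixes $\mathbf e$ and hence acts as the identity on $\MM$; moreover $\MM$ and $\WW$ are both stable under each $\Lambda_t$ by part (i) of Lemma \ref{lem:contraction} (and its function-space analogue, Corollary \ref{cor:function}). Consequently, writing $K^t = K^t_\MM + K^t_\WW$, we get $K^{t+1}_\MM = K^t_\MM$ for all $t$, so in fact $K^t_\MM = K^0_\MM$ for every $t$. This is the point that makes the statement meaningful: the limit set is the single point $K^0_\MM$, determined by the initial condition.

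Granting this, I would apply Corollary \ref{lemma:contr} with $A = \Lambda_t$ to obtain
\begin{equation*}
    d(K^{t+1}, K^0_\MM) = d(\Lambda_t K^t, K^t_\MM) \leq (1 - N\underline m_\Lambda)\, d(K^t, K^t_\MM) = (1 - N\underline m_\Lambda)\, d(K^t, K^0_\MM),
\end{equation*}
where $\underline m_\Lambda = \underline m_\lambda > 0$ is the uniform lower bound on the entries of all the $\Lambda_t$ provided by Lemma \ref{lemma:lambda>eps} (valid since $\Gamma > 0$ and $c_{\min} > 0$; note this bound is independent of $t$). Iterating gives $d(K^t, K^0_\MM) \leq (1 - N\underline m_\Lambda)^t d(K^0, K^0_\MM) \to 0$ as $t \to +\infty$, since $0 < 1 - N\underline m_\Lambda < 1$.

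The main obstacle — really the only subtle point — is making the $t$-uniformity of the contraction rate watertight: the distance $d_{\Lambda^t}$ furnished by Theorem \ref{thm:contractionK} a priori depends on $t$ (it is built from the eigenspace decomposition of $\Lambda_t$), so one must argue that the contraction factor $1 - \underline m$ can be taken independent of $t$, which is exactly what Lemma \ref{lemma:upperBound} and Lemma \ref{lemma:lambda>eps} guarantee via the uniform lower bound $\underline m_\lambda$ on all entries. A careful write-up should either fix one reference distance (e.g. take $d$ associated to the common lower bound $\underline m_\lambda$, so that the estimate $d(\Lambda_t x, \MM) \le (1 - N\underline m_\lambda) d(x,\MM)$ holds for all $t$ with the \emph{same} $d$) or else note that all these distances are equivalent on the finite-dimensional relevant space with constants independent of $t$. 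With that settled, the geometric decay and hence the stated limit follow immediately.
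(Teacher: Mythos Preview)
The paper gives no proof beyond declaring this an immediate consequence of Theorem~\ref{thm:contractionK}; your plan to iterate the one-step contraction is precisely what is intended, and your discussion of the $t$-uniformity of the contraction factor is more careful than anything the paper writes down.

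There is, however, a genuine gap in your second paragraph. You invoke part~(i) of Lemma~\ref{lem:contraction} to conclude that $\WW$ is stable under \emph{every} $\Lambda_t$, and hence that $K^{t+1}_\MM = K^t_\MM$. But that lemma, applied to a matrix $A$, only guarantees that the complementary eigenspace $\WW_A$ of \emph{that particular} $A$ is $A$-stable; it says nothing about stability of $\WW_{\Lambda_0}$ under $\Lambda_1$. Since the credibility matrix---and therefore $\Lambda_t$---changes along the flow, the matrices $\Lambda_t$ in general have different left Perron vectors $\pi_t$, hence different complements $\WW_t = \{x : \pi_t^{T} x = 0\}$, and the projection onto $\MM$ along $\WW_t$ is \emph{not} constant in $t$. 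Your conclusion that $K^t_\MM = K^0_\MM$ for all $t$ therefore does not follow from the lemmas you cite.

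What the uniform lower bound $\underline m_\lambda$ together with the Dobrushin-type estimate behind Lemma~\ref{lem:contraction} \emph{does} yield cleanly is that the matrix-independent seminorm $x \mapsto \max_{i,j}|x_i - x_j|$ (resp.\ its function-space version in Corollary~\ref{cor:function}) contracts by the common factor $1 - N\underline m_\lambda$ under every $\Lambda_t$; iterating gives $d(K^t,\MM_\mathcal{F}) \to 0$, i.e.\ convergence to the consensus \emph{set}. Pinning down the limit as the specific point $K^0_\MM$ would require either that all $\Lambda_t$ share a common invariant complement $\WW$, or a separate analysis of the backward product $\Lambda_{t-1}\cdots\Lambda_0$. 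Neither you nor the paper supplies this, and it does not follow from the stated hypotheses alone.
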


\subsection{Learning theory}

The results presented in this part are inspired by \cite{CuckerSmale2001}. This article deals with the inference of functions to match with random samples. In our case, the functions are knowledge-like functions and samples come from social learning and individual learning. Nevertheless, our theoretical results shall only deal  with the case where individuals learn from social sources only (under the hypothesis $\tau = 0$). The results of this theory implies the convergence of the process $\Delta k^t$ with high probability.

\subsubsection{Sample error}

We study the learning process from random samples governed by the probability measure $\rho$ on $\mathcal{Z} = \mathcal{E} \times \mathcal{C}$. We recall that $\mathcal E$ is a compact subset of  $\mathbb{R}^n$, and that  $\mathcal{C}$ is a subset of an euclidean space containing zero. 

%Let us start with some definition needed in this section.

%\begin{definition}
% Let $\rho_\mathcal{E} $ be the marginal probability measure on $\mathcal{E}$ defined as: for all sample $e\in \mathcal{E}$ we have $\rho_\mathcal{E}(e) = \rho(\pi^{-1}(e))$ where $\pi : \mathcal{E} \times \mathcal{C} \to \mathcal{E}$ is the projection on $\mathcal{E}$.
%\end{definition}

%\begin{definition}
%  For every $e$ in $\mathcal{E}$  $\rho(c|e)$ is the conditional probability measure on $\mathcal{C}$ and is defined as that for every integrable function $\phi$
 
%  \begin{equation*}
%     \int_{\mathcal{E}\times \mathcal{C}}\phi(e,c) \, \mathrm{d}\rho = \int_{\mathcal{E}}\left(\int_{\mathcal{C}}\phi(e,c) \, \mathrm{d}\rho(c|e)\right) \, \mathrm{d}\rho_i.
% \end{equation*}
%\end{definition}

\begin{definition}
We define the \emph{least square error} of f as
 \begin{equation*}
    \varepsilon(f) = \int_\mathcal{Z} \|f(e) - c\|_\mathcal{C}^2\, \mathrm{d}\rho,
\end{equation*}
for $f : \mathcal{E} \to \mathcal{C}$,  where $\|\cdot\|_\mathcal{C}$ is a norm on $\mathcal{C}$ associated with the inner product $\langle\cdot,\cdot\rangle_\mathcal{C}$ of the ambient euclidean space $\mathbb{E}^l$.
\end{definition}

% \begin{definition}
% The regression function $f_\rho$ is defined by
% \begin{equation*}
%     f_\rho(e) = \int_\mathcal{C} c \, \mathrm{d}\rho(c|e).
% \end{equation*}
% \end{definition}

\begin{prop}
For every $f : \mathcal{E} \to \mathcal{C}$,
\begin{equation*}
    \varepsilon(f) = \int_\mathcal{E} \|f(e) - f_\rho(e)\|_\mathcal{C}^2 \, \mathrm{d}\rho_\mathcal{E} + \varepsilon(f_\rho),
\end{equation*}
where  $f_\rho(e) := \int_\mathcal{C} c \, \mathrm{d}\rho(c|e)$, for any $e \in \mathcal E$.
\label{prop:sample}
\end{prop}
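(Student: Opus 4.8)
The plan is to expand the squared norm inside the integral defining $\varepsilon(f)$ by inserting $\pm f_\rho(e)$, and then to exploit the disintegration formula $\int_{\mathcal Z}\phi\,\mathrm d\rho=\int_{\mathcal E}\bigl(\int_{\mathcal C}\phi\,\mathrm d\rho(c|e)\bigr)\,\mathrm d\rho_{\mathcal E}$ together with the defining property of $f_\rho$ to kill the cross term. Concretely, for every $e\in\mathcal E$ and $c\in\mathcal C$ write
\begin{equation*}
\|f(e)-c\|_{\mathcal C}^2=\|f(e)-f_\rho(e)\|_{\mathcal C}^2+2\,\langle f(e)-f_\rho(e),\,f_\rho(e)-c\rangle_{\mathcal C}+\|f_\rho(e)-c\|_{\mathcal C}^2,
\end{equation*}
which is just the parallelogram-type identity in the ambient euclidean space $\mathbb E^l$.

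Next I would integrate this identity against $\rho$ over $\mathcal Z$ and treat the three resulting terms separately. For the first term, the integrand does not depend on $c$, so integrating over $\mathcal C$ against the probability measure $\rho(c|e)$ leaves it unchanged, and $\int_{\mathcal Z}\|f(e)-f_\rho(e)\|_{\mathcal C}^2\,\mathrm d\rho=\int_{\mathcal E}\|f(e)-f_\rho(e)\|_{\mathcal C}^2\,\mathrm d\rho_{\mathcal E}$. The third term is, by definition, $\varepsilon(f_\rho)$. For the cross term, I would use linearity of the inner product to pull the factor $f(e)-f_\rho(e)$ (constant in $c$) out of the inner integral:
\begin{equation*}
\int_{\mathcal Z}\langle f(e)-f_\rho(e),\,f_\rho(e)-c\rangle_{\mathcal C}\,\mathrm d\rho=\int_{\mathcal E}\Bigl\langle f(e)-f_\rho(e),\ \int_{\mathcal C}\bigl(f_\rho(e)-c\bigr)\,\mathrm d\rho(c|e)\Bigr\rangle_{\mathcal C}\,\mathrm d\rho_{\mathcal E}.
\end{equation*}
Since $\rho(\cdot|e)$ is a probability measure on $\mathcal C$, we have $\int_{\mathcal C}f_\rho(e)\,\mathrm d\rho(c|e)=f_\rho(e)$, while $\int_{\mathcal C}c\,\mathrm d\rho(c|e)=f_\rho(e)$ by the very definition of $f_\rho$; hence the inner integral is the zero vector and the cross term vanishes identically. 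Summing the three contributions gives the claimed identity.

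There is essentially no serious obstacle here: the only points requiring a word of care are the measurability/integrability needed to apply the disintegration formula (guaranteed since $\mathcal E$ is compact, $\mathcal C$ is bounded as a subset of $\mathbb E^l$, and $f$ maps into $\mathcal C$, so all integrands are bounded) and the fact that $\int_{\mathcal C}\mathrm d\rho(c|e)=1$, which lets both the term constant in $c$ pass through unchanged and the factor $f_\rho(e)$ be pulled out. This is the function-valued analogue of the classical bias–variance decomposition of the least-squares functional from \cite{CuckerSmale2001}, and the argument is identical line by line.
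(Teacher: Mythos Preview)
Your proof is correct and follows essentially the same approach as the paper: insert $\pm f_\rho(e)$, expand the square into three terms, use the disintegration formula to reduce the first term to an integral over $\mathcal E$, identify the third term as $\varepsilon(f_\rho)$, and kill the cross term via the definition of $f_\rho$. Your added remarks on integrability and the fact that $\int_{\mathcal C}\mathrm d\rho(c|e)=1$ are not in the paper's proof but only make the argument more careful.
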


\begin{proof} Adding and subtracting $f_\rho$ yields
    \begin{align*}
        \varepsilon(f) %&= \int_\mathcal{Z} \|f(e) - f_\rho(e) + f_\rho(e) - c\|^2\, \mathrm{d}\rho\\
        &= \int_\mathcal{Z} \|f(e) - f_\rho(e)\|^2\, \mathrm{d}\rho + \int_\mathcal{Z} \|f_\rho(e) - c\|^2\, \mathrm{d}\rho + 2 \int_\mathcal{Z} \langle f(e) - f_\rho(e) , f_\rho(e) - c \rangle_\mathcal{C}\, \mathrm{d}\rho\\
        &= A + \varepsilon(f_\rho) + 2B.
    \end{align*}
    We have \begin{align*}
        A& = \int_{\mathcal{E}}\left(\int_\mathcal{C}\|f(e) - f_\rho(e)\|^2 \, \mathrm{d}\rho(c|e)\right) \, \mathrm{d}\rho_\mathcal{E}\\
        & = \int_\mathcal{E}\left(\|f(e) - f_\rho(e)\|^2 \int_\mathcal{C}\, \mathrm{d}\rho(c|e)\right) \, \mathrm{d}\rho_\mathcal{E}\\
        & = \int_\mathcal{E} \|f(e) - f_\rho(e)\|^2 \, \mathrm{d}\rho_\mathcal{E}.\\
    \end{align*}
    For the second term we have 
    \begin{align*}
        B& =  \int_\mathcal{E}\left(\int_\mathcal{C}\langle f(e) - f_\rho(e) , f_\rho(e) - c \rangle_\mathcal{C} \, \mathrm{d}\rho(c|e)\right) \, \mathrm{d}\rho_\mathcal{E}\\
        %& =  \int_\mathcal{E}\langle f(e) - f_\rho(e) ,\int_\mathcal{C} (f_\rho(e) - c) \, \mathrm{d}\rho(c|e) \rangle  \, \mathrm{d}\rho_i\\
        & =  \int_\mathcal{E}\langle f(e) - f_\rho(e) ,f_\rho(e) - \int_\mathcal{C}c \, \mathrm{d}\rho(c|e) \rangle_\mathcal{C}  \, \mathrm{d}\rho_\mathcal{E}\\
        & =  \int_\mathcal{E}\langle f(e) - f_\rho(e) ,f_\rho(e) - f_\rho(e) \rangle  \, \mathrm{d}\rho_\mathcal{E} = 0.
    \end{align*}
\end{proof}

As a consequence of the proposition \ref{prop:sample}, the \emph{regression function} $f_\rho$ minimizes the mean square error $\varepsilon$. 

\begin{definition}
    Let $f_\mathcal{F}$ be the \emph{target function} that minimizes $\varepsilon$:
\begin{equation*}
    f_\mathcal{F} \in \argmin_{f\in\mathcal{F}} \varepsilon(f).
\end{equation*}
\end{definition}

During the learning phase, the probability measure $\rho$ is not assumed to be known. The learning process is a minimisation procedure on a sample $S = ((e_1,c_1),...,(e_m,c_m))$, $m\in\mathbb{N}^*$. 

\begin{definition}
    We define the \emph{empirical error} $\varepsilon_S$ of f on the sample S by 
    \begin{equation*}
        \varepsilon_S(f) = \frac{1}{m}\sum_{i=1}^m \|f(e_i)-c_i\|^2\, \mathrm{d}\rho,
    \end{equation*}
    and $f_S$ the \emph{empirical target} function, namely a minimizer of $\varepsilon_S$:
    \begin{equation*}
        f_S \in \argmin_{f\in\mathcal{F}} \varepsilon_S(f).
    \end{equation*}
\end{definition}

This minimizer is of course not unique. Nevertheless, when the size $m$ of the sample is large enough, the empirical target function will approximate the target function. More precisely, one has the following classical concentration inequality from \cite{Pollard84}: 

\begin{prop}
We assume that:
\begin{enumerate}
    \item $\mathcal{F}$ is a compact and convex set;
    \item there exists $M\in\mathbb{R}^*_+$, such that for all $f\in \mathcal{F}, \|f(e)-c\|_\mathcal{C}\leq M$ almost everywhere;
    \item $\rho$ is a probability measure on $\mathcal{Z}$.
\end{enumerate}
Then for all $\eta > 0$,

\begin{equation*}
    \mathrm{Prob}\left\{\int_\mathcal{E} \|f_S(e) - f_\mathcal{F}(e)\|^2_\mathcal{C}\, \mathrm{d}\rho_\mathcal{E}\leq \eta\right\} \geq 1 - \mathcal{N}(\mathcal{F},\frac{\eta}{24M})e^{\frac{-m\eta}{288 M^2}}
\end{equation*}

where $\mathcal{N}(\mathcal F,s)$ is the so-called \emph{covering number}, namely the minimal $l \in \mathbb{N}$ such that there exist $l$ disks in $\mathcal F$ with radius $s$ covering $\mathcal F$.
    Since $\mathcal F$ is compact,  this number is finite.
\label{prop:proba}
\end{prop}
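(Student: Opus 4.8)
The plan is to derive the concentration inequality from a uniform-convergence argument over the compact set $\mathcal{F}$, exactly as in the classical empirical-risk-minimization proof of \cite{Pollard84, CuckerSmale2001}. First I would reduce the quantity of interest to a statement about the empirical error: since $f_{\mathcal F}$ minimizes $\varepsilon$ over $\mathcal F$ and $f_S$ minimizes $\varepsilon_S$ over $\mathcal F$, Proposition \ref{prop:sample} applied to $\mathcal Z$ (together with the fact that here $f_\rho\in\mathcal F$, or more generally by comparing $\varepsilon(f_S)$ and $\varepsilon(f_{\mathcal F})$ through their empirical counterparts) gives
\begin{equation*}
    \int_\mathcal{E} \|f_S(e) - f_\mathcal{F}(e)\|^2_\mathcal{C}\, \mathrm{d}\rho_\mathcal{E} = \varepsilon(f_S) - \varepsilon(f_\mathcal{F}) \leq \bigl(\varepsilon(f_S) - \varepsilon_S(f_S)\bigr) + \bigl(\varepsilon_S(f_\mathcal{F}) - \varepsilon(f_\mathcal{F})\bigr) \leq 2 \sup_{f\in\mathcal F} |\varepsilon(f) - \varepsilon_S(f)|,
\end{equation*}
using $\varepsilon_S(f_S)\leq\varepsilon_S(f_{\mathcal F})$. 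So it suffices to bound $\mathrm{Prob}\{\sup_{f\in\mathcal F}|\varepsilon(f)-\varepsilon_S(f)| > \eta/2\}$.

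Next I would set up the covering argument. For a fixed $f$, $\varepsilon_S(f)$ is an empirical average of the i.i.d.\ random variables $\|f(e_i)-c_i\|^2$, which are bounded by $M^2$ thanks to hypothesis (2), so $\varepsilon_S(f)-\varepsilon(f)$ has mean zero and a one-sample Bernstein/Hoeffding bound controls the deviation probability of this single $f$. To pass to the supremum, cover $\mathcal F$ by $\mathcal N(\mathcal F, s)$ disks of radius $s$ centered at functions $f_1,\dots,f_\ell$; for $f$ in the disk around $f_j$, the Lipschitz-type estimate $|\varepsilon(f)-\varepsilon(f_j)| \leq 2M\|f-f_j\|_\infty$ (and the same for $\varepsilon_S$), which follows from expanding the square and using $\|f(e)-c\|_{\mathcal C}, \|f_j(e)-c\|_{\mathcal C}\leq M$, shows that $|\varepsilon(f)-\varepsilon_S(f)|$ differs from $|\varepsilon(f_j)-\varepsilon_S(f_j)|$ by at most $4Ms$. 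Choosing the radius $s = \eta/(24M)$ absorbs this slack, so a union bound over the $\mathcal N(\mathcal F,\eta/(24M))$ centers together with the single-function concentration bound (with the variance estimate $\mathrm{Var}(\|f(e)-c\|^2)\leq M^2\varepsilon(f)$ handled crudely, or the bounded-difference bound with range $M^2$) yields the stated exponential factor $e^{-m\eta/(288M^2)}$ after tracking the numerical constants.

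The main obstacle I anticipate is purely bookkeeping: getting the numerical constants $24$ and $288$ to come out right requires being careful about (i) the factor $2$ lost in the reduction to the supremum, (ii) the factor $2$ lost in splitting the two-sided deviation $\varepsilon(f)-\varepsilon_S(f)$ into the piece above and below, (iii) the Lipschitz constant $2M$ and the radius choice, and (iv) which concentration inequality is invoked — Hoeffding on variables of range $M^2$ gives $\exp(-2m\eta^2/M^4)$ which is the wrong shape, so one must instead use a Bernstein-type inequality exploiting that the variance is bounded by $M^2\varepsilon(f)$ and that we may assume $\varepsilon(f)\geq\eta$ in the relevant regime (the standard trick in \cite{CuckerSmale2001}), which is what produces the linear-in-$\eta$ exponent. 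Since the statement is quoted directly from \cite{Pollard84}, I would most likely just cite it and sketch the covering reduction above rather than reprove the concentration step from scratch. There is also a minor gap to flag: the displayed inequality implicitly needs $f_\rho \in \mathcal F$ (so that $\varepsilon(f_{\mathcal F}) = \varepsilon(f_\rho)$ and Proposition \ref{prop:sample} gives the clean identity $\varepsilon(f_S)-\varepsilon(f_{\mathcal F}) = \int_{\mathcal E}\|f_S-f_{\mathcal F}\|^2\,\mathrm d\rho_{\mathcal E}$); if $f_\rho\notin\mathcal F$ one instead gets this quantity bounded by $\varepsilon(f_S)-\varepsilon(f_{\mathcal F})$, which is still $\leq 2\sup_f|\varepsilon(f)-\varepsilon_S(f)|$, so the conclusion is unaffected.
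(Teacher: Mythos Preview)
The paper does not prove this proposition at all: it is stated as a quotation of a ``classical concentration inequality from \cite{Pollard84}'' and the text moves on immediately afterwards. Your sketch --- reduce to the excess risk via $\varepsilon_S(f_S)\le\varepsilon_S(f_{\mathcal F})$, cover $\mathcal F$ at scale $\eta/(24M)$, control the oscillation of $\varepsilon$ and $\varepsilon_S$ on each ball by the $2M$-Lipschitz estimate, and apply a Bernstein-type bound at the centers to get the linear-in-$\eta$ exponent --- is the standard Cucker--Smale/Pollard argument and is correct; you have supplied strictly more than the paper does, and your remark that one would ultimately just cite the reference matches exactly what the authors do.
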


We can now get back to our model. We recall that the probability measure $\rho^{i,t}$ allows to draw the sample for the learning of the individual $i$ at time $t$, and that $\tau = 0$. The probability measure $\rho^{i,t}$ then depends only on social learning. During the learning phase of our model we have :
\begin{align*}
    f_{\rho^{i,t}}(e) &= \int_\mathcal{C} c \, \mathrm{d}\rho(c|e)\\
    %&= \int_\mathcal{C} c \sum_{j=1}^N\Lambda^t_{ij}\mathds{1}_{k_j(e)=c}\, \mathrm{d}c\\
    &= \sum_{j=1}^N \Lambda^t_{ij} \int_\mathcal{C} c \mathds{1}_{k_j(e)=c}\, \mathrm{d}c\\
    &= \sum_{j=1}^N \Lambda^t_{ij} k_j(e) \, \mathrm{d}c.\\
\end{align*}

Since $\mathcal{F}$ is convex, $f_{\rho^{i,t}} \in \mathcal{F}$. If $\mathcal E$ is finite, or in the other case, if $\mathcal{F}$ is a set a continuous functions, we have $f^{i,t}_\mathcal{F} = f_{\rho^{i,t}}$ with $f^{i,t}_\mathcal{F}$ being the minimiser of the error $\varepsilon$ with $\rho = \rho^{i,t}$.

\subsection{Main result} 
{Combining our results on the one-step idealistic process, together with the ones on learning theory, we are able to study the convergence of the full process $k^t$ with high probability.}

We recall that $\MM_\mathcal{F} = \{(k,...,k),k \in \mathcal{F}\}$. 

\begin{theorem} 
We suppose that $\tau =0$, $\Gamma > 0$, $c_{\min} >0$, and $\mathcal{F}$ is compact and convex.
There exist some constants $\alpha_* < 1$, $M > 0$, and $A\geq0$ such that for each $0<\delta<1$, and $t \geq 0$, if the sample size $m\geq m_t = m_t\left(\MM_\mathcal{F}, k^0, M, \alpha_*, \delta\right)$, then
\begin{equation*}
    d(k^t, \MM_\mathcal{F}) \leq A \, \alpha^t_* \,d(k^0, \MM_\mathcal{F}),
\end{equation*}
with confidence at least $1 - \delta.$
\label{th:main}
\end{theorem}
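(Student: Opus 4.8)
The strategy is to combine the deterministic contraction of the idealistic process $K_{\mathcal{T}}^t$ (Theorem \ref{thm:contractionK}) with the probabilistic control of the learning step $\Delta k^t$ (Proposition \ref{prop:proba}), iterating over time and using a union bound to pay the confidence cost only once. Write $k^{t+1} = \Delta k^{t+1} + K_{\mathcal{T}}^{t+1}$ with $K_{\mathcal{T}}^{t+1} = \Lambda^t k^t$, and let $d_t := d(k^t, \MM_\mathcal{F})$. The key recursion I want is of the form
\begin{equation*}
    d_{t+1} \leq (1 - \underline m)\, d_t + E_{t+1},
\end{equation*}
where $E_{t+1}$ is the learning error at step $t+1$, i.e.\ the distance between the true least-squares target $f_{\rho^{i,t}}$ (which, as computed just above, equals the $i$-th component of $\Lambda^t k^t = K_{\mathcal{T}}^{t+1}$) and the empirical minimizer $k_i^{t+1} = f_{S_i^t}$. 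Theorem \ref{thm:contractionK} gives the contraction factor $1 - \underline m$ on the $K_{\mathcal{T}}^{t+1}$ piece, since $d(K_{\mathcal{T}}^{t+1}, k^t_\MM) \leq (1-\underline m) d(k^t, k^t_\MM) = (1-\underline m) d_t$ and $k^t_\MM \in \MM_\mathcal{F}$ so $d(K_{\mathcal{T}}^{t+1},\MM_\mathcal{F}) \leq (1-\underline m) d_t$; a triangle inequality then absorbs $\Delta k^{t+1}$ into the additive term $E_{t+1}$.

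\textbf{Controlling the learning error uniformly in $t$.} For each fixed $t$, Proposition \ref{prop:proba} applied with $\rho = \rho^{i,t}$ and each individual $i$ gives, for any $\eta_t > 0$,
\begin{equation*}
    \mathrm{Prob}\Big\{ \textstyle\sum_{i=1}^N \int_\mathcal{E} \|k_i^{t+1}(e) - f_{\rho^{i,t}}(e)\|_\mathcal{C}^2 \, \mathrm{d}\rho_\mathcal{E} \leq N\eta_t \Big\} \geq 1 - N\,\mathcal{N}\!\left(\mathcal{F}, \tfrac{\eta_t}{24M}\right) e^{-m_t \eta_t / (288 M^2)},
\end{equation*}
using the hypotheses that $\mathcal{F}$ is compact and convex (so $f_{\rho^{i,t}} = f^{i,t}_\mathcal{F}$, as established in the excerpt) and bounded (giving the constant $M$). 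This controls $E_{t+1} \leq C\sqrt{N\eta_t}$ for a suitable constant $C$ relating the $L^2$-type distance $d$ to the integrals above. I would pick $\eta_t$ shrinking geometrically, say $\eta_t = c_0 \alpha_*^{2t} d_0^2$ for some $\alpha_* \in (1-\underline m, 1)$ and constant $c_0$ to be tuned, so that $E_{t+1} \lesssim \alpha_*^t d_0$. To make the total confidence at least $1-\delta$, choose $m_t$ large enough that $N\,\mathcal{N}(\mathcal{F}, \eta_t/24M)\,e^{-m_t \eta_t/288M^2} \leq \delta/2^{t+1}$, i.e.\ solve for $m_t$ explicitly; this is where the dependence $m_t = m_t(\MM_\mathcal{F}, k^0, M, \alpha_*, \delta)$ enters (the covering number of $\MM_\mathcal{F}$ / $\mathcal{F}$ and $d_0 = d(k^0,\MM_\mathcal{F})$ both appear). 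A union bound over $t \geq 0$ then gives that, with probability at least $1-\delta$, the recursion $d_{t+1} \leq (1-\underline m) d_t + C'\alpha_*^t d_0$ holds for all $t$ simultaneously.

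\textbf{Closing the induction.} On the event of probability $\geq 1-\delta$, a standard discrete Gronwall / geometric-series argument on $d_{t+1} \leq (1-\underline m)d_t + C'\alpha_*^t d_0$ yields $d_t \leq \big( 1 + \tfrac{C'}{\alpha_* - (1-\underline m)} \big)\alpha_*^t d_0 =: A\,\alpha_*^t\, d_0$, which is exactly the claimed bound $d(k^t,\MM_\mathcal{F}) \leq A\alpha_*^t d(k^0,\MM_\mathcal{F})$. The constants $\alpha_* < 1$, $M$, $A \geq 0$ are thereby identified, and $m_t$ is the threshold extracted above.

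\textbf{Main obstacle.} The delicate point is the uniform-in-$t$ handling of the learning error: the sampling measure $\rho^{i,t}$ depends on the current (random) state $k^t$ through $\Lambda^t$, so the samples across time steps are not independent, and Proposition \ref{prop:proba} is a statement conditional on $\rho^{i,t}$. One must apply it conditionally on $\mathcal{F}_t := \sigma(k^0,\dots,k^t)$ — the bound $1 - N\mathcal{N}(\cdot)e^{-m_t\cdots}$ is uniform over all admissible $\rho^{i,t}$, hence holds conditionally — and then sum the complementary probabilities over $t$ via a union bound, so that the geometric decay of $\delta/2^{t+1}$ is what makes the series converge. A secondary technical nuisance is pinning down the constant $C$ (or $C'$) translating the covering-number-controlled $L^2$ quantity into the distance $d$ from Corollary \ref{cor:function}, and verifying that the hypothesis $f^{i,t}_\mathcal{F} = f_{\rho^{i,t}}$ genuinely applies so that the additive error is exactly $\|k_i^{t+1} - (K_{\mathcal{T}}^{t+1})_i\|$ rather than something involving an irreducible approximation error $\varepsilon(f_\mathcal{F})$.
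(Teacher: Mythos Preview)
Your proposal is correct and follows essentially the same route as the paper: triangle-split $d(k^t,\MM_\mathcal{F})$ into the learning error $d(k^t,K_\mathcal{T}^t)$ plus the contracted piece $d(K_\mathcal{T}^t,\MM_\mathcal{F})$, bound the former via Proposition~\ref{prop:proba} (with the norm-equivalence constant you call $C$ and the paper calls $C_A$), iterate, and union-bound over steps. The only cosmetic difference is that the paper keeps a single $\eta$ constant across all steps $1,\dots,t$ (chosen afterwards as $\eta=\alpha_*^{2t}d(k^0,\MM_\mathcal{F})^2/N$) and pays a factor $t$ in the union bound, whereas you let $\eta_s$ decay geometrically and allocate $\delta/2^{s+1}$ per step; both yield the same $m_t$ up to constants, and your discussion of the conditional application of Proposition~\ref{prop:proba} is in fact more careful than the paper's.
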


\begin{proof}

Let $d$ be the distance defined in corollary \ref{cor:function}.

We recall that the application $\mathcal{T} : \mathcal{F} \times \mathbb{N} \mapsto \mathcal{F}$ is defined by $\mathcal{T}(f,t) = \Lambda^t f$. Let $K^t_{\mathcal{T}} = \mathcal{T}(k^t,t)$. Notice in particular that the process $K^t_{\mathcal{T}}$ is different from $k^t$.
By the triangle inequality we have, using \eqref{eq:decompkt} and \eqref{def:idealProc}, that
\begin{equation*}
    d(k^t,\MM_\mathcal{F}) \leq d(k^t,K_{\mathcal{T}}^t)  + d(K_\mathcal{T}^t,\MM_\mathcal{F}).
\end{equation*}
The contractivity of the second term is yielded by Theorem \ref{thm:contractionK}: there exists $\alpha_t <1 $ such that
\begin{equation*}
    d(K_{\mathcal{T}}^t,\MM_\mathcal{F}) \leq \alpha_t d(k_{\mathcal{T}}^{t-1},\MM_\mathcal{F}).
\end{equation*}

Now, we need to estimate the other term. We recall that $\mathcal E$ is compact in $\RR^N$. By the compactness of $\mathcal{E}$ and $\mathcal{F}$ we have that
\begin{equation*}
    \sup_{f\in\mathcal{F}, e\in \mathcal{E}} \|f(e)\|_\mathcal{C} < \infty.
\end{equation*} 
In particular, there exists $M>0$ such that
\begin{equation*}
    \max_{(e,c)\in \mathcal{E}\times \mathcal{C}, f \in \mathcal{F}} \| f(e) - c \|_\mathcal{C} \leq M
\end{equation*}
Using Proposition \ref{prop:proba}, for each $\eta > 0$ and $i\in \{1,...,N\}$,

\begin{equation}
    \label{eq:concentrationKT}
    \mathrm{Prob}\left\{\int_\mathcal{E} \|k_i^t(e) - K_{\mathcal{T},i}^t(e)\|^2_\mathcal{C}\, \mathrm{d}\rho_i\leq \eta\right\} \geq 1 - \mathcal{N}(\mathcal{F},\frac{\eta}{24M})e^{\frac{-m\eta}{288 M^2}}.
\end{equation}

Let us now define the norm $\|\cdot\|_{\mathcal{F}^N_\rho}$ on $\mathcal{F}^N$ by:
\begin{equation*}
    \|F\|_{\mathcal{F}^N_\rho} = \sqrt{\sum_{i=1}^N\int_\mathcal{E} \|F_i(e)\|^2_\mathcal{C}\, \mathrm{d}\rho_i},\qquad \text{for all }F \in \mathcal{F}^N.
\end{equation*}
For all $1\leq i \leq N$, one has:
\begin{equation}
    \label{eq:ineqNeta}
    \int_\mathcal{E} \|k_i^t(e) - K_{\mathcal{T},i}^t(e)\|^2_\mathcal{C}\, \mathrm{d}\rho_i\leq \eta \implies \sum_{i=1}^N\int_\mathcal{E} \|k_i^t(e) - K_{\mathcal{T},i}^t(e)\|^2_\mathcal{C}\, \mathrm{d}\rho_i\leq N\eta.
\end{equation}
In particular,
\begin{equation*}
    \cup_{i=1}^N\left\{\int_\mathcal{E} \|k_i^t(e) - K_{\mathcal{T},i}^t(e)\|^2_\mathcal{C}\, \mathrm{d}\rho_i\leq \eta\right\} \subset \{\|k^t - K_{\mathcal{T}}^t\|_{\mathcal{F}^N_\rho}\leq N\eta\}.
\end{equation*}
Thus, gathering \eqref{eq:concentrationKT} and \eqref{eq:ineqNeta}, and using the convexity of the exponential,
\begin{align}
    \mathrm{Prob}\left\{\|k^t - K_{\mathcal{T}}^t\|_{\mathcal{F}^N_\rho}\leq N\eta\}\right\} &\geq \mathrm{Prob}\left\{\cup_{i=1}^N\{\int_\mathcal{E} \|k_i^t(e) - K_{\mathcal{T},i}^t(e)\|^2_\mathcal{C}\, \mathrm{d}\rho_i\leq \eta\}\right\} \notag \\
    &\geq (1 - \mathcal{N}(\mathcal{F},\frac{\eta}{24M})e^{\frac{-m\eta}{288 M^2}})^N \notag \\
    &\geq 1 - N\mathcal{N}(\mathcal{F},\frac{\eta}{24M})e^{\frac{-m\eta}{288 M^2}}. \label{eq:probaBiggerThan}
\end{align}

Let $d_{\mathcal{F}^N_\rho}$ be the distance on $\mathcal{F}^N$ defined by the norm $\|\cdot\|_{\mathcal{F}^N_\rho}$. For all f and g in $\mathcal{F}^N$, we have:
\begin{align*}
    d_{\mathcal{F}^N_\rho}(f,g)^2 = {\sum_{i=1}^N\int_\mathcal{E} \|f_i(e)-g_i(e)\|^2_\mathcal{C}\, \mathrm{d}\rho_i} = {\int_\mathcal{E} \|f(e)-g(e)\|_A^2\, \mathrm{d}\rho_i},
\end{align*}
with 
\begin{equation*}
    \|x\|_A^2 = \sum_{i=1}^N \|x_i\|^2_\mathcal{C} \quad \forall x \in (\mathbb{R}^l)^N.
\end{equation*}
For all f and g in $\mathcal{F}^N$ we also have:
\begin{align*}
    d(f,g)^2 &= {\int_\mathcal{E} \sum_{i=1}^l \|((f_1(e)-g_1(e))_l, \ldots,(f_N(e)-g_N(e))_l)\|^2_\mathcal{C}\, \mathrm{d}\rho_i}\\
    &= {\int_\mathcal{E} \|f(e)-g(e)\|_B^2\, \mathrm{d}\rho_i},
\end{align*}
with 
\begin{equation*}
    \|x\|_B^2 = \sum_{i=1}^l \|((x_1(e))_l, \ldots,(f_N(e)-g_N(e))_l)\|^2, \quad \forall x \in (\mathbb{R}^l)^N.
\end{equation*}

All the norm being equivalent on $(\mathbb{R}^l)^N$, there exist $C_A'$ and $C_A$ such that
\begin{equation*}
    C_A' \|x\|_B \leq \|x\|_A \leq C_A \|x\|_B, \quad \forall x\in (\mathbb{R}^l)^N.
\end{equation*}
Hence,
\begin{equation*}
    C_A' d(f,g) \leq d_{\mathcal{F}^N(f,g)} \leq C_A d(f,g), \quad \forall lf,g\in \mathcal{F}.
\end{equation*}
Using \eqref{eq:probaBiggerThan} with confidence at least $1 - N\mathcal{N}(\mathcal{F},\frac{\eta}{24M})e^{\frac{-m\eta}{288 M^2}}$, we finally have
\begin{equation*}
    d(k^t,\MM_\mathcal{F}) \leq C_A \sqrt{N\eta} + \alpha_td(k^{t-1},\MM_\mathcal{F}).
\end{equation*}

Iterating on the discrete times, one has with confidence at least $1 - t N\mathcal{N}(\mathcal{F},\frac{\eta}{24M})e^{\frac{-m\eta}{288 M^2}}$ that
\begin{equation*}
    d(k^t,\MM_\mathcal{F}) \leq C_A \sqrt{N\eta}\left(\sum_{i=0}^{t-1} \prod_{j=1}^i\alpha_j \right) + \prod_{i=1}^t\alpha_i d(k^{0},\MM_\mathcal{F}).
\end{equation*}
Let $\alpha_* = \max_{i = 0,\ldots, N} \max_{t} \alpha_i(t)$. According to Lemma \ref{lemma:upperBound}, one has $\alpha_* < 1$, yielding that 
\begin{align*}
    d(k^t,\MM_\mathcal{F}) &\leq C_A \sqrt{N\eta}\left(1 + \alpha_ + \ldots + \alpha_*^{t-1}\right) + \alpha_*^t d(k^{0},\MM_\mathcal{F}).\\
    &\leq \frac{C_A}{1-\alpha_*} \sqrt{N\eta} + \alpha_*^t d(k^{0},\MM_\mathcal{F}).
\end{align*}
Thus, for any $0<\delta<1$, choosing the parameter $m$ such that
\begin{equation}
    \label{eq:mBiggerThan}
    m \geq \frac{288M^2}{\eta}\left(\ln{\left(\frac{t N\mathcal{N}(\mathcal{F},\frac{\eta}{24M}}{\delta}\right)}\right),
\end{equation}
yields with confidence at least $1-\delta$ that
\begin{equation*}
    d(k^t,\MM_\mathcal{F}) \leq \frac{C_A}{1-\alpha_*} \sqrt{N\eta} + \alpha_*^t d(k^{0},\MM_\mathcal{F}).
\end{equation*}
Taking $\eta = {\alpha_*^{2t}d(k^0,\MM_\mathcal{F})^2}/{N}$ finishes the proof.
\end{proof}

\begin{rem}
 When time $t$ goes to infinity, so does the number of sample $m_t$ needed for the convergence in Theorem \ref{th:main} to occur. 

Indeed, using Section 7.1 of \cite{CuckerSmale2004}, there exists $C_\mathcal{F}>0$ and $a>0$ such that 
\begin{equation*}
    \ln{\mathcal{N}(\mathcal{F},\epsilon)}\leq C_\mathcal{F}\left(\frac{1}{\epsilon}\right)^a.
\end{equation*}
Plugging this into \eqref{eq:mBiggerThan} yields that 
\begin{align*}
    m_t &\leq \frac{288NM^2}{(1-\alpha_*)^2\alpha_*^{2t}d(k^0,\MM_\mathcal{F})^2} \ln{\left(tNC_\mathcal{F}\left(\frac{24NM}{(1-\alpha_*)^2\alpha_*^{2t}d(k^0,\MM_\mathcal{F})^2}\right)^a+\ln{\left(\frac{1}{\delta}\right)}\right)}\\
   &\leq \frac{288NM^2}{(1-\alpha_*)^2\alpha_*^{2t}d(k^0,\MM_\mathcal{F})^2} \left(\ln{(tN)}+C_\mathcal{F}\left(\frac{24NM}{(1-\alpha_*)^2\alpha_*^{2t}d(k^0,\MM_\mathcal{F})^2}\right)^a+\ln{\left(\frac{1}{\delta}\right)}\right)\\
\end{align*}
Choosing appropriately $\delta$ as a function of $t$, one can show that $f^t$ tends to $\MM_\mathcal{F}$ almost surely. 
One can then define the minimal sampling size $m(t)$ by
\begin{align*}
    m(t) & = \frac{288NM^2}{(1-\alpha_*)^2\alpha_*^{2t}d(k^0,\MM_\mathcal{F})^2} \left(\ln{(t^2N)}+C_\mathcal{F}\left(\frac{24NM}{(1-\alpha_*)^2\alpha_*^{2t}d(k^0,\MM_\mathcal{F})^2}\right)^a\right),
\end{align*}
which tends to $ +\infty$ when $t  \to +\infty$.

\end{rem}

\begin{corollary}
    Let $f_m^t$ be the process at the  $t$ when the size of the sample in the dynamics is $m$. One has that
    \begin{equation*}
        \sup_{\epsilon>0} \lim_{t\rightarrow\infty} \mathrm{Prob}\left\{d(f^t_{m(t)},\MM_\mathcal{F})\leq \epsilon\right\} = 1
    \end{equation*}
\end{corollary}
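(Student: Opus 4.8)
The plan is to derive the corollary from the quantitative estimate of Theorem~\ref{th:main} by letting the confidence parameter depend on time and decay to zero, while keeping the sample size fixed to the explicit threshold $m(t)$ introduced in the remark preceding the statement. Concretely, fix $\epsilon > 0$. Theorem~\ref{th:main} provides constants $\alpha_* < 1$, $M > 0$, $A \geq 0$ such that, for every $t$ and every $0 < \delta < 1$, as soon as $m \geq m_t(\MM_\mathcal{F}, k^0, M, \alpha_*, \delta)$ one has $d(k^t, \MM_\mathcal{F}) \leq A\,\alpha_*^t\, d(k^0, \MM_\mathcal{F})$ with probability at least $1 - \delta$. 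Since $\alpha_* < 1$ by Lemma~\ref{lemma:upperBound}, the bound $A\,\alpha_*^t\, d(k^0, \MM_\mathcal{F})$ tends to $0$, so there is $t_\epsilon$ with $A\,\alpha_*^t\, d(k^0, \MM_\mathcal{F}) \leq \epsilon$ for all $t \geq t_\epsilon$; for such $t$ the event $\{d(k^t,\MM_\mathcal{F}) \leq A\,\alpha_*^t\, d(k^0, \MM_\mathcal{F})\}$ is contained in $\{d(k^t,\MM_\mathcal{F}) \leq \epsilon\}$.

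The key step is then to pick $\delta = \delta_t \to 0$ (for instance $\delta_t = 1/t$) and to check that the sample size $m(t)$ from the remark does satisfy $m(t) \geq m_t(\MM_\mathcal{F}, k^0, M, \alpha_*, \delta_t)$. This amounts to comparing $m(t)$ with the explicit requirement \eqref{eq:mBiggerThan}, after inserting the covering-number bound $\ln \mathcal{N}(\mathcal{F}, s) \leq C_\mathcal{F}\, s^{-a}$; the point is simply that the $\ln(t^2 N)$ term of $m(t)$ absorbs $\ln\big(tN/\delta_t\big) = \ln(tN) + \ln(1/\delta_t)$, which is exactly why $\delta_t$ is not allowed to decay too fast. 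Granting this, for $m = m(t)$ and $t \geq t_\epsilon$ we get $\mathrm{Prob}\{d(f^t_{m(t)}, \MM_\mathcal{F}) \leq \epsilon\} \geq 1 - \delta_t$, hence $\lim_{t\to\infty}\mathrm{Prob}\{d(f^t_{m(t)},\MM_\mathcal{F}) \leq \epsilon\} = 1$; since $\epsilon > 0$ was arbitrary, the supremum over $\epsilon > 0$ is also $1$.

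I expect the bookkeeping in the second step to be the main obstacle: one has to reconcile the prescribed, $\delta$-free sample size $m(t)$ with the $t$- and $\delta_t$-dependent threshold of Theorem~\ref{th:main}, which pins down how slowly $\delta_t$ must go to zero. A secondary point requiring care is the uniformity in $t$ of the auxiliary constants $\alpha_t$ and $C_A$ appearing in the iteration inside the proof of Theorem~\ref{th:main} — this uniformity, which makes $\alpha_* = \sup_t \alpha_t < 1$ and keeps the geometric series of accumulated sampling errors convergent, is precisely what Lemma~\ref{lemma:upperBound} and the equivalence of norms on $(\RR^l)^N$ deliver. If one wanted the stronger almost-sure convergence of $d(f^t_{m(t)}, \MM_\mathcal{F})$ to $0$ rather than convergence in probability, one would instead take a summable $\delta_t$ (say $\delta_t = t^{-2}$) and apply the Borel--Cantelli lemma, at the cost of slightly enlarging $m(t)$.
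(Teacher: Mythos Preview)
Your proposal is correct and follows essentially the same approach as the paper: fix $\epsilon>0$, use that $A\alpha_*^t d(k^0,\MM_\mathcal{F})<\epsilon$ for large $t$, take $\delta=1/t$, and conclude $\mathrm{Prob}\{d(f^t_{m(t)},\MM_\mathcal{F})\leq\epsilon\}\geq 1-1/t$. Your additional discussion of why $m(t)$ (with its $\ln(t^2N)$ term) accommodates the choice $\delta_t=1/t$, and of the Borel--Cantelli route to almost-sure convergence, goes beyond what the paper spells out but is consistent with it.
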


\begin{proof}
Let $\epsilon>0$. For all $t$ big enough, one has
\begin{equation*}
    A \alpha_*^td(f^0_{m(t)},\MM_\mathcal{F}) < \epsilon.
\end{equation*}
Taking $\delta = \frac{1}{t}$ yields 
\begin{equation*}
    \mathrm{Prob}\left\{d(f^t_{m(t)},\MM_\mathcal{F})\leq \epsilon\right\} \geq 1 - \frac{1}{t}.
\end{equation*}
\end{proof}

\section{Numerical simulations}
\label{sec:numRes}
%    \Tom{TODO: motivations : 1/ Illustration du résultat de convergence quand $\tau=0$ puis, et c'est le plus important, des simu où $\tau \neq 0$. }
%    
  Let us now both illustrate the mathematical results of the paper, such as Theorem \ref{th:main}, and show that some generalizations also hold when individual learning is possible ($\tau>0$). Individual learning allows innovations, new experiences and observations, and original conceptualizations  which make possible the evolution of knowledge for both the individuals and the population. By analogy, individual learning plays the same role for the evolution of knowledge than genetic mutations for the biological evolution of species \cite{Mesoudi2011}.
  
    \subsection{Illustration of the main theorem}
{Our model aims to be used by theoretical anthropologists. To show its usefulness, we illustrate the results of our main theorem in specific cases.}

\medskip

\paragraph{Test 1. Impact of self-inertia.}
   As a first numerical test, we aim to illustrate Theorem \ref{th:main}.  Let $\mathcal{E} = \{1,...,5\}$ and $\mathcal{C} = [-10,10]$. We consider a relationship between two individuals (labeled $1$ and $2$). The structure matrix \ref{def:Gamma} is given by
\begin{equation*}
    \Gamma = \begin{pmatrix}
    \alpha & 1 - \alpha \\
    1 - \alpha & \alpha
    \end{pmatrix},
\end{equation*}
where $\alpha\in[0,1].$

The parameter $\alpha$ can be interpreted as cognitive (see Remark \ref{rem:inertia}) or self-inertia. The higher the $\alpha$, the less individuals' knowledge-like functions change along the dynamics. As likelihood landscape (\ref{def:likelihoodLandscape}), we take $L(e,c) = 1$ for all $e \in \mathcal{E}$ and $c\in \mathcal{C}\backslash\{0\}$, and take $c_{\min} = 0.1$. 
We define $\mathcal{F}$ as the set of continuous functions from $\mathcal{E}$ to $\mathcal{C}$ so $\mathcal{F}$ is convex. As $\mathcal{E}$ contains a finite number of elements and $\mathcal{C}$ is compact, then $\mathcal{F}$ is compact. We set $\tau = 0$ so the dynamics is only driven by social learning.

When $\alpha$ varies in $(0,1)$, all the hypotheses of Theorem \ref{th:main} are met (even though, strictly speaking, we do not illustrate exactly the theorem because we cannot compute $m_t$). Our numerical simulations show that according to this result the population converges to a common shared knowledge.

At the initial state, the knowledge-like functions of individuals $1$ and $2$ are   $k_1^0$ and $k_2^0$, respectively, given by 
\begin{equation*}
    k_1^0(e) = 2 \quad \forall e\in \mathcal{E}, \qquad
    k_2^0(e) = 6 \quad \forall e \in \mathcal{E}.
\end{equation*}

Let $d$ be the distance defined in  Corollary \ref{cor:function}.
By using numerical simulations we follow the evolution of $d(k^t,\MM_\mathcal{F})$ through time for different values of the parameter $\alpha$. We ran $100$ simulation replicates. The mean dynamics is presented in Figure \ref{fig:d_self_inertia}.

\begin{figure}[ht]
\centering
\input{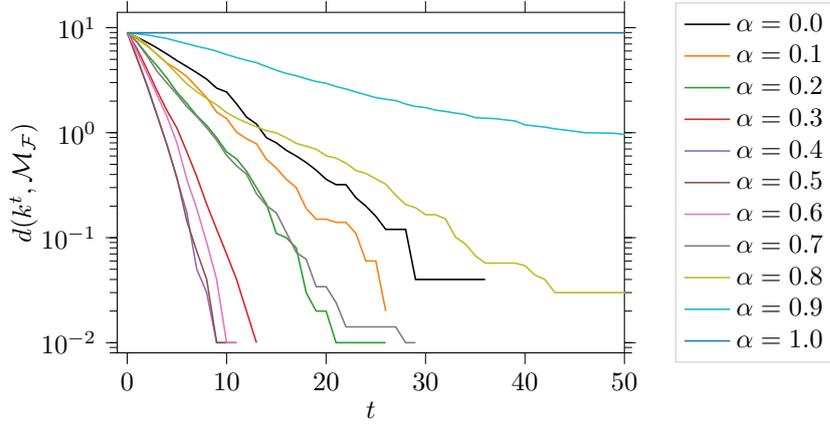}
\caption{\label{fig:d_self_inertia}
\textbf{Test 1.} Evolution of the distance between $k^t$ and the set $\mathcal{F}$ through time.}
\end{figure}

When $\alpha \neq 1$ the population rapidly converges  to a common shared knowledge (Fig. \ref{fig:d_self_inertia}) as predicted by Theorem \ref{th:main}. We notice that this convergence is exponential, as expected given that the process is driven by an inhomogeneous Markov chain.
We notice that the convergence is faster when $\alpha = 0.5$.

When $\alpha = 1$ the matrix does not respect the hypothesis of Theorem \ref{th:main} since $\gamma_{12}=\gamma_{21}=0$. It corresponds to the case where the individuals do not communicate with each other. Thus individuals knowledge-like functions do not vary through time, and the process does not converge towards a common shared knowledge.

\medskip
\paragraph{Test 2. A professor and its audience.}

Now let us consider a population of 5 individuals: 1 professor ($1$) and 4 students ($2$, $3$, $4$, $5$). We keep the same setting as previously, namely no individual learning ($\tau = 0$), since it is a purely teaching situation. 

Let the structure matrix be
\begin{equation*}
        \Gamma = \begin{pmatrix}1&0.01&0.01&0.01&0.01\\1&0.1&0.1&0.1&0.1\\1&0.1&0.1&0.1&0.1\\1&0.1&0.1&0.1&0.1\\1&0.1&0.1&0.1&0.1\end{pmatrix}.
    \end{equation*}
At the initial state, knowledge-like functions are defined by:
\begin{equation*}
    k_1^0(e) = 5 \quad \forall e\in \mathcal{E},
\end{equation*}
and 
\begin{equation*}
    k_i^0(e) = 1 \quad \forall e\in \mathcal{E}, \forall i \in \{2,\ldots,5\},
\end{equation*}
so at the initial time, all students have the same knowledge.

We consider two cases. First, the likelihood landscape is assumed constant. Second, it is considered concave assuming the concept $c$ is fixed: we set for all $(e,c) \in \mathcal{E} \times\mathcal{C}\backslash\{0\}$
\begin{equation*}
L(e,c) = e^{\frac{(e-6)^2}{10}},
\end{equation*}
so the professor has a knowledge-like function that is more likely than that one of her students.

We call $k_{eq}$ the common shared knowledge at the equilibrium. We define $\Delta_i$ as the distance between the initial knowledge of individual $i$ and the common shared knowledge. We have
\begin{equation*}
    \Delta_i = d_\mathcal{C}(k_i^0,k_{eq}),
\end{equation*}
with $d_\mathcal{C}$ the distance induced by the inner product on $\mathcal C$.

We ran 100 numerical simulations as previously. Results are shown in Figures \ref{fig:d_professor} and \ref{fig:Delta_professor}.

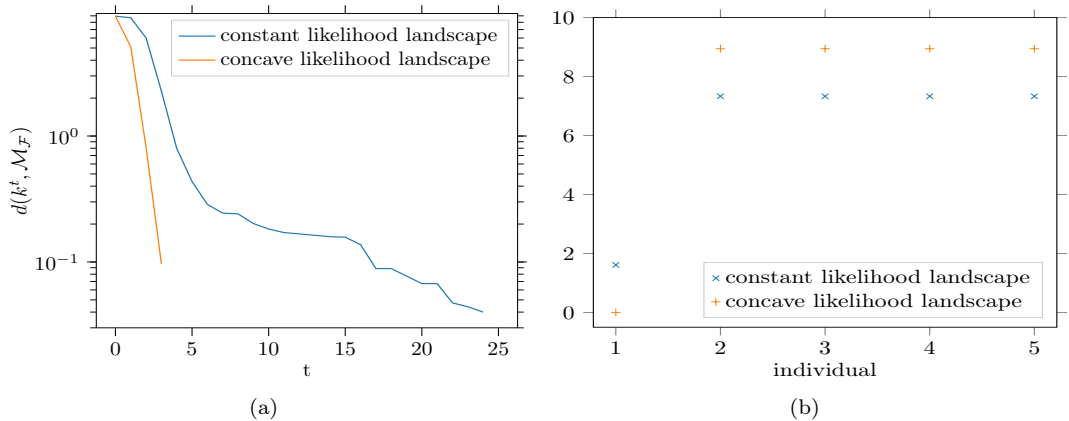
\begin{figure}[ht]
    \centering
    \subfigure[]{\resizebox{7cm}{5cm}{% This file was created by tikzplotlib v0.8.2.
\begin{tikzpicture}

\definecolor{color0}{rgb}{0.12156862745098,0.466666666666667,0.705882352941177}
\definecolor{color1}{rgb}{1,0.498039215686275,0.0549019607843137}

\begin{axis}[
legend cell align={left},
legend style={draw=white!80.0!black},
tick align=outside,
tick pos=both,
x grid style={white!69.01960784313725!black},
xlabel={t},
xmin=-1.25, xmax=26.25,
xtick style={color=black},
y grid style={white!69.01960784313725!black},
ylabel={$d(k^t,\mathcal{M}_\mathcal{F})$},
ymin=0.03, ymax=9.39148550549913,
ytick style={color=black},
ymode=log
]
\addplot [semithick, color0]
table {%
0 8.94427190999917
1 8.68827346009904
2 6.01127827227921
3 2.27002232510496
4 0.797497500304315
5 0.435184586967827
6 0.286400503629287
7 0.244281462044708
8 0.24110308959275
9 0.202007498706004
10 0.182779016934951
11 0.171156240333267
12 0.16706742302257
13 0.162925287398839
14 0.158783151775109
15 0.157308383527285
16 0.137308383527285
17 0.0886370330515627
18 0.0886370330515627
19 0.0773205080756888
20 0.0673205080756888
21 0.0673205080756888
22 0.0473205080756888
23 0.0441421356237309
24 0.04
25 0
};
\addlegendentry{constant likelihood landscape}
\addplot [semithick, color1]
table {%
0 8.94427190999917
1 5.11468567414611
2 0.809705627484771
3 0.0965685424949238
4 0
5 0
6 0
7 0
8 0
9 0
10 0
11 0
12 0
13 0
14 0
15 0
16 0
17 0
18 0
19 0
20 0
21 0
22 0
23 0
24 0
25 0
};
\addlegendentry{concave likelihood landscape}
\end{axis}

\end{tikzpicture}}\label{fig:d_professor}}
    \subfigure[]{\resizebox{7cm}{5cm}{% This file was created by tikzplotlib v0.8.2.
\begin{tikzpicture}

\definecolor{color0}{rgb}{0.12156862745098,0.466666666666667,0.705882352941177}
\definecolor{color1}{rgb}{1,0.498039215686275,0.0549019607843137}

\begin{axis}[
legend cell align={left},
legend style={at={(0.97,0.03)}, anchor=south east, draw=white!80.0!black},
tick align=outside,
tick pos=both,
x grid style={white!69.01960784313725!black},
xlabel={individual},
xmin=0.785340770518991, xmax=5.21465922948101,
y grid style={white!69.01960784313725!black},
ymin=-0.5, ymax=10,
]
\addplot [only marks, mark=x, draw=color0, fill=color0]
table{%
1 1.61357428350635
2 7.33069762649282
3 7.33069762649282
4 7.33069762649282
5 7.33069762649282
};
\addlegendentry{constant likelihood landscape}
\addplot [only marks, mark=+, draw=color1, fill=color1]
table{%
1 0
2 8.94427190999917
3 8.94427190999917
4 8.94427190999917
5 8.94427190999917
};
\addlegendentry{concave likelihood landscape}
\end{axis}

\end{tikzpicture}}\label{fig:Delta_professor}}
    \caption{\textbf{Test 2.}(a) Evolution of the distance between $k^t$ and the set $\mathcal{F}$ through time. (b) $\Delta_i$ for each individual $i$ in the population. The blue crosses represent the case where the likelihood is constant, and the orange pluses show the case where the likelihood is concave.}
\end{figure}

Figure \ref{fig:d_professor} shows the evolution of the distance to space $\mathcal{F}$ with time. In both cases, whether the likelihood landscape is fixed or concave, the population rapidly converges to a common shared knowledge. When the likelihood landscape is concave, the professor has a strong influence on her students and the convergence to a common shared knowledge is faster. 

Figure \ref{fig:Delta_professor} shows the values of $\Delta_\cdot$. In both cases, the common shared knowledge is close to the professor's initial one. This common shared knowledge is farther from the students' initial knowledge than from the professor's. When the likelihood landscape favors the professor influence, the common shared knowledge is closer to the initial professor knowledge.

    \subsection{Creation of knowledge}
    
  We now consider the case where individual learning is present, namely $\tau >0$. Although we couldn't prove a convergence result for this case, we can still use numerical approaches when the parameters of the model do not allow analytical resolution.

  \medskip
  \paragraph{Test 3. Creation of knowledge among interacting individuals.} In this part we set $\mathcal E =\{1,...,25\}$ and $\mathcal C = \mathbb{R}$. We define the likelihood landscape as 
 \begin{equation*}
        L(e, c) = \left\{ \begin{array}{ll}
		\frac{1}{2} & \mbox{if } c = 0,\\
		\exp{-(x-1)^2}	 & \mbox{otherwise},\\
	\end{array}
    \right.
\end{equation*}
such that the function $1_{\mathcal{F}}$ is defined as:
\begin{equation*}
    \forall e \in E, 1_{\mathcal{F}}(e) = 1,
\end{equation*}
is the most likely function. We consider a population of ten individuals and we set a initial state where $K^0 = (0_{\mathcal{F}}, ...,0_{\mathcal{F}})$. Let $\Gamma$ be the square matrix of size $N$ full of $1$. Thus at the initial state,  individuals are "newborn", that is, they do have not conceptualized any experiences. We investigate convergence of knowledge  towards the function $1_{\mathcal{F}}$ and its dynamics by simulation runs.

We define the relative entropy (RE) of the population as 
\begin{equation}
    \text{RE(t)} = - \frac{1}{N} \sum_{i = 1}^N d_{\mathcal{F}}(k_i^t,1_{\mathcal{F}}),
\end{equation}
with for all $f$, $g$ in $\mathcal{F}$,
\begin{equation}
    d_{\mathcal{F}}(f,g) = \sqrt{\sum_{e=1}^{N_E} \frac{(f(e)-g(e))^2}{N_E}}.
\end{equation}
When every individual in the population has $1_{\mathcal{F}}$ as knowledge-like function, the relative entropy is maximal and equals 0. We use the relative entropy as a measure of knowledge in the population \textit{i.e.} the higher the relative entropy, the more likely the individuals' knowledge. This allows us to quantify the effect of parameters on the evolution of knowledge.
Figure \ref{fig:RE_ev}  shows that the relative entropy increases with time.

\begin{figure}
    \centering
    \resizebox{9cm}{5cm}{% This file was created by tikzplotlib v0.8.2.
\begin{tikzpicture}

\definecolor{color0}{rgb}{0.12156862745098,0.466666666666667,0.705882352941177}

\begin{axis}[
tick align=outside,
tick pos=both,
x grid style={white!69.01960784313725!black},
xlabel={t},
xmin=-1250, xmax=26250,
xtick style={color=black},
y grid style={white!69.01960784313725!black},
ylabel={RE$(t)$},
ymin=-1.049945470, ymax=0.048854889480199,
ytick style={color=black}
]
\addplot [semithick, color0]
table {%
x y
0 -1
100 -0.934268419240421
200 -0.911286488449256
300 -0.883738074906486
400 -0.865015346427878
500 -0.835956238527923
600 -0.781265046075365
700 -0.764959659800397
800 -0.721921015877556
900 -0.724506238191151
1000 -0.701898343758877
1100 -0.683408188920054
1200 -0.671523070554184
1300 -0.645313596941414
1400 -0.645115651116012
1500 -0.608993003057616
1600 -0.62072663883469
1700 -0.580406760829873
1800 -0.579547099215114
1900 -0.569902646440715
2000 -0.549575786849573
2100 -0.559169045656163
2200 -0.547211368930378
2300 -0.52709908949842
2400 -0.518047531070436
2500 -0.499555780246023
2600 -0.480828564500994
2700 -0.463682448870824
2800 -0.451566263708635
2900 -0.435108734200811
3000 -0.420846533128241
3100 -0.409896727876108
3200 -0.413796079889011
3300 -0.385924629750861
3400 -0.380885834443576
3500 -0.360643579140288
3600 -0.376607093740463
3700 -0.343834946234218
3800 -0.328508086332861
3900 -0.298066237181222
4000 -0.294965406973774
4100 -0.302700516151538
4200 -0.297677951141994
4300 -0.290959353809312
4400 -0.272778541503929
4500 -0.292083112201729
4600 -0.274713533664202
4700 -0.268651389847664
4800 -0.247579250615475
4900 -0.239154680747619
5000 -0.217045408783007
5100 -0.211580453395134
5200 -0.208554217037661
5300 -0.182730775575824
5400 -0.16631752923011
5500 -0.16096497269439
5600 -0.155694962321215
5700 -0.173321506950872
5800 -0.159105860832693
5900 -0.163838809934772
6000 -0.165140914747158
6100 -0.154811484925143
6200 -0.157436952965722
6300 -0.153144965363207
6400 -0.155368672388287
6500 -0.147422255290245
6600 -0.142329481108616
6700 -0.151463694254237
6800 -0.172453215373156
6900 -0.171275068409997
7000 -0.172398553021751
7100 -0.166049623418758
7200 -0.139512333591678
7300 -0.127104496847621
7400 -0.133673966799998
7500 -0.127354890189364
7600 -0.135423522741045
7700 -0.122450284845457
7800 -0.116344287968334
7900 -0.122643155831337
8000 -0.101215146126482
8100 -0.0975193136623344
8200 -0.0937503799869673
8300 -0.0962473448195604
8400 -0.102145457124856
8500 -0.0902237392250528
8600 -0.0903448957106575
8700 -0.0990088191249674
8800 -0.0907941809583688
8900 -0.0860146593671266
9000 -0.0866204940340296
9100 -0.0792309420625507
9200 -0.080433603658422
9300 -0.0813624848854838
9400 -0.0859013723444297
9500 -0.0915227887552324
9600 -0.0823834463716252
9700 -0.0792087596066443
9800 -0.0672818224620071
9900 -0.0751863804911561
10000 -0.0754784883178257
10100 -0.0699734738717651
10200 -0.0778759876107529
10300 -0.0687067616088582
10400 -0.0670435511930941
10500 -0.0657128831959094
10600 -0.061689997089925
10700 -0.0538231697859909
10800 -0.0463529531085176
10900 -0.0457326399646088
11000 -0.0403768926601289
11100 -0.0394729621891155
11200 -0.0343710650511289
11300 -0.0375964599974587
11400 -0.0338789548054329
11500 -0.0368546746363921
11600 -0.034596333566998
11700 -0.0403838160169986
11800 -0.0260784821179297
11900 -0.0309396854909959
12000 -0.0324801313730575
12100 -0.0292452821777341
12200 -0.0268386261759652
12300 -0.0245287514085859
12400 -0.025971293920381
12500 -0.0275365679058356
12600 -0.0212765783464551
12700 -0.0207369256224864
12800 -0.0216982059789009
12900 -0.0212710897038967
13000 -0.0210820265065006
13100 -0.0247250844438288
13200 -0.0198758214914059
13300 -0.017290168373423
13400 -0.021760075956532
13500 -0.0250080493664831
13600 -0.0161080259438215
13700 -0.0140703109844616
13800 -0.0106381855285254
13900 -0.00883940562661125
14000 -0.0297134608046915
14100 -0.00827644846710967
14200 -0.00949116020852211
14300 -0.00716674820125043
14400 -0.00743739159786567
14500 -0.00621293925088924
14600 -0.00826451523622895
14700 -0.00689263755937874
14800 -0.00807370407809672
14900 -0.0059224239333709
15000 -0.00754168064960963
15100 -0.00873681935712801
15200 -0.00720211745486501
15300 -0.00821943684772155
15400 -0.00868827364538424
15500 -0.0115266633909252
15600 -0.0140337721034486
15700 -0.0134368739065704
15800 -0.00643811456079428
15900 -0.00620426484847369
16000 -0.0092337617854485
16100 -0.00842288249552056
16200 -0.00964983686946558
16300 -0.00507197826764797
16400 -0.00720827250404584
16500 -0.00551042199582502
16600 -0.0124654836777357
16700 -0.0082252902180649
16800 -0.0054944237991189
16900 -0.00457565317973723
17000 -0.0110365467987884
17100 -0.00433898343891062
17200 -0.00300634512014573
17300 -0.00593541823992807
17400 -0.00284746433639273
17500 -0.00490640456110845
17600 -0.00609058426925986
17700 -0.0058350323927689
17800 -0.00496358302492131
17900 -0.0036519939688967
18000 -0.00775527365413372
18100 -0.00225956100377044
18200 -0.00557026084432158
18300 -0.00330310676520911
18400 -0.00169759909795556
18500 -0.00138457822184365
18600 -0.00242842339552386
18700 -0.00284301293970157
18800 -0.00127986004625294
18900 -0.00134690763471123
19000 -0.00385726430410771
19100 -0.0019594280094004
19200 -0.00308924546917852
19300 -0.0078311103746652
19400 -0.00109058144742954
19500 -0.00344419092320299
19600 -0.00335010512733102
19700 -0.00176237905744021
19800 -0.00367456330237712
19900 -0.00196641380594603
20000 -0.00406780078321375
20100 -0.00218838636193495
20200 -0.0116938115780037
20300 -0.00418619934522044
20400 -0.0069321509233816
20500 -0.00893957293712252
20600 -0.00124859474922159
20700 -0.00340125544799651
20800 -0.00532272855707437
20900 -0.0052256005608809
21000 -0.0025718446971112
21100 -0.00706508455876704
21200 -0.00687790244558056
21300 -0.00203229838192251
21400 -0.00552684241130594
21500 -0.00734792517953869
21600 -0.00582928329112765
21700 -0.00294869783580114
21800 -0.00809546215889478
21900 -0.00311597745279702
22000 -0.00255667505527318
22100 -0.00389959208217072
22200 -0.00354002066213887
22300 -0.00658042188064167
22400 -0.00147797664513913
22500 -0.00156468272447034
22600 -0.00203047519578133
22700 -0.00184055751137314
22800 -0.003464894510478
22900 -0.00246358592156281
23000 -0.00680737466921153
23100 -0.00157702877354992
23200 -0.0017275840460276
23300 -0.00580433545630617
23400 -0.00318438548597166
23500 -0.00216072211932912
23600 -0.00199162285257407
23700 -0.00434701728182789
23800 -0.00156029563923555
23900 -0.00353656124708077
24000 -0.00231203044795323
24100 -0.00608497942746034
24200 -0.00269288015052121
24300 -0.00269784776694034
24400 -0.00556543981834075
24500 -0.00329389830323101
24600 -0.00200870657298194
24700 -0.00348816881314451
24800 -0.0016656545140415
24900 -0.00183495556064686
25000 -0.00346761093608336
};
\end{axis}

\end{tikzpicture}}
    \caption{\label{fig:RE_ev} \textbf{Test 3.} Evolution of the relative entropy through time.}
\end{figure}
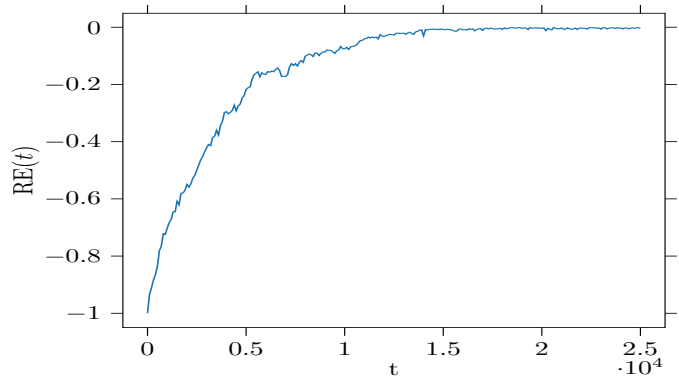

In simulations, the individual learning rate $\tau=0.02$. Individual learning results in new experiences and observations, while social learning promotes the spread of adequate conceptualizations. The combined effect of individual and social learning allows the population to evolve towards better solutions (Figure \ref{fig:knowledge_crea}). 

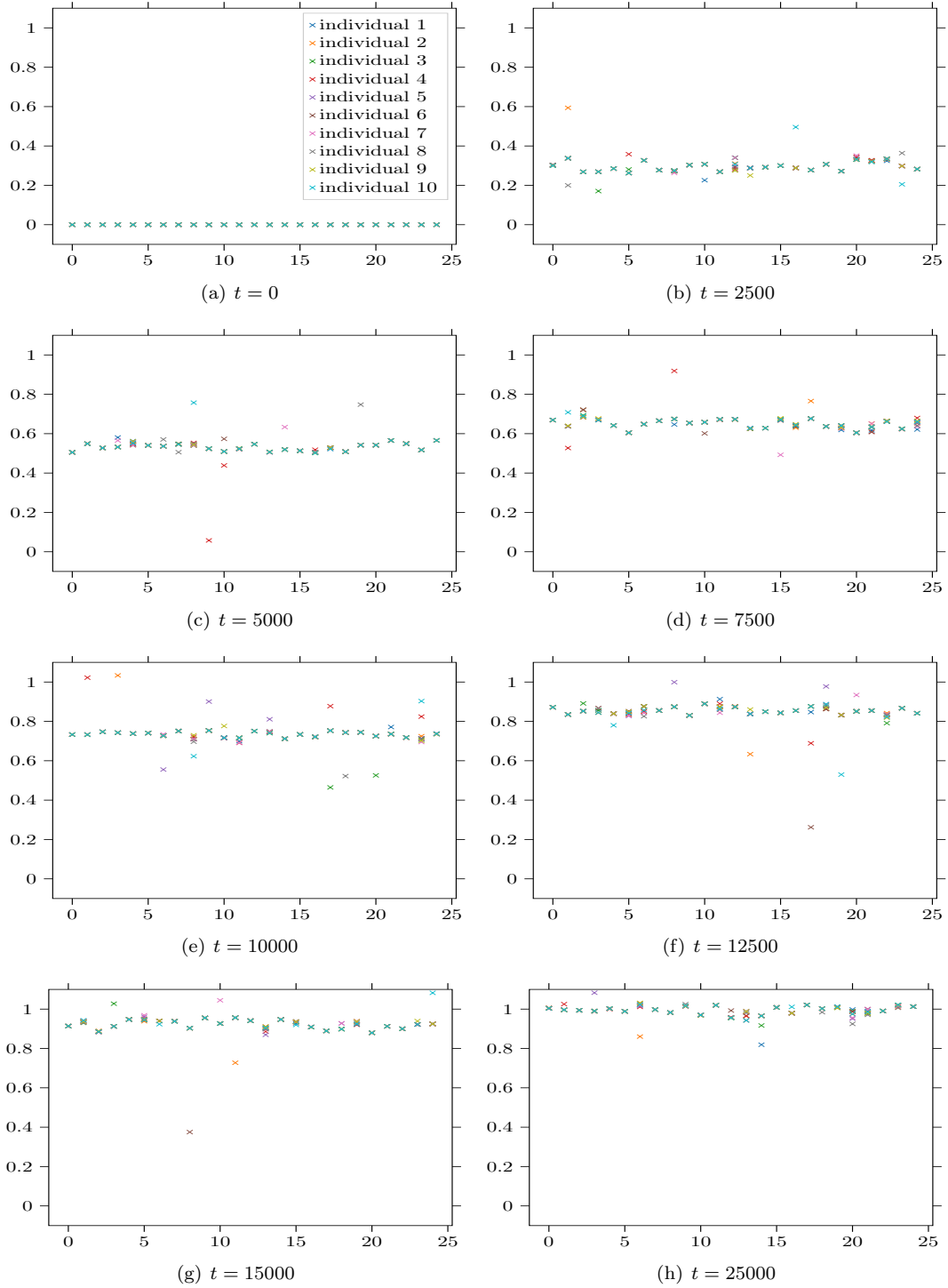
\begin{figure}
    \centering
    \subfigure[$t=0$]{\resizebox{7cm}{4cm}{% This file was created by tikzplotlib v0.8.2.
\begin{tikzpicture}

\definecolor{color0}{rgb}{0.12156862745098,0.466666666666667,0.705882352941177}
\definecolor{color1}{rgb}{1,0.498039215686275,0.0549019607843137}
\definecolor{color2}{rgb}{0.172549019607843,0.627450980392157,0.172549019607843}
\definecolor{color3}{rgb}{0.83921568627451,0.152941176470588,0.156862745098039}
\definecolor{color4}{rgb}{0.580392156862745,0.403921568627451,0.741176470588235}
\definecolor{color5}{rgb}{0.549019607843137,0.337254901960784,0.294117647058824}
\definecolor{color6}{rgb}{0.890196078431372,0.466666666666667,0.76078431372549}
\definecolor{color7}{rgb}{0.737254901960784,0.741176470588235,0.133333333333333}
\definecolor{color8}{rgb}{0.0901960784313725,0.745098039215686,0.811764705882353}

\begin{axis}[
legend cell align={left},
legend style={draw=white!80.0!black},
tick align=outside,
tick pos=both,
x grid style={white!69.01960784313725!black},
xmin=-1.28841079764563, xmax=25.2884107976456,
xtick style={color=black},
y grid style={white!69.01960784313725!black},
ymin=-0.1, ymax=1.1,
ytick style={color=black}
]
\addplot [only marks, mark=x,  mark=x, draw=color0, fill=color0]%colormap/viridis]
table{%
x                      y
0 0
1 0
2 0
3 0
4 0
5 0
6 0
7 0
8 0
9 0
10 0
11 0
12 0
13 0
14 0
15 0
16 0
17 0
18 0
19 0
20 0
21 0
22 0
23 0
24 0
};
\addlegendentry{individual 1}
\addplot [only marks, mark=x,  draw=color1, fill=color1]%colormap/viridis]
table{%
x                      y
0 0
1 0
2 0
3 0
4 0
5 0
6 0
7 0
8 0
9 0
10 0
11 0
12 0
13 0
14 0
15 0
16 0
17 0
18 0
19 0
20 0
21 0
22 0
23 0
24 0
};
\addlegendentry{individual 2}
\addplot [only marks, mark=x,  draw=color2, fill=color2]%colormap/viridis]
table{%
x                      y
0 0
1 0
2 0
3 0
4 0
5 0
6 0
7 0
8 0
9 0
10 0
11 0
12 0
13 0
14 0
15 0
16 0
17 0
18 0
19 0
20 0
21 0
22 0
23 0
24 0
};
\addlegendentry{individual 3}
\addplot [only marks, mark=x,  draw=color3, fill=color3]%colormap/viridis]
table{%
x                      y
0 0
1 0
2 0
3 0
4 0
5 0
6 0
7 0
8 0
9 0
10 0
11 0
12 0
13 0
14 0
15 0
16 0
17 0
18 0
19 0
20 0
21 0
22 0
23 0
24 0
};
\addlegendentry{individual 4}
\addplot [only marks, mark=x,  draw=color4, fill=color4]%colormap/viridis]
table{%
x                      y
0 0
1 0
2 0
3 0
4 0
5 0
6 0
7 0
8 0
9 0
10 0
11 0
12 0
13 0
14 0
15 0
16 0
17 0
18 0
19 0
20 0
21 0
22 0
23 0
24 0
};
\addlegendentry{individual 5}
\addplot [only marks, mark=x,  draw=color5, fill=color5]%colormap/viridis]
table{%
x                      y
0 0
1 0
2 0
3 0
4 0
5 0
6 0
7 0
8 0
9 0
10 0
11 0
12 0
13 0
14 0
15 0
16 0
17 0
18 0
19 0
20 0
21 0
22 0
23 0
24 0
};
\addlegendentry{individual 6}
\addplot [only marks, mark=x,  draw=color6, fill=color6]%colormap/viridis]
table{%
x                      y
0 0
1 0
2 0
3 0
4 0
5 0
6 0
7 0
8 0
9 0
10 0
11 0
12 0
13 0
14 0
15 0
16 0
17 0
18 0
19 0
20 0
21 0
22 0
23 0
24 0
};
\addlegendentry{individual 7}
\addplot [only marks, mark=x,  draw=white!49.80392156862745!black, fill=white!49.80392156862745!black]%colormap/viridis]
table{%
x                      y
0 0
1 0
2 0
3 0
4 0
5 0
6 0
7 0
8 0
9 0
10 0
11 0
12 0
13 0
14 0
15 0
16 0
17 0
18 0
19 0
20 0
21 0
22 0
23 0
24 0
};
\addlegendentry{individual 8}
\addplot [only marks, mark=x,  draw=color7, fill=color7]%colormap/viridis]
table{%
x                      y
0 0
1 0
2 0
3 0
4 0
5 0
6 0
7 0
8 0
9 0
10 0
11 0
12 0
13 0
14 0
15 0
16 0
17 0
18 0
19 0
20 0
21 0
22 0
23 0
24 0
};
\addlegendentry{individual 9}
\addplot [only marks, mark=x,  draw=color8, fill=color8]%colormap/viridis]
table{%
x                      y
0 0
1 0
2 0
3 0
4 0
5 0
6 0
7 0
8 0
9 0
10 0
11 0
12 0
13 0
14 0
15 0
16 0
17 0
18 0
19 0
20 0
21 0
22 0
23 0
24 0
};
\addlegendentry{individual 10}
\end{axis}

\end{tikzpicture}}\label{fig:knowledge_crea_0}}
    \subfigure[$t=2500$]{\resizebox{7cm}{4cm}{% This file was created by tikzplotlib v0.8.2.
\begin{tikzpicture}

\definecolor{color0}{rgb}{0.12156862745098,0.466666666666667,0.705882352941177}
\definecolor{color1}{rgb}{1,0.498039215686275,0.0549019607843137}
\definecolor{color2}{rgb}{0.172549019607843,0.627450980392157,0.172549019607843}
\definecolor{color3}{rgb}{0.83921568627451,0.152941176470588,0.156862745098039}
\definecolor{color4}{rgb}{0.580392156862745,0.403921568627451,0.741176470588235}
\definecolor{color5}{rgb}{0.549019607843137,0.337254901960784,0.294117647058824}
\definecolor{color6}{rgb}{0.890196078431372,0.466666666666667,0.76078431372549}
\definecolor{color7}{rgb}{0.737254901960784,0.741176470588235,0.133333333333333}
\definecolor{color8}{rgb}{0.0901960784313725,0.745098039215686,0.811764705882353}

\begin{axis}[
tick align=outside,
tick pos=both,
x grid style={white!69.01960784313725!black},
xmin=-1.28841079764563, xmax=25.2884107976456,
xtick style={color=black},
y grid style={white!69.01960784313725!black},
ymin=-0.1, ymax=1.1,
ytick style={color=black}
]
\addplot [only marks, mark=x,  draw=color0, fill=color0]%colormap/viridis]
table{%
x                      y
0 0.302370505195585
1 0.337477921408621
2 0.268399995443248
3 0.268867656800274
4 0.285295420721746
5 0.262482644192981
6 0.326616960881526
7 0.277340766901799
8 0.275098995850919
9 0.302651421679462
10 0.225925461659429
11 0.268862577679877
12 0.340286582997736
13 0.287916711383383
14 0.29204966012937
15 0.299968983298272
16 0.288256402764058
17 0.27748212880294
18 0.307227498093394
19 0.272062821109299
20 0.330991293328025
21 0.32738951547619
22 0.32455751121748
23 0.298138379952352
24 0.282344379826703
};
\addplot [only marks, mark=x,  draw=color1, fill=color1]%colormap/viridis]
table{%
x                      y
0 0.301627883156114
1 0.593752275304433
2 0.268399653289341
3 0.268836070659378
4 0.285295421195511
5 0.262491848631964
6 0.32668385860636
7 0.277101330409858
8 0.275098995850919
9 0.303017488671179
10 0.307103726076183
11 0.269076426865752
12 0.310300379608422
13 0.287916723434364
14 0.292079782541436
15 0.299968983298272
16 0.288256432890574
17 0.277561611267872
18 0.306918501055446
19 0.272056745791085
20 0.331204129982204
21 0.327202784222468
22 0.331359373481582
23 1.1118280314257
24 0.282344379826703
};
\addplot [only marks, mark=x,  draw=color2, fill=color2]%colormap/viridis]
table{%
x                      y
0 0.303855749274529
1 0.337472101753332
2 0.268399653289341
3 0.170708009889987
4 0.28529541932278
5 0.280657226282614
6 0.326550063156692
7 0.27746048514777
8 0.271062614564751
9 0.303549489154736
10 0.307114400574423
11 0.269627633861857
12 0.340286582997736
13 0.287916712278046
14 0.291575756855892
15 0.299968983298272
16 0.288256331463775
17 0.277561611267872
18 0.30662387908204
19 0.272056610046811
20 0.331575956933829
21 0.320121464377902
22 0.334785556865433
23 0.298138386380289
24 0.282344379826703
};
\addplot [only marks, mark=x,  draw=color3, fill=color3]%colormap/viridis]
table{%
x                      y
0 0.301627883156114
1 0.33747863526997
2 0.268399961227857
3 0.268838090409699
4 0.285295420721746
5 0.358270293863809
6 0.326894399243727
7 0.277266833692062
8 0.272071707392516
9 0.302379566613371
10 0.307121815464507
11 0.267903001055364
12 0.282800282101978
13 0.28791666547701
14 0.292006050318844
15 0.299968983298272
16 0.288256355086087
17 0.277561611267872
18 0.306964014665417
19 0.272067122557064
20 0.345159747710687
21 0.326767073938752
22 0.334778692175624
23 0.298138386750205
24 0.282344379826703
};
\addplot [only marks, mark=x,  draw=color4, fill=color4]%colormap/viridis]
table{%
x                      y
0 0.303113127235057
1 0.337468006309475
2 0.268400063874029
3 0.268817546721309
4 0.285295416258888
5 0.262481879076025
6 0.326550063156692
7 0.277266833692062
8 0.275099000044466
9 0.302950777504554
10 0.307118849508473
11 0.268980497413904
12 0.2965503308552
13 0.287916712278046
14 0.291954343069696
15 0.299968983298272
16 0.288256351062866
17 0.27748212880294
18 0.307105332328024
19 0.272054358410618
20 0.330089603328412
21 0.321691963191408
22 0.334725735997092
23 0.298138386368792
24 0.282344379826702
};
\addplot [only marks, mark=x,  draw=color5, fill=color5]%colormap/viridis]
table{%
x                      y
0 0.302370505195585
1 0.337471423917961
2 0.268399892797076
3 0.26885826083924
4 0.285295417210051
5 0.26247114440313
6 0.326694177579464
7 0.277352738726396
8 0.272071714476844
9 0.302282038147581
10 0.3071154988612
11 0.268914709206927
12 0.29157811908946
13 0.287916693194627
14 0.292024783742547
15 0.299968983298272
16 0.288256368604381
17 0.277323163873076
18 0.307016411684807
19 0.272067122557064
20 0.339033523732889
21 0.321681590574545
22 0.334699257907826
23 0.298138385605966
24 0.282344379826702
};
\addplot [only marks, mark=x,  draw=color6, fill=color6]%colormap/viridis]
table{%
x                      y
0 0.301256572136378
1 0.33747863526997
2 0.268399653289341
3 0.268877294502632
4 0.285295418864666
5 0.262487246412472
6 0.326701498890024
7 0.277221048655829
8 0.262989843605323
9 0.30277307808088
10 0.307121815464507
11 0.26912988916222
12 0.340286582997736
13 0.28791675619622
14 0.292002445902183
15 0.299968983298272
16 0.288256447583294
17 0.277442387570474
18 0.307045448605341
19 0.272061749567773
20 0.351215873512999
21 0.321567475688926
22 0.334799286245053
23 0.298138385750064
24 0.282344379826702
};
\addplot [only marks, mark=x,  draw=white!49.80392156862745!black, fill=white!49.80392156862745!black]%colormap/viridis]
table{%
x                      y
0 0.30355870045874
1 0.199885237829513
2 0.268399892797076
3 0.268817546721309
4 0.285295421195511
5 0.262489356540051
6 0.326838292002237
7 0.277331384177298
8 0.272071696114642
9 0.303549489154736
10 0.30711727843482
11 0.26838278936762
12 0.276910657622606
13 0.287916746231075
14 0.291575756855892
15 0.299968983298272
16 0.288256427838566
17 0.277561611267872
18 0.306925681159975
19 0.272047588251673
20 0.331451536986512
21 0.321757645288937
22 0.334732600686901
23 0.363530681616195
24 0.282344379826703
};
\addplot [only marks, mark=x,  draw=color7, fill=color7]%colormap/viridis]
table{%
x                      y
0 0.299400017037699
1 0.337476762303165
2 0.268399919077096
3 0.268827827307731
4 0.285295413569161
5 0.262492712713586
6 0.326644609152609
7 0.277101330409858
8 0.275099014213122
9 0.302814038631138
10 0.307121815464507
11 0.268862577679877
12 0.275341964453368
13 0.250134019520407
14 0.29174099854655
15 0.299968983298272
16 0.288256463040226
17 0.27748212880294
18 0.30718916953978
19 0.272055718384019
20 0.330089603328412
21 0.321629719440167
22 0.334792421555243
23 0.298138385894162
24 0.282344379826703
};
\addplot [only marks, mark=x,  draw=color8, fill=color8]%colormap/viridis]
table{%
x                      y
0 0.299400017037699
1 0.337475243416004
2 0.268399961227857
3 0.268817546721309
4 0.285295418201643
5 0.262482360599169
6 0.326674676890772
7 0.277101330409858
8 0.275099014213122
9 0.302814038631138
10 0.307121815464507
11 0.268897332031243
12 0.307154514056116
13 0.287916601002639
14 0.292084055495216
15 0.299968983298272
16 0.495779981280438
17 0.277561611267872
18 0.307356843963292
19 0.272062051607935
20 0.331575956933829
21 0.319014359377444
22 0.334699257907826
23 0.205233539474156
24 0.282344379826703
};
\end{axis}

\end{tikzpicture}}\label{fig:knowledge_crea_2500}}
    \subfigure[$t=5000$]{\resizebox{7cm}{4cm}{% This file was created by tikzplotlib v0.8.2.
\begin{tikzpicture}

\definecolor{color0}{rgb}{0.12156862745098,0.466666666666667,0.705882352941177}
\definecolor{color1}{rgb}{1,0.498039215686275,0.0549019607843137}
\definecolor{color2}{rgb}{0.172549019607843,0.627450980392157,0.172549019607843}
\definecolor{color3}{rgb}{0.83921568627451,0.152941176470588,0.156862745098039}
\definecolor{color4}{rgb}{0.580392156862745,0.403921568627451,0.741176470588235}
\definecolor{color5}{rgb}{0.549019607843137,0.337254901960784,0.294117647058824}
\definecolor{color6}{rgb}{0.890196078431372,0.466666666666667,0.76078431372549}
\definecolor{color7}{rgb}{0.737254901960784,0.741176470588235,0.133333333333333}
\definecolor{color8}{rgb}{0.0901960784313725,0.745098039215686,0.811764705882353}

\begin{axis}[
tick align=outside,
tick pos=both,
x grid style={white!69.01960784313725!black},
xmin=-1.28841079764563, xmax=25.2884107976456,
xtick style={color=black},
y grid style={white!69.01960784313725!black},
ymin=-0.1, ymax=1.1,
ytick style={color=black}
]
\addplot [only marks, mark=x,  draw=color0, fill=color0]%colormap/viridis]
table{%
x                      y
0 0.505137233705389
1 0.548939891321602
2 0.527446818973448
3 0.580647041206634
4 0.562293950449495
5 -0.64598450189629
6 0.536184459795229
7 0.545500061643522
8 0.541697281552832
9 0.523852537231873
10 0.50945346996891
11 0.523015074995684
12 0.546871125900864
13 0.506028045207311
14 0.519714854005718
15 0.513006547656565
16 0.503481850349701
17 0.52924054449132
18 0.508551883921367
19 0.541785743661301
20 0.541649592769917
21 0.565405879514814
22 0.549558829486937
23 0.517105048055714
24 0.565710931839941
};
\addplot [only marks, mark=x,  draw=color1, fill=color1]%colormap/viridis]
table{%
x                      y
0 0.50513723370225
1 0.549329910387172
2 0.527348602461404
3 -0.424250201413975
4 0.545564034055838
5 0.540415970341507
6 0.53617877891605
7 0.548051567677956
8 0.540450750777696
9 0.523852537231873
10 0.509453184397088
11 0.523192051114966
12 0.546565730899254
13 0.507672399837475
14 0.519559953625115
15 0.513061392082541
16 0.503325760154737
17 0.525813658543239
18 0.508552191441258
19 0.541793607658418
20 0.541649635770153
21 0.565405879514814
22 0.548800235467291
23 0.517105051271758
24 0.565711067396231
};
\addplot [only marks, mark=x,  draw=color2, fill=color2]%colormap/viridis]
table{%
x                      y
0 0.505137233711667
1 0.549248928246648
2 0.527175031566302
3 0.531989079074867
4 0.541220341266057
5 0.540587121953525
6 0.536185812295543
7 0.548745954464468
8 0.540450750777696
9 0.523857638446352
10 0.50945346996891
11 0.521333684839315
12 0.546798683615175
13 0.506850222522393
14 0.519536718568025
15 0.512896858804614
16 0.50463870435697
17 0.52924054449132
18 0.508552191441258
19 0.541797539656977
20 0.541649682858817
21 0.565405879514814
22 0.54916085189633
23 0.51710504394451
24 0.565710942748178
};
\addplot [only marks, mark=x,  draw=color3, fill=color3]%colormap/viridis]
table{%
x                      y
0 0.505137233617007
1 0.549284154059525
2 0.527446818973448
3 0.531989079074867
4 0.544116136459244
5 0.540587121953525
6 0.53618436197689
7 0.546775814660739
8 0.553322296672052
9 0.0576195599594125
10 0.438651467767992
11 0.523133059075205
12 0.546809126416185
13 0.505971987629797
14 0.519630765227676
15 0.513061392082541
16 0.518441099969608
17 0.529240544695367
18 0.508550314116147
19 0.541794397449318
20 0.541649538792827
21 0.565405879514814
22 0.549429419086726
23 0.517105055039391
24 0.565710988709849
};
\addplot [only marks, mark=x,  draw=color4, fill=color4]%colormap/viridis]
table{%
x                      y
0 0.505137233664337
1 0.548939891321602
2 0.527305986915693
3 0.532038431701953
4 0.556009052750282
5 0.540587121953525
6 0.536184910628667
7 0.545500061643522
8 0.540450750777696
9 0.523852537231873
10 0.509453319371953
11 0.522749593874765
12 0.546821434606914
13 0.506999709229688
14 0.519916224500501
15 0.512979125443577
16 0.506356831115138
17 0.522386771945279
18 0.508551783336689
19 0.541785743661301
20 0.541649592769917
21 0.565405879514814
22 0.549760254879734
23 0.517105046658979
24 0.565710975794619
};
\addplot [only marks, mark=x,  draw=color5, fill=color5]%colormap/viridis]
table{%
x                      y
0 0.505137233711667
1 0.549248928246648
2 0.527446818973448
3 0.532008915634433
4 0.547564076078907
5 0.540550827188435
6 0.536183989832661
7 0.546775814660739
8 0.54813305450001
9 0.523862739660831
10 0.573539697022359
11 0.523428052110813
12 0.546809126416185
13 0.506028045207311
14 0.519559953625115
15 0.512942605206654
16 0.505019530575006
17 0.529240544775962
18 0.50855174951612
19 0.541785743661301
20 0.541649660778025
21 0.565405879514814
22 0.549362277289127
23 0.51710505614257
24 0.565710987376961
};
\addplot [only marks, mark=x,  draw=color6, fill=color6]%colormap/viridis]
table{%
x                      y
0 0.505137233711667
1 0.549256374065172
2 0.527137041940844
3 0.565150799695453
4 0.541220341266057
5 0.540522024913708
6 0.536184385994575
7 -0.288888550093594
8 0.541697281552832
9 0.523862739660831
10 0.509453369570939
11 0.523192051114966
12 0.546821434606914
13 0.505542213196123
14 0.633560125087302
15 0.513140583333131
16 0.505019530575006
17 0.529240544937152
18 0.508551875395579
19 0.541787777073316
20 0.541649721770624
21 0.565405879514814
22 0.549956807077544
23 0.517105054671664
24 0.56571106077775
};
\addplot [only marks, mark=x,  draw=white!49.80392156862745!black, fill=white!49.80392156862745!black]%colormap/viridis]
table{%
x                      y
0 0.505137233697547
1 0.548994557024166
2 0.527348602461404
3 0.532048588753567
4 0.551757145857776
5 0.540525225166455
6 0.570709668018987
7 0.505972162796703
8 0.549031781373933
9 0.523852537231873
10 0.50945313315663
11 0.525315930824747
12 0.546769931503487
13 0.506028045207311
14 0.519559953625115
15 0.512823818330768
16 0.507025481385203
17 0.529240544021816
18 0.508551826216485
19 0.747665106743646
20 0.541649611525821
21 0.565405879514814
22 0.549956807077544
23 0.517105053936211
24 0.565711132845651
};
\addplot [only marks, mark=x,  draw=color7, fill=color7]%colormap/viridis]
table{%
x                      y
0 0.505137233711667
1 0.549248928246648
2 0.527235570886816
3 0.531972738868341
4 0.561036970909652
5 0.540550124432475
6 0.536186024369104
7 0.545274020531911
8 0.543975549322519
9 0.523859338851178
10 0.509453168774997
11 0.525315930824747
12 0.546781034311988
13 0.507672399837475
14 0.519559953625115
15 0.512903009581359
16 0.501754362192976
17 0.529240544775962
18 0.50855194333775
19 0.541788793779323
20 0.541649650684158
21 0.565405879514814
22 0.549956807077544
23 0.517105027749771
24 0.565710984297529
};
\addplot [only marks, mark=x,  draw=color8, fill=color8]%colormap/viridis]
table{%
x                      y
0 0.505137233692834
1 0.549098132693387
2 0.527348602461404
3 0.532031899106041
4 0.551757145857776
5 0.540483398764971
6 0.536184910628667
7 0.546010362850409
8 0.757579096774679
9 0.523852537231873
10 0.50945346996891
11 0.52363453298277
12 0.546746788112427
13 0.506999709229688
14 0.519312113016151
15 0.513061392082541
16 0.501631989734468
17 0.522386772168195
18 0.508551775093481
19 0.541785743661301
20 0.541649721770624
21 0.565405879514814
22 0.54916085189633
23 0.517105041072038
24 0.565710975472888
};
\end{axis}

\end{tikzpicture}}\label{fig:knowledge_crea_5000}}
    \subfigure[$t=7500$]{\resizebox{7cm}{4cm}{% This file was created by tikzplotlib v0.8.2.
\begin{tikzpicture}

\definecolor{color0}{rgb}{0.12156862745098,0.466666666666667,0.705882352941177}
\definecolor{color1}{rgb}{1,0.498039215686275,0.0549019607843137}
\definecolor{color2}{rgb}{0.172549019607843,0.627450980392157,0.172549019607843}
\definecolor{color3}{rgb}{0.83921568627451,0.152941176470588,0.156862745098039}
\definecolor{color4}{rgb}{0.580392156862745,0.403921568627451,0.741176470588235}
\definecolor{color5}{rgb}{0.549019607843137,0.337254901960784,0.294117647058824}
\definecolor{color6}{rgb}{0.890196078431372,0.466666666666667,0.76078431372549}
\definecolor{color7}{rgb}{0.737254901960784,0.741176470588235,0.133333333333333}
\definecolor{color8}{rgb}{0.0901960784313725,0.745098039215686,0.811764705882353}

\begin{axis}[
tick align=outside,
tick pos=both,
x grid style={white!69.01960784313725!black},
xmin=-1.28841079764563, xmax=25.2884107976456,
xtick style={color=black},
y grid style={white!69.01960784313725!black},
ymin=-0.1, ymax=1.1,
ytick style={color=black}
]
\addplot [only marks, mark=x,  draw=color0, fill=color0]%colormap/viridis]
table{%
x                      y
0 0.670109318169486
1 0.637069087733836
2 0.721394789283164
3 0.669362470156643
4 0.641697848474105
5 0.604686601239122
6 0.648333091351768
7 0.665723807822452
8 0.6468311403259
9 0.654182447732562
10 0.658313764061192
11 0.672256370756405
12 0.673287006272307
13 0.628860861518286
14 0.628355940292043
15 0.673057211938799
16 0.640889871744551
17 0.676697772539842
18 0.636852784477109
19 0.618830959134273
20 0.604799893945734
21 0.617172845816442
22 0.663426242178518
23 0.624689770748298
24 0.62126371361181
};
\addplot [only marks, mark=x,  draw=color1, fill=color1]%colormap/viridis]
table{%
x                      y
0 0.669042628992458
1 0.638107265428862
2 0.721394789283164
3 0.672257820167945
4 0.641697848474105
5 0.604686601239122
6 0.648333091351768
7 0.665723807822452
8 0.67440612095887
9 0.654249680271428
10 0.658313764061192
11 0.672927406008593
12 0.672255523922007
13 0.625532715357379
14 0.628355919755024
15 0.671026045311821
16 0.630474045475204
17 0.765764151691221
18 0.636841757377954
19 0.629553226325975
20 0.604798941913183
21 0.651393754644472
22 0.662857438047518
23 0.622836869614056
24 0.65250567255253
};
\addplot [only marks, mark=x,  draw=color2, fill=color2]%colormap/viridis]
table{%
x                      y
0 0.669246586848179
1 0.637918505847976
2 0.693369312869409
3 0.669102614856946
4 0.641697848474105
5 0.604686601239122
6 0.648333091351768
7 0.665723807822452
8 0.674595891475973
9 0.654174977739782
10 0.658313764061192
11 0.671919984533471
12 0.672771265097157
13 0.624492137932274
14 0.628356953770072
15 0.673057211938799
16 0.637628791231568
17 0.676697772539842
18 0.636852784477109
19 0.642323376742235
20 0.604799178512104
21 0.633461991630024
22 0.665169236245041
23 0.624689770748298
24 0.655303346290499
};
\addplot [only marks, mark=x,  draw=color3, fill=color3]%colormap/viridis]
table{%
x                      y
0 0.668898527151521
1 0.5271353848362
2 0.684027485001176
3 0.669362470156643
4 0.641697848474105
5 0.604686601239122
6 0.648333091351768
7 0.665723807822452
8 0.918821109175517
9 0.654249680416716
10 0.658313764061192
11 0.67091256305835
12 0.673287006272307
13 0.62550171518152
14 0.628356536343212
15 0.669117979692539
16 0.642600228430121
17 0.676697772539842
18 0.636845433077672
19 0.639500483012382
20 0.604798527210165
21 0.609026231897666
22 0.663434371439301
23 0.624226545464737
24 0.679721266968662
};
\addplot [only marks, mark=x,  draw=color4, fill=color4]%colormap/viridis]
table{%
x                      y
0 0.668898527151521
1 0.637210657419501
2 0.684027485335959
3 0.671139682543296
4 0.641697848474105
5 0.604686601239122
6 0.648333091351768
7 0.665723807822452
8 0.674600399299155
9 0.654249680416716
10 0.658313764061192
11 0.67192319012268
12 0.672426604154398
13 0.625704786897661
14 0.628356118916352
15 0.66936418170793
16 0.644499456257103
17 0.676697772539842
18 0.636852784477109
19 0.627396425100702
20 0.604799047054942
21 0.633461991630024
22 0.663877233059116
23 0.62478241580501
24 0.655303346290499
};
\addplot [only marks, mark=x,  draw=color5, fill=color5]%colormap/viridis]
table{%
x                      y
0 0.669352573783258
1 0.637918505847979
2 0.721394789283164
3 0.669791812403222
4 0.641697848474105
5 0.604686601239122
6 0.648333091351768
7 0.665723807822452
8 0.674586875829608
9 0.654234739850002
10 0.601388596703812
11 0.672423695271032
12 0.671996403095444
13 0.625255666146181
14 0.628357132394381
15 0.667148363569409
16 0.63657920592246
17 0.676697772539842
18 0.636841969735048
19 0.62963069161599
20 0.604800072453526
21 0.609028272909651
22 0.66169544200317
23 0.624380953892591
24 0.662510095270564
};
\addplot [only marks, mark=x,  draw=color6, fill=color6]%colormap/viridis]
table{%
x                      y
0 0.668898527151521
1 0.638201645219308
2 0.68402748550369
3 0.670538755571653
4 0.641697848474105
5 0.604686601239122
6 0.648333091351768
7 0.665723807822452
8 0.67445243663235
9 0.654204859297726
10 0.658313764061192
11 0.670918105639928
12 0.67388995454897
13 0.627181288349903
14 0.62835799049155
15 0.49253808928142
16 0.645610739683951
17 0.676697772539842
18 0.636830730278799
19 0.627396425100702
20 0.604798981326361
21 0.651396223610583
22 0.665207869227358
23 0.622836869614056
24 0.63688469308217
};
\addplot [only marks, mark=x,  draw=white!49.80392156862745!black, fill=white!49.80392156862745!black]%colormap/viridis]
table{%
x                      y
0 0.669045300997271
1 0.638201645219308
2 0.684027486160759
3 0.669133139809424
4 0.641697848474105
5 0.604686601239122
6 0.648333091351768
7 0.665723807822452
8 0.674521703902294
9 0.654249680416716
10 0.658313764061192
11 0.672927840307013
12 0.67303038592372
13 0.62505405651485
14 0.628358189812079
15 0.667886969615583
16 0.641875776152451
17 0.676697772539842
18 0.636852784477109
19 0.640275493517677
20 0.604798705314263
21 0.637434710620282
22 0.662579132927605
23 0.622836869614056
24 0.641420625484859
};
\addplot [only marks, mark=x,  draw=color7, fill=color7]%colormap/viridis]
table{%
x                      y
0 0.669050645006896
1 0.640183620818833
2 0.684027485619776
3 0.677128785792435
4 0.641697848474105
5 0.604686601239122
6 0.648333091351768
7 0.665723807822452
8 0.674447928809168
9 0.654227269857221
10 0.658313764061192
11 0.671856234893596
12 0.672513811255912
13 0.625165189431771
14 0.628357043082226
15 0.678227454262016
16 0.648133522052076
17 0.676697772539842
18 0.636841757377954
19 0.630918481809013
20 0.604799178512104
21 0.633461991630024
22 0.665157042353867
23 0.625152996031859
24 0.665106878629449
};
\addplot [only marks, mark=x,  draw=color8, fill=color8]%colormap/viridis]
table{%
x                      y
0 0.669420616696508
1 0.708527790700875
2 0.690255369279807
3 0.669232542506794
4 0.641697848474105
5 0.604686601239122
6 0.648333091351768
7 0.665723807822452
8 0.674408595842185
9 0.654234739946861
10 0.658313764061192
11 0.672928274605433
12 0.673487989031194
13 0.62775147946465
14 0.628357788623792
15 0.670841393800278
16 0.643388172830254
17 0.676697772539842
18 0.636814720522801
19 0.640275493517677
20 0.604799770083029
21 0.633461991630024
22 0.661118508611387
23 0.624689770748298
24 0.660708408025547
};
\end{axis}

\end{tikzpicture}}\label{fig:knowledge_crea_7500}}
    \subfigure[$t=10000$]{\resizebox{7cm}{4cm}{% This file was created by tikzplotlib v0.8.2.
\begin{tikzpicture}

\definecolor{color0}{rgb}{0.12156862745098,0.466666666666667,0.705882352941177}
\definecolor{color1}{rgb}{1,0.498039215686275,0.0549019607843137}
\definecolor{color2}{rgb}{0.172549019607843,0.627450980392157,0.172549019607843}
\definecolor{color3}{rgb}{0.83921568627451,0.152941176470588,0.156862745098039}
\definecolor{color4}{rgb}{0.580392156862745,0.403921568627451,0.741176470588235}
\definecolor{color5}{rgb}{0.549019607843137,0.337254901960784,0.294117647058824}
\definecolor{color6}{rgb}{0.890196078431372,0.466666666666667,0.76078431372549}
\definecolor{color7}{rgb}{0.737254901960784,0.741176470588235,0.133333333333333}
\definecolor{color8}{rgb}{0.0901960784313725,0.745098039215686,0.811764705882353}

\begin{axis}[
tick align=outside,
tick pos=both,
x grid style={white!69.01960784313725!black},
xmin=-1.28841079764563, xmax=25.2884107976456,
xtick style={color=black},
y grid style={white!69.01960784313725!black},
ymin=-0.1, ymax=1.1,
ytick style={color=black}
]
\addplot [only marks, mark=x,  draw=color0, fill=color0]%colormap/viridis]
table{%
x                      y
0 0.732899323862076
1 0.733108955882305
2 0.747437776876372
3 0.742534858033335
4 0.737928532124705
5 0.740701208495008
6 0.727148052172403
7 0.751072142451243
8 0.70995071443788
9 0.753409357072323
10 0.716158947497578
11 0.699511147061984
12 0.750601149148748
13 0.747109217908695
14 0.71183160248047
15 0.733654698149134
16 0.720931824174486
17 0.752951144448069
18 0.743400489784845
19 0.744411654820408
20 0.725344196646716
21 0.771422812419652
22 0.718065903041676
23 0.712996758961224
24 0.736039433981727
};
\addplot [only marks, mark=x,  draw=color1, fill=color1]%colormap/viridis]
table{%
x                      y
0 0.732899323862076
1 0.732413734110335
2 0.74698123574197
3 1.03355560470872
4 0.737928360922365
5 0.740701208495008
6 0.727148052172403
7 0.751085387879173
8 0.70995071443788
9 0.753409357072323
10 0.716158931999795
11 0.716379509601455
12 0.750600234288706
13 0.740681314659474
14 0.712016527418032
15 0.733654698149134
16 0.721443064504271
17 0.752951144447802
18 0.743400489779174
19 0.744409941014011
20 0.725344196646716
21 0.735586841375797
22 0.717649143058599
23 0.724861929895899
24 0.736039433981727
};
\addplot [only marks, mark=x,  draw=color2, fill=color2]%colormap/viridis]
table{%
x                      y
0 0.732899323862076
1 0.733340696472962
2 0.74698123574197
3 0.742534868109283
4 0.737928703327044
5 0.740701208495008
6 0.727929599088482
7 0.751058897023314
8 0.70995071443788
9 0.753409238418748
10 0.716158931999795
11 0.716379509601455
12 0.75059688722679
13 0.749251852325102
14 0.711873252241182
15 0.733654698149134
16 0.722109127438473
17 0.464546656968964
18 0.743400489779174
19 0.744406513401219
20 0.525713507066109
21 0.735586840606857
22 0.717974688379952
23 0.707266286919423
24 0.736039433981727
};
\addplot [only marks, mark=x,  draw=color3, fill=color3]%colormap/viridis]
table{%
x                      y
0 0.732899323862076
1 1.02226318542016
2 0.746497057047502
3 0.742534868109283
4 0.737928446523535
5 0.740701208495008
6 0.727408567811096
7 0.750901741706379
8 0.721832591316582
9 0.753409238418748
10 0.716158908753122
11 0.691076965792248
12 0.750601149148748
13 0.744966583492288
14 0.711884081178967
15 0.733654698149134
16 0.720420583844702
17 0.877419493552717
18 0.743400489790515
19 0.744411654820408
20 0.725344636795953
21 0.735586839068977
22 0.718065903041676
23 0.824304022797082
24 0.73691379783175
};
\addplot [only marks, mark=x,  draw=color4, fill=color4]%colormap/viridis]
table{%
x                      y
0 0.732899323862076
1 0.732413734110335
2 0.747437776876372
3 0.742534865422364
4 0.737929473737572
5 0.740701208495008
6 0.55526084680333
7 0.751058897023314
8 0.70995071443788
9 0.900715278595601
10 0.716158978493143
11 0.716379509601455
12 0.750589479070181
13 0.81079832738422
14 0.711881582193325
15 0.733656828161224
16 0.722775190372676
17 0.752951144448516
18 0.743400489790515
19 0.744411654820408
20 0.725298127271918
21 0.735586839068977
22 0.717805363498352
23 0.712996758961224
24 0.738662525531797
};
\addplot [only marks, mark=x,  draw=color5, fill=color5]%colormap/viridis]
table{%
x                      y
0 0.732899323862076
1 0.732413734110335
2 0.74698123574197
3 0.742534868109283
4 0.737930244148099
5 0.740701208495008
6 0.727148052172403
7 0.751085387879173
8 0.70995071443788
9 0.753409086570423
10 0.716158978493143
11 0.716379509601455
12 0.750592625304833
13 0.74639500643656
14 0.71183160248047
15 0.733655763155179
16 0.720420583844702
17 0.752951144448516
18 0.743400489779174
19 0.744411654820408
20 0.725345517094425
21 0.735586841375797
22 0.717856748403329
23 0.712996758961224
24 0.736039433981727
};
\addplot [only marks, mark=x,  draw=color6, fill=color6]%colormap/viridis]
table{%
x                      y
0 0.732899323862076
1 0.733804177654275
2 0.746054334693133
3 0.742534864963361
4 0.737928189720026
5 0.740701208495008
6 0.73396097848168
7 0.751058897023314
8 0.70995071443788
9 0.753409238418748
10 0.716158978493143
11 0.691076965792248
12 0.750601149148748
13 0.749251852325102
14 0.71183160248047
15 0.733664283203539
16 0.721443064504271
17 0.752951144448516
18 0.743400489779174
19 0.744406513401219
20 0.725344856870571
21 0.735586839068977
22 0.718065903041676
23 0.69517277824295
24 0.736039433981727
};
\addplot [only marks, mark=x,  draw=white!49.80392156862745!black, fill=white!49.80392156862745!black]%colormap/viridis]
table{%
x                      y
0 0.732899323862076
1 0.732413734110335
2 0.747157806531573
3 0.742534855133688
4 0.737928703327044
5 0.740701208495008
6 0.727148052172403
7 0.75118490744161
8 0.696793492884378
9 0.753409238418748
10 0.717857519974047
11 0.716379509601455
12 0.75059516163279
13 0.740681314659474
14 0.71183160248047
15 0.733654698149134
16 0.721443064504271
17 0.752951144447802
18 0.522058645075289
19 0.744409084110813
20 0.725344196646716
21 0.735586840799092
22 0.718065903041676
23 0.712996758961224
24 0.736585911387991
};
\addplot [only marks, mark=x,  draw=color7, fill=color7]%colormap/viridis]
table{%
x                      y
0 0.732899323862076
1 0.732413734110335
2 0.746515482087546
3 0.74253486488498
4 0.737928532124705
5 0.740701208495008
6 0.727148052172403
7 0.751085387879173
8 0.729686546768133
9 0.753409238418748
10 0.776495688081545
11 0.716359117772218
12 0.750595649558146
13 0.74068220432154
14 0.71183160248047
15 0.733654698149134
16 0.721187444339379
17 0.752951144447802
18 0.743400489790515
19 0.744424762871001
20 0.725344636795953
21 0.735586840222387
22 0.718065903041676
23 0.701535814877622
24 0.736039433981727
};
\addplot [only marks, mark=x,  draw=color8, fill=color8]%colormap/viridis]
table{%
x                      y
0 0.732899323862076
1 0.732413734110335
2 0.746524694607568
3 0.742534868109283
4 0.737928189720026
5 0.740701208495008
6 0.727148052172403
7 0.751058897023314
8 0.622992765556764
9 0.753409416399111
10 0.716913903898735
11 0.716379509601455
12 0.750584101460918
13 0.740681314659474
14 0.711956551762607
15 0.733654698149134
16 0.721443064504271
17 0.752951144449943
18 0.743400489779174
19 0.744409084110813
20 0.725345517094425
21 0.735586839068977
22 0.718005093267194
23 0.903002055206534
24 0.737350979756762
};
\end{axis}

\end{tikzpicture}}\label{fig:knowledge_crea_10000}}
    \subfigure[$t=12500$]{\resizebox{7cm}{4cm}{% This file was created by tikzplotlib v0.8.2.
\begin{tikzpicture}

\definecolor{color0}{rgb}{0.12156862745098,0.466666666666667,0.705882352941177}
\definecolor{color1}{rgb}{1,0.498039215686275,0.0549019607843137}
\definecolor{color2}{rgb}{0.172549019607843,0.627450980392157,0.172549019607843}
\definecolor{color3}{rgb}{0.83921568627451,0.152941176470588,0.156862745098039}
\definecolor{color4}{rgb}{0.580392156862745,0.403921568627451,0.741176470588235}
\definecolor{color5}{rgb}{0.549019607843137,0.337254901960784,0.294117647058824}
\definecolor{color6}{rgb}{0.890196078431372,0.466666666666667,0.76078431372549}
\definecolor{color7}{rgb}{0.737254901960784,0.741176470588235,0.133333333333333}
\definecolor{color8}{rgb}{0.0901960784313725,0.745098039215686,0.811764705882353}

\begin{axis}[
tick align=outside,
tick pos=both,
x grid style={white!69.01960784313725!black},
xmin=-1.28841079764563, xmax=25.2884107976456,
xtick style={color=black},
y grid style={white!69.01960784313725!black},
ymin=-0.1, ymax=1.1,
ytick style={color=black}
]
\addplot [only marks, mark=x,  draw=color0, fill=color0]%colormap/viridis]
table{%
x                      y
0 0.87182383282512
1 0.83498267213499
2 0.85236555435441
3 0.856128406388783
4 0.839622189484368
5 0.843900491885511
6 0.876863806603875
7 0.855253820046327
8 0.874119891293588
9 0.830250063930492
10 0.889621343521379
11 0.913039454407375
12 0.874610336294447
13 0.837550177106623
14 0.849660387430847
15 0.843087267943272
16 0.854743449602045
17 0.847239392194624
18 0.875298599616667
19 0.832212242054068
20 0.851122957513504
21 0.854656579086878
22 0.832845763291454
23 0.866978075241345
24 0.841231407253797
};
\addplot [only marks, mark=x,  draw=color1, fill=color1]%colormap/viridis]
table{%
x                      y
0 0.871826341141582
1 0.83498267213499
2 0.852063052521691
3 0.845008299318174
4 0.839622189484368
5 0.833091810437813
6 0.851472130016899
7 0.85524722905518
8 0.87413624975899
9 0.830256898311363
10 0.889543939918345
11 0.874581345812485
12 0.87647468791007
13 0.633601252543461
14 0.84966510386067
15 0.843177401458895
16 0.854743548890726
17 0.876366416290175
18 0.86310168774668
19 0.832036951979057
20 0.851122957513504
21 0.854656579086878
22 0.842114101666964
23 0.866978080018736
24 0.841231430513476
};
\addplot [only marks, mark=x,  draw=color2, fill=color2]%colormap/viridis]
table{%
x                      y
0 0.871828725541487
1 0.83498267213499
2 0.891366205863556
3 0.856128406582757
4 0.839622189484368
5 0.837820612076008
6 0.875901658616454
7 0.855254679740825
8 0.87413624975899
9 0.830240381890924
10 0.889402033312781
11 0.874581345812485
12 0.873678160486636
13 0.837538812285338
14 0.84967832758616
15 0.843428735300536
16 0.854743548890726
17 0.876366468299824
18 0.863101687750768
19 0.831511081754025
20 0.851122957513504
21 0.854656588555673
22 0.791138233612116
23 0.866978082407432
24 0.841231427929067
};
\addplot [only marks, mark=x,  draw=color3, fill=color3]%colormap/viridis]
table{%
x                      y
0 0.871819187941868
1 0.83498267213499
2 0.852085449626867
3 0.867248514235288
4 0.839622189484368
5 0.837820612076008
6 0.850028908035767
7 0.855254679740825
8 0.874128070526289
9 0.830256898311363
10 0.889483737115984
11 0.889977004569531
12 0.874610336294447
13 0.837572906749191
14 0.84967832758616
15 0.843312601732329
16 0.854743548890726
17 0.688760538832183
18 0.863101687748724
19 0.832212242054068
20 0.851122957513504
21 0.85465655778209
22 0.832845766087272
23 0.866978082407432
24 0.841231434131649
};
\addplot [only marks, mark=x,  draw=color4, fill=color4]%colormap/viridis]
table{%
x                      y
0 0.871837045290839
1 0.83498267213499
2 0.852063052521691
3 0.845008298542277
4 0.839622189461947
5 0.827687497752583
6 0.876863806603875
7 0.8552632766858
8 0.999122807671152
9 0.830243229549621
10 0.889182723104184
11 0.863081159191287
12 0.872745984678824
13 0.837538812285338
14 0.849663670070335
15 0.843087267943272
16 0.854743548890726
17 0.876367404473511
18 0.977518660160019
19 0.831511081754025
20 0.851122957513504
21 0.854656096178354
22 0.823577424915945
23 0.866978075241345
24 0.84123142276025
};
\addplot [only marks, mark=x,  draw=color5, fill=color5]%colormap/viridis]
table{%
x                      y
0 0.871828725541486
1 0.83498267213499
2 0.852115146928152
3 0.845008292917023
4 0.839622189484368
5 0.852007016990592
6 0.875420584622744
7 0.85524722905518
8 0.87412334950274
9 0.830237534232228
10 0.889552540318682
11 0.874581345812485
12 0.873678160486636
13 0.838612939177376
14 0.849641683796598
15 0.843087267943272
16 0.854743474424215
17 0.262029830115226
18 0.86310168774668
19 0.831978521954054
20 0.851122957513504
21 0.854656664306029
22 0.834640744050184
23 0.866978078502545
24 0.841231407253797
};
\addplot [only marks, mark=x,  draw=color6, fill=color6]%colormap/viridis]
table{%
x                      y
0 0.871815146654922
1 0.83498267213499
2 0.852085449626867
3 0.845008299318174
4 0.839622189484368
5 0.829714115009545
6 0.84300823782124
7 0.855268434852785
8 0.87413624975899
9 0.830250063930492
10 0.889414933913287
11 0.84391418148929
12 0.872745984678824
13 0.837807344008348
14 0.849675300199252
15 0.843312601732329
16 0.854743548890726
17 0.876366156241928
18 0.887495511486653
19 0.83256282220409
20 0.934005750895416
21 0.854656777931564
22 0.836703254657254
23 0.866978075241345
24 0.841231417591432
};
\addplot [only marks, mark=x,  draw=white!49.80392156862745!black, fill=white!49.80392156862745!black]%colormap/viridis]
table{%
x                      y
0 0.871825641211798
1 0.83498267213499
2 0.852063052521691
3 0.867248514235288
4 0.839622189484368
5 0.843900491885511
6 0.826080453429922
7 0.85524722905518
8 0.87412334950274
9 0.830243229549621
10 0.889621343521379
11 0.859247763650888
12 0.872745984678824
13 0.837807344008348
14 0.849660005691379
15 0.843267534974518
16 0.854743501454868
17 0.876366616527325
18 0.882616746738184
19 0.831861661904047
20 0.851122957513504
21 0.854656575028823
22 0.832845763291454
23 0.866978078824388
24 0.841231427929068
};
\addplot [only marks, mark=x,  draw=color7, fill=color7]%colormap/viridis]
table{%
x                      y
0 0.871826669321695
1 0.83498267213499
2 0.852063052521691
3 0.845008299318174
4 0.839622189484368
5 0.84930494011546
6 0.876863806603875
7 0.85525754538915
8 0.874128070526289
9 0.830242470173968
10 0.88946653631531
11 0.874581345812485
12 0.873367435217365
13 0.860367192365612
14 0.84967832758616
15 0.843087267943272
16 0.854743548890726
17 0.876365844184033
18 0.871232962326671
19 0.83256282220409
20 0.851122957513504
21 0.854656564883686
22 0.81894325572819
23 0.866978075241345
24 0.84123142276025
};
\addplot [only marks, mark=x,  draw=color8, fill=color8]%colormap/viridis]
table{%
x                      y
0 0.871819187941868
1 0.83498267213499
2 0.852063052521691
3 0.845008299318174
4 0.780142774492205
5 0.842009008155166
6 0.856550465334294
7 0.855249406947907
8 0.87413624975899
9 0.830240381890924
10 0.889414933913287
11 0.864358957704753
12 0.873678160486636
13 0.837561541927907
14 0.84967832758616
15 0.843461668700475
16 0.854743406583151
17 0.876365844184033
18 0.887495511486653
19 0.529664855114734
20 0.851122957513504
21 0.854656550680494
22 0.823577420256248
23 0.866978075241345
24 0.841231425861541
};
\end{axis}

\end{tikzpicture}}\label{fig:knowledge_crea_12500}}
    \subfigure[$t=15000$]{\resizebox{7cm}{4cm}{% This file was created by tikzplotlib v0.8.2.
\begin{tikzpicture}

\definecolor{color0}{rgb}{0.12156862745098,0.466666666666667,0.705882352941177}
\definecolor{color1}{rgb}{1,0.498039215686275,0.0549019607843137}
\definecolor{color2}{rgb}{0.172549019607843,0.627450980392157,0.172549019607843}
\definecolor{color3}{rgb}{0.83921568627451,0.152941176470588,0.156862745098039}
\definecolor{color4}{rgb}{0.580392156862745,0.403921568627451,0.741176470588235}
\definecolor{color5}{rgb}{0.549019607843137,0.337254901960784,0.294117647058824}
\definecolor{color6}{rgb}{0.890196078431372,0.466666666666667,0.76078431372549}
\definecolor{color7}{rgb}{0.737254901960784,0.741176470588235,0.133333333333333}
\definecolor{color8}{rgb}{0.0901960784313725,0.745098039215686,0.811764705882353}

\begin{axis}[
tick align=outside,
tick pos=both,
x grid style={white!69.01960784313725!black},
xmin=-1.28841079764563, xmax=25.2884107976456,
xtick style={color=black},
y grid style={white!69.01960784313725!black},
ymin=-0.1, ymax=1.1,
ytick style={color=black}
]
\addplot [only marks, mark=x,  draw=color0, fill=color0]%colormap/viridis]
table{%
x                      y
0 0.913911098762686
1 0.939938267906227
2 0.886120602424294
3 0.912033216240961
4 0.948407820613417
5 0.944376270812879
6 0.939957892955034
7 0.938236632153083
8 0.903021053807567
9 0.955215166890433
10 0.926733653401012
11 0.956344616216122
12 0.941142815448844
13 0.904339322596364
14 0.947434384262506
15 0.927897252206587
16 0.908458384874542
17 0.889288337776086
18 0.898557444809762
19 0.937109456389051
20 1.15199958805097
21 0.912690222050934
22 0.899893540805084
23 0.922111674536945
24 0.924690365574298
};
\addplot [only marks, mark=x,  draw=color1, fill=color1]%colormap/viridis]
table{%
x                      y
0 0.913910808173684
1 0.939938267906227
2 0.887933214247657
3 0.912161734035258
4 0.94726346288539
5 0.938616586067112
6 0.939959600266588
7 0.93849452702916
8 0.903155628898581
9 0.955215166890433
10 0.926775335140503
11 0.727340862736732
12 0.941164694474453
13 0.910493800967836
14 0.946915893941311
15 0.932928367802027
16 0.908766683043363
17 0.889288008534609
18 0.898557444794175
19 0.938295421593727
20 0.879629995738521
21 0.912556536315152
22 0.900069505810337
23 0.922115564903593
24 0.92424907856975
};
\addplot [only marks, mark=x,  draw=color2, fill=color2]%colormap/viridis]
table{%
x                      y
0 0.913912164255694
1 0.929957523594867
2 0.88430799060093
3 1.02744285443949
4 0.947749844443956
5 0.94833749585108
6 0.939959625010233
7 0.938067305101796
8 0.903078028848093
9 0.955215166890433
10 0.926747547314175
11 0.956344065578334
12 0.941111252264361
13 0.900464281259815
14 0.947434384262506
15 0.936268345336392
16 0.908561150930816
17 0.889288213229398
18 0.898557444750353
19 0.922538371059911
20 0.879629995738521
21 0.912799507608223
22 0.900323356965456
23 0.922116537495255
24 0.924800687325436
};
\addplot [only marks, mark=x,  draw=color3, fill=color3]%colormap/viridis]
table{%
x                      y
0 0.913904705804634
1 0.934316498959879
2 0.887933214247657
3 0.912370575450991
4 0.947286207109235
5 0.944376270812879
6 0.939958152763314
7 0.939287068998261
8 0.902562767135076
9 0.955215166890433
10 0.927108789056435
11 0.956343239621653
12 0.941159314386188
13 0.885875887481946
14 0.947297755056245
15 0.936194322807764
16 0.909656285979883
17 0.889287994119483
18 1.17288579565071
19 0.919988973227759
20 0.879629995738521
21 0.912556536315152
22 0.899365645789327
23 0.922114592311931
24 0.924911009076573
};
\addplot [only marks, mark=x,  draw=color4, fill=color4]%colormap/viridis]
table{%
x                      y
0 0.91390790228366
1 0.935406289352221
2 0.884049046054735
3 0.912033216240961
4 0.947626983680985
5 0.960928504047533
6 0.939959451804713
7 0.938250726531667
8 0.903233228949069
9 0.955215166890433
10 0.926942062098469
11 0.956344340897228
12 0.941111252264361
13 0.869463951628422
14 0.947066536399496
15 0.936226046748605
16 0.909256985611436
17 0.889288167869801
18 0.927501752729833
19 0.922538371059911
20 0.879629995738521
21 0.912494511914965
22 0.899746422522004
23 0.922116861692476
24 0.924886493131875
};
\addplot [only marks, mark=x,  draw=color5, fill=color5]%colormap/viridis]
table{%
x                      y
0 0.913906836790651
1 0.940626835881609
2 0.88430799060093
3 0.912418769623853
4 0.947587672960699
5 0.950135955558645
6 0.939963608737191
7 0.938633283842135
8 0.375156306940363
9 0.955215166890433
10 0.926488194268451
11 0.956344685045845
12 0.941125599166399
13 0.910493800967836
14 0.947224185483643
15 0.936031473244783
16 0.908857685242989
17 0.889288144901701
18 0.898557444848749
19 0.922538371059911
20 0.879629995738521
21 0.912628813330699
22 0.900323356965456
23 0.922116537495255
24 0.924690365574298
};
\addplot [only marks, mark=x,  draw=color6, fill=color6]%colormap/viridis]
table{%
x                      y
0 0.913912055284819
1 0.939938267906227
2 0.884204412782452
3 0.912418769623853
4 0.947184841444818
5 0.968697436445759
6 0.939961097257152
7 0.939287068998261
8 0.902511240963774
9 0.955215166890433
10 1.04474865380608
11 0.956341587708291
12 0.941161466421494
13 0.897403324805953
14 0.947224185483643
15 0.93631064392418
16 0.90945663579566
17 0.88928804409192
18 0.927501752856404
19 0.937911535309639
20 0.879629995738521
21 0.912628813330699
22 0.900323356965456
23 0.922112971325827
24 0.925131652578847
};
\addplot [only marks, mark=x,  draw=white!49.80392156862745!black, fill=white!49.80392156862745!black]%colormap/viridis]
table{%
x                      y
0 0.913912164255694
1 0.930989956598139
2 0.883893679327018
3 0.912418769623853
4 0.946832692761146
5 0.948337495851079
6 0.939957892955034
7 0.938473893067285
8 0.903473891610835
9 0.955215166890433
10 0.926469669050899
11 0.956342964302759
12 0.941180117394143
13 0.898184844224891
14 0.947224185483643
15 0.936226046748605
16 0.908458384874542
17 0.889288078880424
18 0.898557444845961
19 0.929823913724481
20 0.879629995738521
21 0.912790024592805
22 0.8995618035001
23 0.922117510086918
24 0.924690365574298
};
\addplot [only marks, mark=x,  draw=color7, fill=color7]%colormap/viridis]
table{%
x                      y
0 0.913902671681618
1 0.936152680227565
2 0.887933214247657
3 0.912418769623853
4 0.947536990128491
5 0.946321652549859
6 0.939962049887512
7 0.938473893067285
8 0.902511240963774
9 0.955215166890433
10 0.92669197166152
11 0.956345304513356
12 0.941111252264361
13 0.910493800967836
14 0.946880860811501
15 0.936226046748605
16 0.908612533958953
17 0.889288022949735
18 0.898557444798157
19 0.937109456389051
20 0.879629995738521
21 0.912710315738701
22 0.899942580232778
23 0.939249019976914
24 0.924968212947532
};
\addplot [only marks, mark=x,  draw=color8, fill=color8]%colormap/viridis]
table{%
x                      y
0 0.913904705804634
1 0.942862936135759
2 0.885602713331904
3 0.912418769623853
4 0.946823595071608
5 0.944376270812879
6 0.9233222927603
7 0.937945024148549
8 0.903126285697861
9 0.955215166890433
10 0.926566926443046
11 0.956342413664972
12 0.941111252264361
13 0.905023155329662
14 0.947013986704781
15 0.919674204134037
16 0.909175268516757
17 0.88928813452281
18 0.898557444768282
19 0.923928235861213
20 0.879629995738521
21 0.912628813330699
22 0.900323356965456
23 0.922111674536945
24 1.08256485609128
};
\end{axis}

\end{tikzpicture}}\label{fig:knowledge_crea_15000}}
    \subfigure[$t=25000$]{\resizebox{7cm}{4cm}{% This file was created by tikzplotlib v0.8.2.
\begin{tikzpicture}

\definecolor{color0}{rgb}{0.12156862745098,0.466666666666667,0.705882352941177}
\definecolor{color1}{rgb}{1,0.498039215686275,0.0549019607843137}
\definecolor{color2}{rgb}{0.172549019607843,0.627450980392157,0.172549019607843}
\definecolor{color3}{rgb}{0.83921568627451,0.152941176470588,0.156862745098039}
\definecolor{color4}{rgb}{0.580392156862745,0.403921568627451,0.741176470588235}
\definecolor{color5}{rgb}{0.549019607843137,0.337254901960784,0.294117647058824}
\definecolor{color6}{rgb}{0.890196078431372,0.466666666666667,0.76078431372549}
\definecolor{color7}{rgb}{0.737254901960784,0.741176470588235,0.133333333333333}
\definecolor{color8}{rgb}{0.0901960784313725,0.745098039215686,0.811764705882353}

\begin{axis}[
tick align=outside,
tick pos=both,
x grid style={white!69.01960784313725!black},
xmin=-1.28841079764563, xmax=25.2884107976456,
xtick style={color=black},
y grid style={white!69.01960784313725!black},
ymin=-0.1, ymax=1.1,
ytick style={color=black}
]
\addplot [only marks, mark=x,  draw=color0, fill=color0]%colormap/viridis]
table{%
x                      y
0 1.00260404762773
1 0.996354343894981
2 0.993971791315378
3 0.98897211800504
4 1.00285996166489
5 0.988695839802607
6 1.24280055759885
7 0.997741167139969
8 0.982567934694853
9 1.0153077963947
10 0.969901020693593
11 1.01964158143743
12 0.957713783810821
13 0.965623286574657
14 0.818982150993833
15 1.00856600159819
16 0.979079254407037
17 1.02105153443835
18 1.00289798082872
19 1.00820088494936
20 0.998021934857392
21 0.990049782981175
22 0.990322464456794
23 1.02138339490623
24 1.01268498688276
};
\addplot [only marks, mark=x,  draw=color1, fill=color1]%colormap/viridis]
table{%
x                      y
0 1.00418466444698
1 0.996637694226348
2 0.993971791183943
3 0.988798313020539
4 1.00283771933507
5 0.988695839802607
6 0.860354760249201
7 0.997741167139969
8 0.982172496386135
9 1.0153077963947
10 0.969901020693593
11 1.01983363466179
12 0.956369204080737
13 0.965623286574657
14 0.965590217707473
15 1.00847817379626
16 0.979079254407037
17 1.02105240799058
18 1.00289797579604
19 1.00980457099438
20 0.9751198252291
21 0.978037099814052
22 0.990385550135604
23 1.00857658236793
24 1.01268498723365
};
\addplot [only marks, mark=x,  draw=color2, fill=color2]%colormap/viridis]
table{%
x                      y
0 1.00475867907547
1 0.996264893820757
2 0.993971791183943
3 0.988798313020539
4 1.0028204197452
5 0.988603295404418
6 1.02203579889208
7 0.997741167139969
8 0.982567934694853
9 1.01600476137462
10 0.969901020693593
11 1.01983363466179
12 0.955024624350653
13 0.988603234426578
14 0.916720862017544
15 1.00851570294575
16 0.979254386746004
17 1.02105122505526
18 1.00289797579604
19 1.00679071660887
20 0.990387898314628
21 0.978037099814052
22 0.990511721493226
23 1.02138339490623
24 1.0126849882863
};
\addplot [only marks, mark=x,  draw=color3, fill=color3]%colormap/viridis]
table{%
x                      y
0 1.0060305908192
1 1.02535816379947
2 0.993971791242359
3 0.990737588636034
4 1.00006447317911
5 0.988664991669877
6 1.0111899828007
7 0.997741167139969
8 0.982567934694853
9 1.99789193794851
10 0.969620769561413
11 1.2895311820563
12 0.958639237223369
13 0.965623286574657
14 0.9655902175294
15 1.00861630025062
16 0.979254386746004
17 1.02105113406024
18 1.00289795933378
19 1.0089059691196
20 0.986570880043246
21 0.98524459931319
22 0.990217473396205
23 1.01556732019658
24 1.0126849882863
};
\addplot [only marks, mark=x,  draw=color4, fill=color4]%colormap/viridis]
table{%
x                      y
0 1.00368590313213
1 0.995892093415167
2 0.993971791271567
3 1.08308134953655
4 1.00282635103315
5 0.988667799101553
6 1.01591315907298
7 0.997741167139969
8 0.981777058077416
9 1.0134492231149
10 0.969714186605473
11 1.01929763375773
12 0.956779891613357
13 0.988603525884063
14 0.965590217173255
15 1.00850032558658
16 0.979275374199812
17 1.02105240799058
18 1.00289797579604
19 1.01152931657049
20 0.951608992630739
21 0.999660150317145
22 0.990322464456794
23 1.01556732019658
24 1.01268498688276
};
\addplot [only marks, mark=x,  draw=color5, fill=color5]%colormap/viridis]
table{%
x                      y
0 1.00476191044336
1 0.996264893820757
2 0.993971791340355
3 0.990737588636034
4 1.00284019070505
5 0.988646054442547
6 1.029033100176
7 0.997741167139969
8 0.981974777231776
9 1.01461083141477
10 0.968406347988631
11 1.01983363466179
12 0.992701342736444
13 0.942643371106902
14 0.965590217351328
15 1.00851570294575
16 0.979275374199812
17 1.02105122505526
18 1.00289797579604
19 1.01031613746009
20 0.986570880043246
21 0.999660150317145
22 0.990322464456794
23 1.16658284135672
24 1.01268498898807
};
\addplot [only marks, mark=x,  draw=color6, fill=color6]%colormap/viridis]
table{%
x                      y
0 1.00348353596386
1 0.996451294023552
2 0.993971791183943
3 0.989013213836538
4 1.00281547700524
5 0.988695839802607
6 1.0198491428387
7 0.997741167139969
8 0.982567934694853
9 1.01623708303459
10 0.969901020693593
11 1.01983363466179
12 0.956163860314427
13 0.988603202042413
14 0.965590217707473
15 1.00856600159819
16 0.978904122068071
17 1.02105159813486
18 1.00289795694545
19 1.01031613746009
20 0.955763852913878
21 0.999660150317145
22 0.990511029583318
23 1.01585621590255
24 1.01268498688276
};
\addplot [only marks, mark=x,  draw=white!49.80392156862745!black, fill=white!49.80392156862745!black]%colormap/viridis]
table{%
x                      y
0 1.00453522868854
1 0.996451294023552
2 0.99397179135919
3 0.988798313020539
4 1.00280064878535
5 0.988695839802607
6 1.0227355272231
7 0.997741167139969
8 0.982567934694853
9 1.02514055976549
10 0.969901020693593
11 1.01887336853998
12 0.956428838160816
13 0.98094324641191
14 0.965590217351328
15 1.00861630025062
16 0.979264880472908
17 1.02105240799058
18 0.984305564086486
19 1.01243138997081
20 0.924888852482887
21 0.970829416313021
22 0.990609342918961
23 1.00653523213102
24 1.01268498688276
};
\addplot [only marks, mark=x,  draw=color7, fill=color7]%colormap/viridis]
table{%
x                      y
0 1.00453522868854
1 0.995892093415167
2 0.993971791280889
3 0.990737588636034
4 1.00281844264922
5 0.988646297869175
6 1.03153150680567
7 0.997741167139969
8 0.982172496386135
9 1.0153077963947
10 0.968281791929884
11 1.01983363466179
12 0.955024624350653
13 0.988603291098867
14 0.965590217440364
15 1.00851570294575
16 0.979254386746004
17 1.02105159813486
18 1.00289797579604
19 1.00608563243863
20 1.41304210202295
21 0.980439538313134
22 0.990706964344696
23 1.01941903866959
24 1.01268498968984
};
\addplot [only marks, mark=x,  draw=color8, fill=color8]%colormap/viridis]
table{%
x                      y
0 1.00396706225321
1 0.996264893820757
2 0.993971791183943
3 0.990491519579135
4 1.00280064878535
5 0.988603295404418
6 1.0227355272231
7 0.997741167139969
8 0.982567934694853
9 1.01716636967449
10 0.969901020693593
11 1.01983363466179
12 0.955024624350653
13 0.942643371106902
14 0.965590217707473
15 1.00861630025062
16 1.01128949914603
17 1.02105240799058
18 1.00289794326038
19 1.01243138997081
20 0.9751198252291
21 0.98524459931319
22 0.99041709297501
23 1.02138339490623
24 1.01268498688276
};
\end{axis}

\end{tikzpicture}}\label{fig:knowledge_crea_25000}}
    \caption{\label{fig:knowledge_crea}\textbf{Test 3.} Knowledge-like functions for every individuals in the population at different given times.}
\end{figure}

    \subsection{Comparison with a language model}

Cucker, Smale and Zhou developed a model to describe the evolution of language \cite{CuckerSmale2004}. Our work is stongly inspired by their work. For our purpose, we needed to substantially modify this model by introducing the credibility matrix and individual learning. However, interpretation of the variables of the model is different: in their model a language-like function is a function from a space of objects to a space of signals. As in our case, they proved the convergence of the languages of different individuals to a common shared language, although under different hypotheses (see also Remark \ref{rem:errorCuckerSmale}). 

In their model, influences between individuals do not vary through time. In reality, we expect  influences between individuals to be dynamic, and that is why we introduced the credibility matrix which changes at each time step and modifies the interactions within the population. 

\medskip
\paragraph{Test 4. On the evolution of language.}
We modified our numerical method in order to simulate the model of language evolution developed in \cite{CuckerSmale2004}. We consider two different linguistic communities of two individuals with few interactions.  We take $\mathcal{E} = \{1,...,5\}$ and $\mathcal{C} = [-10,10]$. The individuals of the first and the second communities have the language-like function $k_1$ and $k_2$, respectively. Where $k_1$ and $k_2$ correspond to two different languages. This language-like function is defined as
\begin{equation*}
\forall e \in \mathcal{E}, k_1(e) = 5,
\end{equation*}
and 
\begin{equation*}
\forall e \in \mathcal{E}, k_2(e) = 7.
\end{equation*}
We take 
\begin{equation*}
\Gamma = \begin{pmatrix}1&1&0.01&0.01\\
1&1&0.01&0.01\\
0.01&0.01&1&1\\
0.01&0.01&1&1\\
\end{pmatrix},
\end{equation*}
so the two linguistic communities hardly interact. Numerical simulations show that the two communities converge to a common shared language: Figure \ref{fig:language} shows that the  distance between the process $k^t$ and the set $\mathcal{F}$ tends to 0.

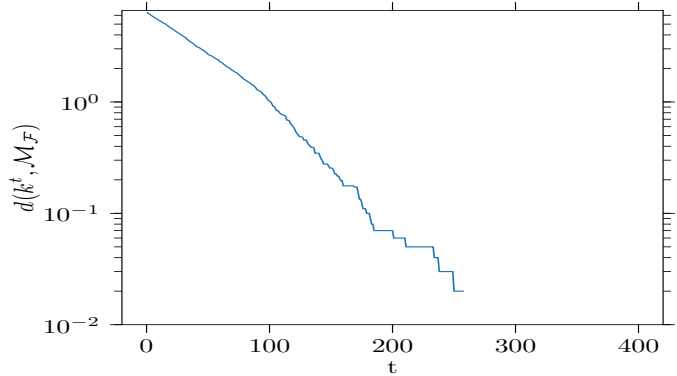
\begin{figure}[ht]
    \centering
    \resizebox{9cm}{5cm}{% This file was created by tikzplotlib v0.8.2.
\begin{tikzpicture}

\definecolor{color0}{rgb}{0.12156862745098,0.466666666666667,0.705882352941177}

\begin{axis}[
tick align=outside,
tick pos=both,
x grid style={white!69.01960784313725!black},
xlabel={t},
xmin=-20, xmax=420,
xtick style={color=black},
y grid style={white!69.01960784313725!black},
ylabel={\(\displaystyle d(k^t,\MM_\mathcal{F})\)},
ymin=0.01, ymax=6.6407830863536,
ytick style={color=black},
ymode=log
]
\addplot [semithick, color0]
table {%
0 6.32455532033676
1 6.32455532033676
2 6.20375454714398
3 6.08242820190103
4 5.97231087767404
5 5.87023933024534
6 5.7621106522265
7 5.67823675130765
8 5.60459140405618
9 5.48903884194479
10 5.41346520987399
11 5.3537991500915
12 5.26556715859185
13 5.18106545652111
14 5.09571197750641
15 5.04071296492134
16 4.95413262155756
17 4.85710610123396
18 4.76589089480607
19 4.67841273445062
20 4.60489559124654
21 4.52588502908875
22 4.46144089527774
23 4.36656402012854
24 4.2921897152269
25 4.2174653271108
26 4.1503319893489
27 4.08196497627836
28 4.00381475606152
29 3.94961162669553
30 3.88283418619717
31 3.82248637339202
32 3.76866718527926
33 3.64576575589417
34 3.58207681277976
35 3.5064896059067
36 3.44785094340639
37 3.39486442323938
38 3.34257897295728
39 3.25817151387658
40 3.19206681306501
41 3.12945478094489
42 3.09565548278742
43 3.068466865655
44 3.00473804786331
45 2.95639152352361
46 2.91749715846042
47 2.87560317758665
48 2.79428068897611
49 2.75307885050631
50 2.68313698370123
51 2.63678731888001
52 2.61813085826003
53 2.585430540055
54 2.54277700094573
55 2.50486920857848
56 2.45907933349245
57 2.42398167877811
58 2.39992355086887
59 2.33851005317562
60 2.28725857569961
61 2.26647853904626
62 2.22282110510826
63 2.16696573921061
64 2.13698422615967
65 2.07581851614624
66 2.0528648377488
67 2.02789303664005
68 1.98673785245665
69 1.93798719668111
70 1.91741145553363
71 1.8864801401327
72 1.85589184403161
73 1.82870658773542
74 1.80592741906013
75 1.75924344382732
76 1.72228705018902
77 1.69029048485142
78 1.64760530312114
79 1.60575485650709
80 1.57983126503462
81 1.55890108931817
82 1.52743844561875
83 1.50459991224792
84 1.48045777662419
85 1.44917629275885
86 1.42112995270131
87 1.4124125280224
88 1.37149406713871
89 1.33422000103558
90 1.27889899275121
91 1.27304112837494
92 1.25207736520317
93 1.23676477509838
94 1.2035060592478
95 1.15873874341449
96 1.15045447216702
97 1.11330670574584
98 1.06770192642268
99 1.03805468822679
100 1.01487631577483
101 0.998519570870912
102 0.956158891095914
103 0.913798211320916
104 0.905579667169649
105 0.864934716147189
106 0.833618191171315
107 0.833618191171315
108 0.805333919923853
109 0.781191784300122
110 0.772973240148855
111 0.765652732073167
112 0.755234243172871
113 0.748731427774142
114 0.687414902798268
115 0.68505422302327
116 0.680912087399539
117 0.640912087399539
118 0.623591579323851
119 0.603591579323851
120 0.583591579323851
121 0.552275054347977
122 0.523931418404981
123 0.513931418404981
124 0.491570738629983
125 0.487428603006252
126 0.487428603006252
127 0.477428603006252
128 0.455067923231254
129 0.455067923231254
130 0.455067923231254
131 0.424886763397173
132 0.41246035652598
133 0.400997712826561
134 0.390997712826561
135 0.390997712826561
136 0.38685557720283
137 0.34685557720283
138 0.34685557720283
139 0.34685557720283
140 0.34685557720283
141 0.32685557720283
142 0.30685557720283
143 0.29685557720283
144 0.27685557720283
145 0.27685557720283
146 0.27685557720283
147 0.27685557720283
148 0.26685557720283
149 0.254721359549996
150 0.254721359549996
151 0.254721359549996
152 0.244721359549996
153 0.226502815398729
154 0.226502815398729
155 0.216502815398729
156 0.216502815398729
157 0.206502815398729
158 0.196502815398729
159 0.196502815398729
160 0.176502815398729
161 0.176502815398729
162 0.176502815398729
163 0.176502815398729
164 0.176502815398729
165 0.176502815398729
166 0.176502815398729
167 0.176502815398729
168 0.176502815398729
169 0.172360679774998
170 0.172360679774998
171 0.172360679774998
172 0.154142135623731
173 0.134142135623731
174 0.134142135623731
175 0.124142135623731
176 0.11
177 0.11
178 0.11
179 0.1
180 0.1
181 0.1
182 0.09
183 0.08
184 0.08
185 0.07
186 0.07
187 0.07
188 0.07
189 0.07
190 0.07
191 0.07
192 0.07
193 0.07
194 0.07
195 0.07
196 0.07
197 0.07
198 0.07
199 0.07
200 0.07
201 0.06
202 0.06
203 0.06
204 0.06
205 0.06
206 0.06
207 0.06
208 0.06
209 0.06
210 0.06
211 0.05
212 0.05
213 0.05
214 0.05
215 0.05
216 0.05
217 0.05
218 0.05
219 0.05
220 0.05
221 0.05
222 0.05
223 0.05
224 0.05
225 0.05
226 0.05
227 0.05
228 0.05
229 0.05
230 0.05
231 0.05
232 0.05
233 0.05
234 0.04
235 0.04
236 0.04
237 0.04
238 0.03
239 0.03
240 0.03
241 0.03
242 0.03
243 0.03
244 0.03
245 0.03
246 0.03
247 0.03
248 0.03
249 0.03
250 0.02
251 0.02
252 0.02
253 0.02
254 0.02
255 0.02
256 0.02
257 0.02
258 0.02
259 0
260 0
261 0
262 0
263 0
264 0
265 0
266 0
267 0
268 0
269 0
270 0
271 0
272 0
273 0
274 0
275 0
276 0
277 0
278 0
279 0
280 0
281 0
282 0
283 0
284 0
285 0
286 0
287 0
288 0
289 0
290 0
291 0
292 0
293 0
294 0
295 0
296 0
297 0
298 0
299 0
300 0
301 0
302 0
303 0
304 0
305 0
306 0
307 0
308 0
309 0
310 0
311 0
312 0
313 0
314 0
315 0
316 0
317 0
318 0
319 0
320 0
321 0
322 0
323 0
324 0
325 0
326 0
327 0
328 0
329 0
330 0
331 0
332 0
333 0
334 0
335 0
336 0
337 0
338 0
339 0
340 0
341 0
342 0
343 0
344 0
345 0
346 0
347 0
348 0
349 0
350 0
351 0
352 0
353 0
354 0
355 0
356 0
357 0
358 0
359 0
360 0
361 0
362 0
363 0
364 0
365 0
366 0
367 0
368 0
369 0
370 0
371 0
372 0
373 0
374 0
375 0
376 0
377 0
378 0
379 0
380 0
381 0
382 0
383 0
384 0
385 0
386 0
387 0
388 0
389 0
390 0
391 0
392 0
393 0
394 0
395 0
396 0
397 0
398 0
399 0
400 0
};
\end{axis}

\end{tikzpicture}}
    \caption{\textbf{Test 4.} Evolution of the distance between $k^t$ and the set $\mathcal{F}$ through time.}
    \label{fig:language}
\end{figure}

\section{Conclusion}

The aim of this work was to develop a more general mathematical model of knowledge evolution than the existing ones e.g. \cite{Henrich2014, Powell2009}. Existing models have been widely used to investigate the impact of population size on the evolution of knowledge. However, they rely on strong assumptions and omit important aspects of social dynamics. Here, we developed a hybrid model, between an individual based stochastic model and a learning algorithm, that relaxes hypotheses and incorporates various forms of social interaction dynamics.

Analytical results show that interacting individuals converge with high probability towards a common shared knowledge, when no innovation occurs (i.e. no individual learning). Numerical simulations show that these results hold when individuals combine individual and social learning and that conceptualizations that appropriately reflect the structure of the world emerge across time. 
This model can be used to investigate knowledge evolution in hierarchically or spatially structured populations of variable sizes.   

%To study this question we developed a simulation method. Results show that demography has an impact on knowledge evolution. A bigger population can maintain a higher level of knowledge. However there is an interaction between the size of the population and the number of competences. When the number of competences is low, the diffusion of knowledge is faster in bigger population, whereas the diffusion of knowledge is slower in bigger population when the number on competence is high. Results also show that taking population spatialization into account accelerates this diffusion. The effect of the demography on the population knowledge is then more nuanced that in \cite{Henrich2014, Powell2009}.

\bibliographystyle{acm}
\bibliography{refs}

\end{document}